\DeclareSymbolFontAlphabet{\mathbb}{AMSb}
\DeclareSymbolFontAlphabet{\mathbbl}{bbold}
\DeclareSymbolFont{fouriersymbols}{FMS}{futm}{m}{n}
\DeclareSymbolFont{fourierlargesymbols}{FMX}{futm}{m}{n}
\DeclareMathDelimiter{\VERT}{\mathord}{fouriersymbols}{152}{fourierlargesymbols}{147}
\newtheorem{theorem}{Theorem}[section]
\newtheorem{corollary}[theorem]{Corollary}
\newtheorem{lemma}[theorem]{Lemma}
\newtheorem{proposition}[theorem]{Proposition}
\theoremstyle{definition}
\theoremstyle{remark}
\newtheorem{remark}[theorem]{Remark}
\newcommand{\N}{\mathbb{N}}
\newcommand{\C}{{\mathbb C}}
\newcommand{\R}{\mathbb{R}}
\newcommand{\Rm}{\R_+}
\newcommand{\Rn}{\R^n}
\newcommand{\Rnm}{\R^{n}_{+}}
\DeclareMathOperator*{\ceroinf}{(0,\infty)^{n}}
\newcommand{\supp}{\mathop{\mathrm{supp}}}
\newcommand{\esssup}{\mathop{\mathrm{ess\, sup \;}}}
\newcommand{\eps}{\varepsilon}
\newcommand{\HH}{H}
\newcommand{\nHH}{H}%{\mathbbl{H}}
\newcommand{\nBH}{\Delta_{\mu}}
\newcommand{\HZ}{h}
\newcommand{\nHZ}{h}%{\mathbbl{h}}
\newcommand{\nBZ}{S_{\mu}}
\newcommand{\DZ}{\textbf{D}}
\newcommand{\DHI}{\boldsymbol{\mathfrak{D}}}
\newcommand{\convZ}{\sharp}
\newcommand{\convH}{\#}
\let\Re\undefined
\DeclareMathOperator{\Re}{\mathrm{Re}}
\numberwithin{equation}{section}
\begin{document}

%\footnotetext{Last modification: \today.}

\title[]{$n$-dimensional Fractional Bessel Operators and Liouville theorems}

\author[V. Galli]{Vanesa Galli}
\author[S. Molina]{Sandra Molina}
\author[A. Quintero]{Alejandro Quintero}
\address{Vanesa Galli,
Sandra Molina,
Alejandro Quintero\newline
Departamento de Matem\'atica, Facultad de Ciencias Exactas y Naturales,\newline
Universidad Nacional de Mar del Plata, \newline
Funes 3350, 7600 Mar del Plata,
Argentina.}
\email{vggalli@mdp.edu.ar;
smolina@mdp.edu.ar;
aquinter@mdp.edu.ar
}

%\thanks{The authors are partially supported by MTM2016-79436-P}

\subjclass[2010]{ Primary 47A60, 46F10, 46F12, 46F30}
\keywords{Liouville theorem; Bessel Operator; Hankel transform.}

\date{}

%%% ----------------------------------------------------------------------

\begin{abstract}
In this paper we extend the results given in \cite{Mo18} to the $n$-dimensional case the fractional powers of Bessel operators. Moreover, we established  a Liouville type theorems for these operators. This extend the result obtained in \cite{GMQ18} for Bessel operators.

% we study some Liouville theorems for fractional Bessel operators. We consider the distributional point of view of fractional Bessel operators given in \cite{Mo18}.
\end{abstract}

%%% ----------------------------------------------------------------------
\maketitle
%%% ----------------------------------------------------------------------
\section{Introduction}
Bessel operators appear in the  setting of harmonic analysis related with Hankel transformations. In the one dimensional case, Bessel operators  appear when we consider the Laplacian operator in polar coordinates. In this work we study the fractional Bessel operator and Liouville theorems of  the $n$-dimensional versions in $\Rnm=\ceroinf$  given by
\begin{equation}\label{nOBZ}
S_{\mu}=
\sum_{i=1}^{n}\frac{\partial^{2}}{\partial x_{i}^{2}}+
\frac{4\mu_{i}^{2}-1}{4x_{i}^{2}}
\end{equation}
and
\begin{equation}\label{nOBH}
\nBH=
\sum_{i=1}^{n}
\frac{\partial^{2}}{\partial x_{i}^{2}}+
(2\mu_{i}+1)(x_{i}^{-1}\frac{\partial}{\partial x_{i}})
\end{equation}
where $\mu\in\Rn$, $\mu=(\mu_{1},\dots,\mu_{n})$ and $\mu_{i}>-\frac{1}{2}$.

In \cite{Mo18},  were studied the fractional powers of the one dimensional  case of \eqref{nOBZ} and \eqref{nOBH} in the sense of the classical theory of fractional powers developed by Balakrishnan in \cite{Ba60} and  using similarity of both operators. Let $X$ and $Y$ Banach spaces. Two linear operators $A$ and $B$, $A:D(A)\subset X\to X$ and $B:D(B)\subset Y\to Y$ are {\it similar} if there exists an  isomorphism $T:X\to Y$ with inverse  $T^{-1}:Y\to X$  such that  $D(B)=\{x\in Y: T^{-1}x\in D(A)\}$ given by
\begin{equation}\label{OpSimilares}
B=TAT^{-1}.
\end{equation}
Similar operators have the same spectral properties and also that of being non-negative if one of them has this property. Thus,  their powers are similar operators and verifies the same similarity relation, so $$ B^{\alpha}=TA^{\alpha}T^{-1}.$$
In this work, we generalize the results obtained in \cite{Mo18} to the $n$-dimensional case obtaining the fractional powers of Bessel operator \eqref{nOBZ} and \eqref{nOBH} in weighted Lebesgue spaces and in distributional spaces. As in \cite{Mo18} we first study the non-negativity  of Bessel operator \eqref{nOBZ} in suitable weighted Lebesgue spaces and by similarity we obtain the non-negativity of \eqref{nOBH} in the corresponding Lebesgue space.
Analogously to the one-dimensional case, we construct a locally convex space $\mathcal{B}$ in which $-S_{\mu}$ is continuous and non-negative. Next, we can consider the dual space $\mathcal{B'}$ with the strong topology and thus obtaining non-negativity of $-S_{\mu}$ in this distributional space. $\mathcal{B'}$ is contained in the distributional Zemanian space and  contain the   weighted Lebesgue spaces in which non-negativity  was studied. Consequently, if we denote with $(S_{\mu})_{\mathcal{B'}}$ the Bessel operator with domain  $\mathcal{B'}$,  we can consider the powers $(-(S_{\mu})_{\mathcal{B'}})^{\alpha}$ with $\Re(\alpha)>0$ and it is verified the following relation inherited from the selfadjuncture of $S_{\mu}$
$$((-S_{\mu})^{\alpha}u,\phi)=(u,(-S_{\mu})^{\alpha}\phi),$$
for $\phi\in\mathcal{B}$ and $u\in\mathcal{B'}$.

In \cite{GMQ18},  a Liouville-type theorem was studied for a certain general class of Bessel-type operators. This class of operators contain as a particular case the Bessel operator \eqref{nOBZ}, and the Liouville theorem applied to this operator states that if $u$ is a Zemanian distribution that verifies that $S_{\mu}u=0$ then $u$ is a polynomial. This result is analogous to that existing for the Laplacian operator and temperate distributions that states that any harmonic tempered distribution  is a polynomial. In \cite{BKN02}, \cite{Li06}, \cite{ZCCY14} and \cite{CDAL14} different version of Liouville theorem for the fractional Laplacian are studied.  In this paper we prove the following Liouville theorems for the distributional  fractional Bessel operators:

\begin{theorem}\label{Teorema5.1}
Let $u\in\mathcal{B'}$ and $\alpha\in\C$ with $\Re\alpha>0$. If $(-(S_{\mu})_{\mathcal{B'}})^{\alpha}u=0$ then there exists a polynomial $p$ such that $u=x^{\mu+\frac{1}{2}}p(\|x\|^{2})$.
\end{theorem}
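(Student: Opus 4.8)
The plan is to reduce the $n$-dimensional fractional statement to what is already known in two stages: first, transfer the problem from $S_\mu$ to $\Delta_\mu$ via the similarity \eqref{OpSimilares}, and second, reduce the fractional equation $(-(S_\mu)_{\mathcal B'})^\alpha u = 0$ to the non-fractional one $(-S_\mu) u = 0$, to which the Liouville theorem from \cite{GMQ18} applies. For the first stage, recall that $S_\mu$ and $\Delta_\mu$ are similar through multiplication by the weight $x^{\mu+\frac12} = \prod_i x_i^{\mu_i+1/2}$; under this isomorphism a solution of the $\Delta_\mu$-equation corresponds to a solution of the $S_\mu$-equation multiplied by $x^{\mu+1/2}$, which already explains the shape of the answer in the theorem. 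So it suffices to show: if $v$ lies in the appropriate distributional space and $(-\Delta_\mu)^\alpha v = 0$, then $v$ is a polynomial in $\|x\|^2$ — equivalently, $v$ is a function that is radial in a suitable sense and polynomial.

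For the second stage, the key idea is that the fractional power, defined à la Balakrishnan on a non-negative operator, is injective on the relevant space, or more precisely that its kernel coincides with the kernel of the operator itself. The cleanest route is spectral/Hankel-transform-theoretic: on the Zemanian-type space the operator $-\Delta_\mu$ (or $-S_\mu$) is diagonalized by the $n$-dimensional Hankel transform into multiplication by $\|y\|^2$ (a product of one-dimensional Hankel transforms, each turning the one-dimensional Bessel operator into multiplication by $y_i^2$), and the fractional power becomes multiplication by $(\|y\|^2)^\alpha = \|y\|^{2\alpha}$. Since $\Re\alpha>0$, the symbol $\|y\|^{2\alpha}$ vanishes on $\{\|y\|=0\}$ and is nonzero elsewhere, exactly as $\|y\|^2$ does; hence a distribution killed by multiplication by $\|y\|^{2\alpha}$ is supported at the origin in the transformed variable, just as one killed by $\|y\|^2$. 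Therefore $(-(S_\mu)_{\mathcal B'})^\alpha u = 0$ forces the Hankel transform of $u$ (after the similarity twist) to be supported at $0$, which is the same conclusion one gets from $S_\mu u = 0$; invoking the Liouville theorem of \cite{GMQ18} — or re-running its proof, which identifies distributions supported at the origin on the Hankel side with polynomials on the space side — yields that the twisted $u$ is a polynomial in $\|x\|^2$.

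Concretely, the steps I would carry out are: (1) state the similarity $\Delta_\mu = T S_\mu T^{-1}$ with $T$ multiplication by $x^{\mu+1/2}$, and note the induced identity $(-(\Delta_\mu)_{\mathcal B'})^\alpha = T (-(S_\mu)_{\mathcal B'})^\alpha T^{-1}$ on the dual spaces, so that $(-(S_\mu)_{\mathcal B'})^\alpha u = 0 \iff (-(\Delta_\mu)_{\mathcal B'})^\alpha (T^{-1}u) = 0$; (2) transport the equation to the Hankel side, where $-\Delta_\mu$ becomes multiplication by $\|y\|^2$ and its $\alpha$-power becomes multiplication by $\|y\|^{2\alpha}$ — here I would use the representation of the Balakrishnan fractional power established earlier in the paper and the fact that the multi-dimensional Hankel transform is a composition of one-dimensional ones; (3) show that a distribution $\widehat w$ with $\|y\|^{2\alpha}\widehat w = 0$ is supported at the origin, by localizing away from $0$ where $\|y\|^{2\alpha}$ is smooth and nonvanishing hence invertible; (4) conclude, exactly as in \cite{GMQ18}, that $\widehat w$ being supported at $0$ means $w$ is a polynomial, and in fact — because $\widehat w$ is a radial combination of derivatives of $\delta$ forced by the structure of the Hankel setting — a polynomial in $\|x\|^2$; (5) untwist: $u = T w = x^{\mu+1/2} p(\|x\|^2)$.

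The main obstacle I anticipate is step (3)–(4) made rigorous in the distributional Hankel framework: one must be careful that "multiplication by $\|y\|^{2\alpha}$" is a well-defined continuous operation on the relevant test/distribution spaces when $\alpha$ is complex and $2\Re\alpha$ is not an even integer (so $\|y\|^{2\alpha}$ is only Hölder, not smooth, at the origin), and that the localization argument away from $0$ genuinely produces support in $\{0\}$ rather than merely "small" support. A secondary subtlety is pinning down that the resulting distribution supported at the origin, when pulled back through the Hankel transform, gives a polynomial \emph{in $\|x\|^2$} specifically — this should follow from the parity/radiality built into the Bessel–Hankel correspondence (each one-dimensional Hankel transform of order $\mu_i$ pairs $\delta^{(k)}$ with even polynomials of degree $2k$), but it requires the explicit identification of the Hankel transforms of derivatives of $\delta$, which is presumably recorded earlier in the paper or in \cite{Mo18, GMQ18} and which I would cite rather than recompute.
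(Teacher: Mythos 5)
Your plan is essentially the paper's own proof: write $(-(S_{\mu})_{\mathcal{B}})^{\alpha}\phi$ via the Balakrishnan formula and identify it with $h_{\mu}\|y\|^{2\alpha}h_{\mu}\phi$, then test $h_{\mu}u$ against functions supported away from the origin by dividing by $\|y\|^{2\alpha}$ (your localization step is exactly the paper's Lemma \ref{lemmamultpot}, which shows $\|x\|^{-2\alpha}\psi\in\mathcal{H}_{\mu}$ for such $\psi$), and finally invoke the Liouville theorem of \cite{GMQ18} to get $h_{\mu}u=\sum c_{k}S_{\mu}^{k}\delta_{\mu}$ and hence $u=x^{\mu+\frac{1}{2}}p(\|x\|^{2})$. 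The only deviation is your opening similarity detour through $\Delta_{\mu}$, which is unnecessary (and slightly backwards: the paper proves this theorem directly for $S_{\mu}$ and uses the similarity twist only afterwards to deduce the $\Delta_{\mu}$ version), but it does not change the core argument.
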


For  the study of the powers of Bessel operator given by \eqref{nOBH} we introduce a locally convex space $\mathcal{F}$. This space verifies that its dual space $\mathcal{F'}$ with the strong topology is a suitable distributional space for the study of fractional powers $(-(\Delta_{\mu})_{\mathcal{F'}})^{\alpha}$ and from similarity we conclude the following result

\begin{theorem} \label{teorema 8.2}
Let $u\in\mathcal{F'}$ and $\alpha\in\C$ with $\Re\alpha>0$. If $(-(\Delta_{\mu})_{\mathcal{F'}})^{\alpha}u=0$ then there exists a polynomial $p$ such that $u=x^{2\mu+1}p(\|x\|^{2})$.
\end{theorem}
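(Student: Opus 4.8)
The plan is to derive the statement from Theorem~\ref{Teorema5.1} by transporting it along the similarity between $\Delta_\mu$ and $S_\mu$ --- the very similarity that was used to carry the non-negativity of $-S_\mu$ on $\mathcal{B}$ over to $-\Delta_\mu$ on $\mathcal{F}$ and to set up $\mathcal{F}'$. Recall that (as in the one-dimensional case of \cite{Mo18}) this similarity is implemented by the multiplication operator $\sigma$ attached to the power $x^{\mu+\frac{1}{2}}:=\prod_{i=1}^{n}x_i^{\mu_i+\frac{1}{2}}$, which maps $\mathcal{F}$ isomorphically onto $\mathcal{B}$ and satisfies $S_\mu=\sigma\,\Delta_\mu\,\sigma^{-1}$.

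First I would observe that, since similar non-negative operators have similar Balakrishnan powers obeying the same relation (the fact recalled in the Introduction), one has $(-\Delta_\mu)^{\alpha}=\sigma^{-1}(-S_\mu)^{\alpha}\sigma$ on $\mathcal{F}$, and, passing to the strong duals by transposition (using for $S_\mu$ the identity $((-S_\mu)^{\alpha}u,\phi)=(u,(-S_\mu)^{\alpha}\phi)$ from the Introduction), the transpose $\Sigma:=\sigma^{t}$ is a topological isomorphism $\mathcal{B}'\to\mathcal{F}'$ with
\[
\bigl(-(\Delta_\mu)_{\mathcal{F}'}\bigr)^{\alpha}=\Sigma\,\bigl(-(S_\mu)_{\mathcal{B}'}\bigr)^{\alpha}\,\Sigma^{-1},\qquad \Re\alpha>0.
\]

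Now suppose $u\in\mathcal{F}'$ satisfies $\bigl(-(\Delta_\mu)_{\mathcal{F}'}\bigr)^{\alpha}u=0$. Then $\Sigma^{-1}u\in\mathcal{B}'$ and $\bigl(-(S_\mu)_{\mathcal{B}'}\bigr)^{\alpha}\bigl(\Sigma^{-1}u\bigr)=0$, so Theorem~\ref{Teorema5.1} provides a polynomial $p$ with $\Sigma^{-1}u=x^{\mu+\frac{1}{2}}\,p(\|x\|^{2})$; the right-hand side is a regular distribution, since $\mu_i>-\frac{1}{2}$ makes $x^{\mu+\frac{1}{2}}$ locally integrable on $\Rnm$. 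It remains to identify $u=\Sigma\bigl(\Sigma^{-1}u\bigr)$: for a regular distribution $w$ and $\phi\in\mathcal{F}$ one has $(\sigma^{t}w,\phi)=(w,\sigma\phi)=(w,x^{\mu+\frac{1}{2}}\phi)=(x^{\mu+\frac{1}{2}}w,\phi)$, so $\Sigma=\sigma^{t}$ acts on regular distributions simply as multiplication by $x^{\mu+\frac{1}{2}}$. Hence
\[
u=x^{\mu+\frac{1}{2}}\cdot x^{\mu+\frac{1}{2}}\,p(\|x\|^{2})=x^{2\mu+1}\,p(\|x\|^{2}),
\]
with the very polynomial furnished by Theorem~\ref{Teorema5.1}, which is the claim.

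I expect the substantive content to lie entirely in the facts borrowed from the construction of $\mathcal{F}$, $\mathcal{F}'$ (carried out before this statement): that $\sigma$ is a topological isomorphism of test spaces realizing $S_\mu=\sigma\Delta_\mu\sigma^{-1}$, that this relation is inherited by the Balakrishnan powers and then by their transposes on the duals, and that $\sigma^{t}$ reduces on the relevant regular distributions to multiplication by $x^{\mu+\frac{1}{2}}$. Granting that apparatus, the argument above is a short formal consequence of Theorem~\ref{Teorema5.1}; the only bookkeeping is the exponent count $\bigl(\mu+\tfrac{1}{2}\bigr)+\bigl(\mu+\tfrac{1}{2}\bigr)=2\mu+1$, which is exactly what converts the conclusion $x^{\mu+\frac{1}{2}}p(\|x\|^{2})$ of Theorem~\ref{Teorema5.1} into $x^{2\mu+1}p(\|x\|^{2})$.
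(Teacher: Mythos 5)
Your proposal is correct and follows essentially the same route as the paper: it uses the similarity $(\Delta_{\mu})_{\mathcal{F}}=L_{r}\,(S_{\mu})_{\mathcal{B}}\,L_{r^{-1}}$, its inheritance by the fractional powers, and transposition to the strong duals (your $\Sigma=\sigma^{t}$ is exactly the paper's $(L_{r^{-1}})^{*}$, denoted $x^{\mu+\frac{1}{2}}:\mathcal{B}'\to\mathcal{F}'$), reducing the statement to Theorem \ref{Teorema5.1} and then identifying $u=x^{\mu+\frac{1}{2}}\cdot x^{\mu+\frac{1}{2}}p(\|x\|^{2})=x^{2\mu+1}p(\|x\|^{2})$. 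The only cosmetic difference is that the paper deduces $(-(S_{\mu})_{\mathcal{B'}})^{\alpha}x^{-\mu-\frac{1}{2}}u=0$ by testing against $\psi=x^{\mu+\frac{1}{2}}x^{-\mu-\frac{1}{2}}\psi$ with $\psi\in\mathcal{B}$, which is the same as your appeal to the injectivity of $\Sigma$.
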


This paper is organized as follow. In section 2 we summarize basic results related with harmonic analysis in the Hankel setting. Section 3 contain a brief review of non-negative operators in Banach and locally convex spaces and properties of fractional powers of similar operators. In sections 4, 5 and 6 we study the non-negativity of Bessel operator \eqref{nOBZ} and \eqref{nOBH} . Finally, sections 6 and 7 contains Liouville's theorems for the two fractional Bessel operators.

\section{Preliminaries}
In this section we introduce the Lebesgue and distributional spaces necessary four our purposes.

We now present some notational conventions that will allow us to simplify the presentation of our results. Let $\Rn$ be the $n$-dimensional  euclidean space, $\Rnm=\ceroinf$ the $n$-tuples of real positive numbers. We denote by  $x=(x_1,\ldots,x_n)$ and $y=(y_1,\ldots,y_n)$ to the elements of $\ceroinf$ or $\R^{n}$ and let $\N$ be the set $\{1,2,3,\ldots\}$ and $\N_{0}=\N\cup\{0\}$. For $x\in\Rn$, the norm is given by $\|x\|=(x_{1}^{2}+\ldots + x_{n}^{2})^{\frac{1}{2}}$. The notations $x<y$ and   $x\leq y$ mean, respectively $x_i<y_i$ and  $x_i\leq y_i$, for $i=1,\ldots,n$. Moreover if  $x\in\R^{n}$ and $a\in\R$, $x=a$ means $x_1=x_2=\ldots=x_n=a$. We denote $e_j$, for  $j=1,\ldots,n$, the elements of the canonical basis $\R^{n}$.
An element $k=(k_1,\ldots,k_n)\in\N_{0}^{n}=\N_{0}\times\N_{0}\times\ldots\times\N_{0}$ is called multi-index. For $k,m$ multi-index we set $|k|=k_1+\ldots+k_n$.

Also we will note
\[
k!=k_1!\ldots k_n!
\qquad \text{and}
\qquad \binom{k}{m}=\binom{k_1}{m_1}\ldots
\binom{k_n}{m_n}
\qquad \text{for} \: k,m\in\N_{0}^{n}.
\]
If $x\in\Rn$ and $\beta\in\Rn$, we define
\begin{equation}\label{x^b}
x^{\beta}=x_{1}^{\beta_1}\ldots x_{n}^{\beta_n}.
\end{equation}
In particular if $a\in\R$, $a^{\beta}$ means
\begin{equation}\label{constante^b}
a^{\beta}=a^{\beta_1}\ldots a^{\beta_n},
\end{equation}
and if  $\beta$ is a multi-index, $\beta\in\N_{0}^{n}$
\[a^{\beta}=a^{|\beta|}.\]
For  $\alpha\in\R$ let $\bm{\alpha}=(\alpha,\ldots,\alpha)$,
then for  $a\in\R\:\text{and}\:x\in\Rn$
\begin{equation}\label{x^{constante}}
a^{\bm{\alpha}}=(a^{n})^{\alpha}
\quad \text{y}\quad
x^{\bm{\alpha}}=x_{1}^{\alpha}\ldots x_{n}^{\alpha}=x^{\alpha}.
\end{equation}

\noindent If $\beta$ is a multi-index, $\beta=(\beta_{1},\ldots,\beta_{n})$ y $\alpha\in\R$ let
\begin{equation}\label{multi-real}
\beta-\alpha=
(\beta_{1}-\alpha,\ldots,\beta_{n}-\alpha)=
\beta-\bm{\alpha}
\end{equation}

\bigskip
\noindent If $D_j=\frac{\partial}{\partial x_j}$, $j=1,\ldots,n$ then the partial derivatives respect to $x$ is denoted by
\[D^{k}=D_{1}^{k_1}\ldots D_{n}^{k_n}.\]
where $k$ is a multi-index. We define the operators \[T_{j}=x_{j}^{-1}D_{j}\]
for  $j=1,\ldots,n$. For a multi-index $k$ we shall write
\[T^{k}=T_{n}^{k_n}\circ T_{n-1}^{k_{n-1}}\circ\ldots\circ T_{1}^{k_1}.\]

\begin{remark}
Let $k$ be a multi-index and $\theta,\varphi$ differentiable functions up to order $|k|$, the following equality is valid
\begin{equation}\label{Leibniz}
  T^{k}\{\theta\cdot\varphi\}=\sum_{j=0}^{k}\binom{k}{j}T^{k-j}\theta. T^{j}\varphi,
\end{equation}

\noindent where $\:"\cdot\:"$  denote the usual product of functions.
\end{remark}

\bigskip
Hankel transformation appears in mathematical literature in various forms,  two classical versions correspond to the versions studied by A. H. Zemanian \cite{Ze87,Ze66}
\begin{equation}\label{1HZ}
(\HZ_{\alpha}f)(t)=
\int_{0}^{\infty}
f(x)\sqrt{xt}J_{\alpha}(xt)\:dx,
\qquad t\in(0,\infty)
\end{equation}
and I. I. Hirschman \cite{Hi60}
\begin{equation}\label{1HH}
(\HH_\alpha f)(t)=
\int_{0}^{\infty} f(x)(xt)^{-\alpha}J_{\alpha}(xt)\:x^{2\alpha+1}\:dx,
\qquad t\in(0,\infty)
\end{equation}
where $\alpha>-\frac{1}{2}$ and $J_{\alpha}$ is the well known Bessel function of first kind and order $\alpha$.

\medskip

S. Molina y S. Trione studied in \cite{MT07,MT08} a $n$-dimensional  generalization  of \eqref{1HZ}, given by $\nHZ_{\mu}$ and defined by
\begin{equation}\label{nHZ}%Hankel n-dimensional Zemanian
(\nHZ_{\mu}\phi)(y)=
\int_{\Rnm}\phi(x_{1},\ldots,x_{n})\prod_{i=1}^{n}
\{\sqrt{x_{i}y_{i}}J_{\mu_{i}}(x_{i}y_{i})\}
\;dx_{1}\ldots dx_{n}.
\end{equation}

Analogously it is possible to define a $n$-dimensional generalization for
\eqref{1HH}, given by  $\nHH_{\mu}$ and defined by
\begin{equation}\label{nHH}
(\nHH_{\mu}\phi)(y)=
\int_{\Rnm}\phi(x_1,\ldots,x_n)\left\{\prod\limits_{i=1}^{n}
(x_iy_i)^{-\mu_i}J_{\mu_i}(x_iy_i)\,x_i^{2\mu_i+1}\right\}
\: dx_1\ldots dx_n.
\end{equation}

\noindent In both, \eqref{nHZ} and  \eqref{nHH},  $\mu=(\mu_{1},\ldots,\mu_{n})$, $\mu_{i}>-\frac{1}{2}$ and   $J_{\mu_i}$ represents the Bessel function of first kind and order $\mu_i$ for $i=1,\ldots n$.

\bigskip

Next we define certain weighted $L^{p}$-spaces for $1\leq p\leq\infty$. Let
\begin{equation}\label{peso-s}
s(x)=\frac{x^{2\mu+1}}{C_\mu}
\end{equation}
\begin{equation}\label{peso-r}
r(x)=x^{-\mu-\frac{1}{2}}
\end{equation}

 where $\mu=(\mu_{1},\ldots,\mu_{n})$, $x\in\Rnm$, $C_\mu=2^\mu\,\Gamma(\mu_1+1)\ldots \Gamma(\mu_n+1)$ and $dx$ is the usual $n$-dimensional Lebesgue and the powers  $x^{2\mu+1}$ and  $x^{-\mu-1/2}$ are given by \eqref{x^b} and $2^{\mu}$ is given by \eqref{constante^b}. Let  $L^{p}(\Rnm,sr^p)$, $1\leq p<\infty$ the space of measurable functions $f$  defined over $\Rnm$ with norm

\[\|f\|_{L^{p}(\Rnm,sr^p)}=
\left(\int_{\Rnm} |f(x)|^{p}\,s(x)r^p(x)\:dx\right)^{1/p}  \quad 1\leq p<\infty.\]

Moreover,  $L^{\infty}(\Rnm,r)$, is the space of measurable functions over  $\Rnm$ such that
 \[\|f\|_{L^{\infty}(\Rnm,r)}= \esssup_{x\in\Rnm}\:|r(x)f(x)|<\infty.\]

In particular, if $p=2$, $L^{2}(\Rnm,sr^2)=L^{2}(\Rnm)$.

\bigskip
For simplicity sometimes we write $L^{p}(sr^p)$ and $L^{\infty}(r)$ instead of $L^{p}(\Rnm,sr^p)$ and $L^{\infty}(\Rnm,r)$.

\medskip

By $\mathcal{D}(\Rnm)$ we denote the space of functions in $C^{\infty}(\Rnm)$ with compact support in $\Rnm$ with the usual topology, and by $\mathcal{D'}(\Rnm)$ the space of classical distributions in $\Rnm$.

\bigskip

We consider the Zemanian space $\mathcal{H}_{\mu}$  of the functions $\phi\in C^{\infty}(\Rnm)$ such that
\[
\sup_{x\in\Rnm}
|(1+\|x\|^{2})^{m}T^{k}\{x^{-\mu-1/2}\phi(x)\}|<\infty:
m\in\N_{0},\;k\in\N_{0}^{n}
\]

\noindent endowed with the topology generated by the family of seminorms $\{\nu_{m,k}^{\mu}\}$, given by
\begin{equation}\label{Topologia H_mu}
\nu_{m,k}^{\mu}(\phi)=
\sup_{x\in\Rnm}
|(1+\|x\|^{2})^{m}T^{k}\{x^{-\mu-1/2}\phi(x)\}|
\end{equation}

\noindent where $-\mu-1/2=(-\mu_1-1/2,\ldots,-\mu_n-1/2)$ and the operators $T^{k}$ are given by
\[
T^{k}=T_{n}^{k_n}\circ T_{n-1}^{k_{n-1}}\circ\ldots\circ T_{1}^{k_1},
\]
\noindent where $T_{i}=x_{i}^{-1}\frac{\partial}{\partial x_{i}}$ and $k=(k_1,\ldots,k_n)$. $\mathcal{H}_{\mu}$ is a Fréchet space (see \cite{MT07}). The dual space of $\mathcal{H}_\mu$ is denoted by $\mathcal{H'}_\mu$.

\begin{remark}
Sometimes we will consider the family of seminorms
\begin{equation}\label{Topologia H_mu-2}
\gamma_{m,k}^{\mu}(\phi)=
\sup_{x\in\Rnm}
|x^{m}T^{k}\{x^{-\mu-1/2}\phi(x)\}|
\end{equation}
with $m,k\in\N_{0}^{n}$, which are equivalent to $\nu_{m,k}^{\mu}$.
\end{remark}

\begin{lemma}\label{lema3.1}
The following inclusions hold
\begin{equation}\label{lema3.1-eq1}
\mathcal{H}_{\mu}\subset
L^{1}(\Rnm,sr)\cap L^{\infty}(\Rnm,r)\subset
L^{p}(\Rnm,sr^{p}),
\quad 1\leq p<\infty
\end{equation}
where $s$ and $r$ are given by  \eqref{peso-s} and  \eqref{peso-r}
respectively.
\end{lemma}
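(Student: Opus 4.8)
The plan is to establish the chain of inclusions \eqref{lema3.1-eq1} by working through it from left to right, using the defining seminorms of $\mathcal{H}_\mu$ to control the relevant integrals and essential suprema. First I would unravel the weights: since $s(x)r(x) = \frac{x^{2\mu+1}}{C_\mu}\,x^{-\mu-\frac12} = \frac{x^{\mu+\frac12}}{C_\mu}$ and $s(x)r^p(x) = \frac{x^{2\mu+1}}{C_\mu}\,x^{-p(\mu+\frac12)} = \frac{x^{(2-p)\mu+(2-p)/2+\,p/2\cdot 0}}{C_\mu}$ (I would just compute the exponent carefully as $(2-p)(\mu+\tfrac12)+p\cdot\tfrac12\cdot 0$, i.e. $x^{2\mu+1-p(\mu+1/2)}$), the point is that for $\phi\in\mathcal{H}_\mu$ the function $\psi(x):=x^{-\mu-1/2}\phi(x)$ is smooth and, by the seminorm estimates with $k=0$, satisfies $|\psi(x)|\le C_m (1+\|x\|^2)^{-m}$ for every $m\in\N_0$, so $\psi$ is rapidly decreasing; moreover $|f(x)| = |x^{\mu+1/2}\psi(x)|$.

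For the first inclusion $\mathcal{H}_\mu\subset L^1(sr)\cap L^\infty(r)$: the $L^\infty(r)$ bound is immediate since $|r(x)f(x)| = |x^{-\mu-1/2}f(x)| = |\psi(x)| \le \nu_{0,0}^\mu(\phi)<\infty$. For the $L^1(sr)$ bound, $\int_{\Rnm}|f(x)|\,s(x)r(x)\,dx = \frac{1}{C_\mu}\int_{\Rnm}|x^{\mu+1/2}\psi(x)|\,x^{\mu+1/2}\,dx = \frac{1}{C_\mu}\int_{\Rnm}|\psi(x)|\,x^{2\mu+1}\,dx$, and I would split $\Rnm$ into the bounded region near the coordinate hyperplanes and the region at infinity: near $0$ the factor $x^{2\mu+1}$ is locally integrable because each $\mu_i>-\tfrac12$ gives $2\mu_i+1>0$, and $|\psi|$ is bounded there; at infinity, choosing $m$ large enough that $(1+\|x\|^2)^{-m}x^{2\mu+1}$ is integrable (possible since $\psi$ decays faster than any polynomial) closes the estimate. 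This is the step requiring the mildest care — essentially a rapid-decrease-times-polynomial-weight argument, combined with the observation $\mu_i>-\tfrac12$ handling the origin.

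For the second inclusion $L^1(sr)\cap L^\infty(r)\subset L^p(sr^p)$ for $1\le p<\infty$: given $f$ in the intersection, I would write $\int |f|^p s\,r^p\,dx = \int |f|^{p-1}|f|\,s\,r^{p-1}r\,dx = \int \big(|r f|\big)^{p-1}\,|f|\,s\,r\,dx \le \|f\|_{L^\infty(r)}^{p-1}\int |f|\,s\,r\,dx = \|f\|_{L^\infty(r)}^{p-1}\,\|f\|_{L^1(sr)}<\infty$, using $r^p = r^{p-1}\cdot r$ and pulling the essential-supremum factor out. (For $p=1$ this is trivial, and one should note $p-1\ge 0$ so no issue when $\|f\|_{L^\infty(r)}=0$.) This is a one-line Hölder-type interpolation, so there is no real obstacle here either.

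Overall I do not anticipate a serious obstacle; the only place demanding attention is verifying integrability of the weight $x^{2\mu+1}$ near the coordinate hyperplanes in the first inclusion, which is exactly where the hypothesis $\mu_i>-\tfrac12$ is used, and checking that the rapid decay of $x^{-\mu-1/2}\phi(x)$ — guaranteed by \emph{all} the seminorms $\nu_{m,0}^\mu$, not just one — dominates the polynomial growth of $x^{2\mu+1}$ at infinity. If one prefers, the equivalent seminorms $\gamma_{m,k}^\mu$ of \eqref{Topologia H_mu-2} can be used instead, with $x^m$ in place of $(1+\|x\|^2)^m$; the argument is identical after choosing each component of the multi-index $m$ large enough.
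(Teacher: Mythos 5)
Your proposal is correct and follows essentially the same route as the paper: the $L^{\infty}(r)$ bound from the order-zero seminorm, the $L^{1}(sr)$ bound by splitting $\Rnm$ into a region near the coordinate hyperplanes (where $2\mu_i+1>0$ gives local integrability of $x^{2\mu+1}$) and a region at infinity controlled by a high-order seminorm, and the one-line factorization $\|\phi\|_{L^{p}(sr^{p})}\leq\|\phi\|_{L^{\infty}(r)}^{(p-1)/p}\|\phi\|_{L^{1}(sr)}^{1/p}$ for the second inclusion. The only cosmetic difference is that you phrase the decay via the $\nu_{m,0}^{\mu}$ seminorms with $(1+\|x\|^{2})^{-m}$, while the paper splits over $(0,1]^{n}$ and its complement using the equivalent $\gamma_{m,0}^{\mu}$ seminorms with $m>2\mu_i+2$; the substance is the same.
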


\begin{proof}
Let $\phi\in\mathcal{H}_{\mu}$,

\begin{equation}\label{lema3.1-eq2}
\|\phi\|_{L^{\infty}(\Rnm,r)}=
\sup\limits_{x\in\Rnm}|x^{-\mu-1/2}\phi(x)|=
\gamma_{0,0}^{\mu}(\phi),
\end{equation}
then $\phi\in L^{\infty}(\Rnm,r)$.

\bigskip

\noindent To show that $\mathcal{H}_{\mu}\subset
L^{1}(\Rnm,sr)$, let $\phi\in\mathcal{H}_{\mu}$, $m\in\N$ such that $m>2\mu_{i}+2$, for $i=1,\ldots,n$,  then
\begin{align*}
\int_{\Rnm}|\phi(x)|s(x)r(x)\;dx
&=
\int_{(0,1]^{n}}
|x^{-\mu-1/2}\phi(x)|\;\frac{x^{2\mu+1}}{C_{\mu}} dx +
\int_{\Rnm-(0,1]^{n}}
x^{m} |x^{-\mu-1/2}\phi(x)|\;\frac{x^{2\mu+1-m}}{C_{\mu}} dx\\
& \leq
\gamma_{0,0}^{\mu}(\phi)\:C_{\mu}^{-1}
\int_{(0,1]^{n}}
x^{2\mu+1} dx +
\gamma_{m,0}^{\mu}(\phi)\:C_{\mu}^{-1}
\int_{\Rnm-(0,1]^{n}}
x^{2\mu+1-m} dx
<\infty.
\end{align*}

\noindent Thus

\begin{equation}\label{lema3.1-eq3}
\|\phi\|_{L^{1}(\Rnm,sr)}\leq
C\{\gamma_{0,0}^{\mu}(\phi)+\gamma_{m,0}^{\mu}(\phi)\},
\quad \phi\in\mathcal{H}_{\mu}.
\end{equation}

\bigskip

\noindent Now let us see that $L^{1}(\Rnm,sr)\cap L^{\infty}(\Rnm,r)\subset  L^{p}(\Rnm,sr^{p})$. Let $\phi\in L^{1}(\Rnm,sr)\cap L^{\infty}(\Rnm,r)$

\begin{align*}
\int_{\Rnm} |\phi(x)|^{p}& s(x)r^{p}(x)\;dx
=
\int_{\Rnm} |\phi(x)|^{p-1}r(x)^{p-1}\;|\phi(x)|s(x)r(x)\;dx\\
&=
\int_{\Rnm} |r(x)\phi(x)|^{p-1}\;|\phi(x)|s(x)r(x)\;dx\\
&\leq
\|\phi\|_{L^{\infty}(\Rnm,r)}^{p-1}
\|\phi\|_{L^{1}(\Rnm,sr)},
\end{align*}

\noindent from where

\begin{equation}\label{lema3.1-eq4}
\|\phi\|_{L^{p}(\Rnm,sr^{p})}\leq
\|\phi\|_{L^{\infty}(\Rnm,r)}^{\frac{p-1}{p}}
\|\phi\|_{L^{1}(\Rnm,sr)}^{\frac{1}{p}}
\end{equation}

From \eqref{lema3.1-eq2} and  \eqref{lema3.1-eq3} we can consider that there exist constants $C_{1}$ y $C_{2}$ such that

\begin{equation}\label{lema3.1-eq5}
\|\phi\|_{L^{\infty}(\Rnm,r)}\leq
C_{1}\{\gamma_{0,0}^{\mu}(\phi)+\gamma_{m,0}^{\mu}(\phi)\},
\quad \phi\in\mathcal{H}_{\mu}.
\end{equation}

\begin{equation}\label{lema3.1-eq6}
\|\phi\|_{L^{1}(\Rnm,sr)}\leq
C_{2}\{\gamma_{0,0}^{\mu}(\phi)+\gamma_{m,0}^{\mu}(\phi)\},
\quad \phi\in\mathcal{H}_{\mu}.
\end{equation}

Then from \eqref{lema3.1-eq4},  \eqref{lema3.1-eq5} y  \eqref{lema3.1-eq6} we can consider a constant $C_{3}$ such that

\begin{equation}\label{lema3.1-eq7}
\|\phi\|_{L^{p}(\Rnm,sr^{p})}\leq
C_{3}\{\gamma_{0,0}^{\mu}(\phi)+\gamma_{m,0}^{\mu}(\phi)\},
\quad \phi\in\mathcal{H}_{\mu}.
\end{equation}
\end{proof}

\bigskip

\begin{remark}
If $\phi\in L^{1}(\Rnm,sr)$, then Hankel transform $\nHZ_{\mu}\phi$ is well defined because the kernel $(x_iy_i)^{-\mu_i}J_{\mu_i}(x_iy_i)$ is bounded for  $\mu_{i}>-\frac{1}{2}$,  $i=1,\ldots,n$ (see \cite[(1), pp.49]{Wa}),
\begin{align*}
&\int_{\Rnm}|\phi(x)|
\prod_{i=1}^{n}\{(x_iy_i)^{\mu_{i}+1/2}
|(x_iy_i)^{-\mu_{i}} J_{\mu_i}(x_iy_i)|\}\;dx\\
&\leq
y^{\mu+1/2} M^{n}\int_{\Rnm} |\phi(x)|\;x^{\mu+1/2}\;dx
= C y^{\mu+1/2} \|\phi\|_{L^{1}(\Rnm,sr)}<\infty
\end{align*}

\bigskip

By Lemma \ref{lema3.1}, $\nHZ_{\mu}\phi$ is well defined for all $\phi\in\mathcal{H}_{\mu}$ and is an automorphism of $\mathcal{H}_{\mu}$ (see \cite{Ze87} for the $1$-dimensional  case and \cite{MT07} for the $n$-dimensional case.)
\end{remark}

\bigskip
The space of continuous linear functions $T:\mathcal{H}_{\mu}\to \C$ is denoted by $\mathcal{H'}_{\mu}$. We call a function  $f\in L^{1}_{loc}(\Rnm)$ a regular element of $\mathcal{H'}_{\mu}$ if the application $T_{f}\in \mathcal{H'}_{\mu}$ where $T_{f}(\phi)=\int_{\Rnm}f\phi$, with $\phi\in\mathcal{H}_{\mu}$.

\begin{lemma}\label{lema3.2}
Let $1\leq p <\infty$. A function in $L^{p}(\Rnm,sr^{p})$ or in $L^{\infty}(\Rnm,r)$ is a regular element of $\mathcal{H'}_{\mu}$. In particular, the functions in $\mathcal{H}_{\mu}$ can be considered as regular elements of $\mathcal{H'}_{\mu}$.
\end{lemma}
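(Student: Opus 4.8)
The plan is to show that the natural embedding $f\mapsto T_f$, $T_f(\phi)=\int_{\Rnm}f\phi$, is a continuous linear functional on $\mathcal{H}_\mu$ whenever $f\in L^p(\Rnm,sr^p)$ for some $1\le p<\infty$, or $f\in L^\infty(\Rnm,r)$. Continuity on the Fréchet space $\mathcal{H}_\mu$ amounts to bounding $|T_f(\phi)|$ by a finite sum of the seminorms $\gamma^\mu_{m,k}(\phi)$ (equivalently $\nu^\mu_{m,k}$), so the heart of the matter is an integrability estimate that pairs $f$ against $\phi$ through the weights $s$ and $r$.

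First I would rewrite the pairing so that the weights appear explicitly. Since $s r^p$ and $r$ are the weights attached to $f$ in the two cases, and since every $\phi\in\mathcal{H}_\mu$ lies in $L^1(\Rnm,sr)\cap L^\infty(\Rnm,r)$ and in every $L^q(\Rnm,sr^q)$ by Lemma \ref{lema3.1}, the strategy is a Hölder argument. For the case $f\in L^p(sr^p)$ with $1<p<\infty$, write
\[
\int_{\Rnm}|f\phi|\,dx
=\int_{\Rnm}\bigl(|f|\,s^{1/p}r\bigr)\bigl(|\phi|\,s^{1/p'}\,r^{-1}\,s^{-1/p}\cdot s\cdot r\bigr)\,dx
\]
and arrange the exponents so that Hölder with exponents $p,p'$ produces $\|f\|_{L^p(sr^p)}$ times $\|\phi\|_{L^{p'}(sr^{p'})}$; for $p=1$ one pairs $f\in L^1(sr)$ directly with $\phi\in L^\infty(r)$. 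The case $f\in L^\infty(r)$ is dual: pair $|f|\,r^{-1}\cdot r$ with $|\phi|$, bound the first factor by $\|f\|_{L^\infty(r)}$, and recognize the remaining integral $\int_{\Rnm}r^{-1}|\phi|\,dx$. Here one must check that $\int_{\Rnm}r^{-1}|\phi|\,dx<\infty$; since $r^{-1}=x^{\mu+1/2}$ and $s(x)r(x)=x^{2\mu+1}x^{-\mu-1/2}/C_\mu = x^{\mu+1/2}/C_\mu$, we have $r^{-1}=C_\mu\, s r$, so this integral is just $C_\mu\|\phi\|_{L^1(\Rnm,sr)}$, which is finite and controlled by $\gamma^\mu_{0,0}(\phi)+\gamma^\mu_{m,0}(\phi)$ by \eqref{lema3.1-eq3}. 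In each case the $L^q(sr^q)$ (or $L^\infty(r)$, or $L^1(sr)$) norm of $\phi$ is then dominated by a finite combination of the seminorms via \eqref{lema3.1-eq5}, \eqref{lema3.1-eq6}, \eqref{lema3.1-eq7}, which yields continuity of $T_f$ and hence $T_f\in\mathcal{H}'_\mu$.

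For the final assertion, that functions in $\mathcal{H}_\mu$ are themselves regular elements of $\mathcal{H}'_\mu$, I would simply invoke the inclusion $\mathcal{H}_\mu\subset L^1(\Rnm,sr)\cap L^\infty(\Rnm,r)$ from Lemma \ref{lema3.1} and apply the case already proved (say $p=1$, or the $L^\infty(r)$ case). The main obstacle, such as it is, is purely bookkeeping: choosing the split of the weights $s^{1/p}s^{1/p'}=s$ and the matching powers of $r$ so that Hölder lands exactly on the norms for which Lemma \ref{lema3.1} gives control. There is no genuine analytic difficulty once the identity $r^{-1}=C_\mu s r$ and Lemma \ref{lema3.1} are in hand; the only point requiring a little care is the borderline $p=1$, which must be handled by the direct $L^1$--$L^\infty$ pairing rather than by Hölder with a conjugate exponent.
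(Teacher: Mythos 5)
Your proposal follows essentially the same route as the paper: rewrite $\int f\phi$ using the identity $r^{-1}=C_\mu sr$ (equivalently $s^{-1}r^{-1}=C_\mu r$), apply H\"older in the weighted space $L^p(\Rnm,s)$ against $L^q(\Rnm,s)$ with $rf$ and $r\phi$, and control the resulting $\|\phi\|_{L^q(sr^q)}$, $\|\phi\|_{L^1(sr)}$ or $\|\phi\|_{L^\infty(r)}$ by the seminorms $\gamma^\mu_{0,0}+\gamma^\mu_{m,0}$ via Lemma \ref{lema3.1}, exactly as the paper does (your separate direct $L^1$--$L^\infty$ pairing for $p=1$ is just the $q=\infty$ endpoint of the same estimate). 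The only blemish is that the displayed factorization of the integrand does not multiply back to $|f\phi|$ as written, but the intended split (namely $|f\phi|=C_\mu\,|rf|\,|r\phi|\,s$) is clear from your surrounding remarks and is correct.
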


\begin{proof}
Let $f\in L^{\infty}(\Rnm,r)$ and  $\phi\in\mathcal{H}_{\mu}$. Since $\mathcal{H}_{\mu}\subset L^{1}(\Rnm,sr)$, then $\phi\in L^{1}(\Rnm,r^{-1})$ and $(T_{f},\phi)=\int_{\Rnm}f\phi$ is well defined. So, by \eqref{lema3.1-eq3}

\begin{align*}
|(T_{f},\phi)|
&\leq
\|f\|_{L^{\infty}(\Rnm,r)}
\|\phi\|_{L^{1}(\Rnm,r^{-1})}=
C_{\mu}
\|f\|_{L^{\infty}(\Rnm,r)}
\|\phi\|_{L^{1}(\Rnm,sr)}\\
&\leq C\;C_{\mu}\|f\|_{L^{\infty}(\Rnm,r)}
\{\gamma_{0,0}^{\mu}(\phi)+\gamma_{m,0}^{\mu}(\phi)\},
\end{align*}

\noindent consequently, $f$ is a regular element of $\mathcal{H'}_{\mu}$.

\medskip

Now, let $f\in L^{p}(\Rnm,sr^{p})$ with $1\leq p<\infty$ and $\phi\in\mathcal{H}_{\mu}$, then

\begin{equation}\label{lema3.2-eq1}
\begin{split}
|(T_{f},\phi)|
&\leq
\int_{\Rnm}|f(x)\phi(x)|\;dx=
\int_{\Rnm} |r(x)f(x)|\;|s^{-1}(x)r^{-1}(x)\phi(x)|\;s(x)\;dx\\
&=
\int_{\Rnm} |r(x)f(x)|\;C_{\mu}|r(x)\phi(x)|\;s(x)\;dx.
\end{split}
\end{equation}

Since $r|f|\in L^{p}(\Rnm,s)$ and $r|\phi|\in L^{q}(\Rnm,s)$, being  $q$  such that $\frac{1}{p} + \frac{1}{q} = 1$, then due to H\"{o}lder's inequality and  \eqref{lema3.1-eq7} we obtain that
\[
|(T_{f},\phi)|
\leq
C_{\mu} \|f\|_{L^{p}(\Rnm,sr^{p})}
        \|\phi\|_{L^{q}(\Rnm,sr^{q})}
\leq
C\;C_{\mu} \|f\|_{L^{p}(\Rnm,sr^{p})}
        \{\gamma_{0,0}^{\mu}(\phi)+\gamma_{m,0}^{\mu}(\phi)\}
\]
with $m>2\mu_{i}+2$, for $i=1,\ldots,n$. Therefore $f$ is a regular element of $\mathcal{H'}_{\mu}$.
\end{proof}

\begin{remark}\label{L^2 subset H'mu}
In particular if  $p=2$, $L^{p}(\Rnm,sr^{p})=L^{2}(\Rnm)$ and from the previous  Lemma  we have that the functions in $L^{2}(\Rnm)$ can be considered as regular elements of $\mathcal{H'}_{\mu}$.
\end{remark}

\bigskip

Given $f,g$ defined on $\Rnm$, the Hankel convolution associated to the transformation  $\nHZ_{\mu}$ is defined formally by

\begin{equation}\label{nconv_Z}
(f\convZ g)(x)=
\int_{\Rnm}\int_{\Rnm}
\DZ_{\mu}(x,y,z)f(y)g(z)\;dy\;dz
\end{equation}
where for every $x,y,z\in\Rnm$,
\begin{equation}\label{nDZ} % núcleo de Zemanian
\DZ_{\mu}(x,y,z)=
\prod_{i=1}^{n} D_{\mu_i}(x_i,y_i,z_i)
\end{equation}
\noindent where  $D_{\alpha}$  is the Delsarte kernel defined in \cite{Delsarte}, given by

\begin{equation}\label{DZ}
D_{\alpha}(u,v,w)=
\frac{2^{\alpha-1}\;(uvw)^{-\alpha+1/2}}
{\Gamma(\alpha+1/2)\sqrt{\pi}}
A(u,v,w)^{2\alpha-1}
\end{equation}
\noindent and  $A(u,v,w)$ is the area of the triangle with sides $u,v,w\in\Rm$ y $\alpha\in\R$, $\alpha>-\frac{1}{2}$.

\medskip

Note that $|u-v|<w<u+v$ is the condition for such triangle to exist, and in this case
\begin{equation}\label{Área-lados}
A(u,v,w)=
\begin{cases}
\frac{1}{4}\sqrt{[(u+v)^{2}-w^{2}][w^{2}-(u-v)^{2}]}
&|u-v|<w<u+v\\
\hfil 0
& 0<w<|u-v|\quad\text{or}\quad w>u+v,
\end{cases}
\end{equation}

\begin{remark}
If $u,v$ and $w$ are the sides of a triangle and $\theta$ is the angle opposite the side $w$, then
\[A(u,v,w)= \frac{uv\sin\theta}{2}\]
\end{remark}

\begin{proposition}\label{prop-nDZ}
\begin{itemize}
\item[]
\item[(i)] $\DZ_{\mu}(x,y,z)\geq 0,\quad x,y,z\in\Rnm$.
\item[(ii)]$\int_{\Rnm}\DZ_{\mu}(x,y,z)
            \prod\limits_{i=1}^{n}
            \{\sqrt{z_it_i}J_{\mu_i}(z_it_i)\}\;dz=
            t^{-\mu-1/2}
             \prod\limits_{i=1}^{n}
            \{\sqrt{x_it_i}J_{\mu_i}(x_it_i)\}
             \prod\limits_{i=1}^{n}
            \{\sqrt{y_it_i}J_{\mu_i}(y_it_i)\}$
\item[(iii)] $\int_{\Rnm}z^{\mu+1/2}\DZ_{\mu}(x,y,z)\;dz=
             C_{\mu}^{-1} x^{\mu+1/2}y^{\mu+1/2}$
\end{itemize}
\end{proposition}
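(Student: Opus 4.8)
The plan is to reduce all three assertions to their one--dimensional analogues by exploiting the factorization $\DZ_\mu(x,y,z)=\prod_{i=1}^{n}D_{\mu_i}(x_i,y_i,z_i)$ together with the Fubini--Tonelli theorem, and then to invoke the classical properties of the Delsarte kernel $D_\alpha$ for $\alpha>-\tfrac12$.

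\emph{Part (i).} Reading off the explicit expression \eqref{DZ}, for $u,v,w\in\Rm$ the number $D_\alpha(u,v,w)$ is the product of the strictly positive quantities $2^{\alpha-1}$, $(uvw)^{-\alpha+1/2}$ and $\bigl(\Gamma(\alpha+1/2)\sqrt{\pi}\bigr)^{-1}$ --- it is here that $\alpha>-\tfrac12$ is used --- with the nonnegative factor $A(u,v,w)^{2\alpha-1}$, where by convention $D_\alpha=0$ when the triangle degenerates. Hence $D_\alpha\ge 0$, and therefore $\DZ_\mu=\prod_{i}D_{\mu_i}\ge 0$.

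\emph{Part (iii).} Since $\DZ_\mu\ge 0$ and $z^{\mu+1/2}=\prod_{i}z_i^{\mu_i+1/2}\ge 0$, Tonelli's theorem turns the left--hand side into $\prod_{i=1}^{n}\int_0^{\infty}w^{\mu_i+1/2}D_{\mu_i}(x_i,y_i,w)\,dw$, so the problem reduces to the one--dimensional identity $\int_0^{\infty}w^{\alpha+1/2}D_\alpha(u,v,w)\,dw=u^{\alpha+1/2}v^{\alpha+1/2}\bigl(2^{\alpha}\Gamma(\alpha+1)\bigr)^{-1}$; this, together with $C_\mu=2^{\mu}\Gamma(\mu_1+1)\cdots\Gamma(\mu_n+1)$, yields the stated formula. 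That one--dimensional integral is extended only over the compact interval $[\,|u-v|,\,u+v\,]$; writing $16\,A(u,v,w)^{2}=\bigl[(u+v)^{2}-w^{2}\bigr]\bigl[w^{2}-(u-v)^{2}\bigr]$ and substituting $s=w^{2}$ followed by the affine change carrying $[(u-v)^{2},(u+v)^{2}]$ onto $[0,1]$, the integral collapses to the Beta integral $B(\alpha+\tfrac12,\alpha+\tfrac12)$, and the Legendre duplication formula produces precisely the constant $\bigl(2^{\alpha}\Gamma(\alpha+1)\bigr)^{-1}$. (Alternatively, (iii) is the limit as $t\to 0^{+}$ of (ii), using $\sqrt{st}\,J_\alpha(st)\sim s^{\alpha+1/2}t^{\alpha+1/2}\bigl(2^{\alpha}\Gamma(\alpha+1)\bigr)^{-1}$ and the compact support of $D_\alpha(u,v,\cdot)$, which makes the passage to the limit routine.)

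\emph{Part (ii).} For fixed $t\in\Rnm$ the map $z\mapsto\prod_{i}\sqrt{z_it_i}\,J_{\mu_i}(z_it_i)$ is bounded on $\Rnm$, because $s\mapsto\sqrt{s}\,J_\alpha(s)$ is bounded on $(0,\infty)$ whenever $\alpha>-\tfrac12$, and $\DZ_\mu(x,y,\cdot)\in L^{1}(\Rnm)$, since each factor $D_{\mu_i}(x_i,y_i,\cdot)$ is supported on a bounded interval and has only integrable endpoint singularities; hence Fubini reduces (ii) to the one--dimensional product formula $\int_0^{\infty}D_\alpha(u,v,w)\sqrt{wt}\,J_\alpha(wt)\,dw=t^{-\alpha-1/2}\sqrt{ut}\,J_\alpha(ut)\sqrt{vt}\,J_\alpha(vt)$. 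Since the Hankel transform $(\HZ_\alpha g)(w)=\int_0^{\infty}g(t)\sqrt{wt}\,J_\alpha(wt)\,dt$ is an involution, this is equivalent to $\HZ_\alpha\bigl[D_\alpha(u,v,\cdot)\bigr](t)=(uv)^{1/2}t^{1/2-\alpha}J_\alpha(ut)J_\alpha(vt)$, that is, to the discontinuous Weber--Schafheitlin (Sonine--Gegenbauer) integral $D_\alpha(u,v,w)=(uvw)^{1/2}\int_0^{\infty}t^{1-\alpha}J_\alpha(ut)J_\alpha(vt)J_\alpha(wt)\,dt$, whose value is $0$ unless $u,v,w$ are the sides of a triangle and otherwise equals $2^{\alpha-1}(uvw)^{-\alpha}\bigl(\sqrt{\pi}\,\Gamma(\alpha+1/2)\bigr)^{-1}A(u,v,w)^{2\alpha-1}$, in agreement with \eqref{DZ}; see \cite{Delsarte} and \cite{Wa}. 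I expect the main obstacle to be precisely this step: controlling the conditional convergence of the triple Bessel integral and justifying the Hankel inversion in a suitable space of functions (or distributions); once that is secured, only the matching of constants remains.
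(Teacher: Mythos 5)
Your parts (i) and (iii), and your Fubini--Tonelli reduction of everything to the one--dimensional Delsarte kernel, are correct and essentially what the paper does; your evaluation of $\int_{|u-v|}^{u+v}w\,A(u,v,w)^{2\alpha-1}\,dw$ through the substitution $s=w^{2}$ and the Beta integral $B(\alpha+\tfrac12,\alpha+\tfrac12)$ with Legendre duplication is a perfectly good variant of the paper's computation (which instead substitutes $w=\sqrt{u^{2}+v^{2}-2uv\cos\theta}$ and uses $\int_{0}^{\pi}\sin^{2\alpha}\theta\,d\theta$ via \eqref{integral sin}); the constants match.

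The genuine gap is in (ii), and it is exactly the step you flag yourself. You recast the one--dimensional identity
$\int_{0}^{\infty}D_{\alpha}(u,v,w)\sqrt{wt}\,J_{\alpha}(wt)\,dw=t^{-\alpha-1/2}\sqrt{ut}\,J_{\alpha}(ut)\sqrt{vt}\,J_{\alpha}(vt)$
as the Hankel-inverse of the Weber--Schafheitlin/Sonine--Gegenbauer triple Bessel integral and then appeal to ``the Hankel transform is an involution.'' But the function you would have to invert, $F(t)=(uv)^{1/2}t^{1/2-\alpha}J_{\alpha}(ut)J_{\alpha}(vt)$, decays only like $t^{-1/2-\alpha}$ times an oscillation, so $F(t)\,t^{\alpha+1/2}$ is not absolutely integrable; the $L^{1}$-inversion theorem available in this setting (Theorem \ref{TeoInv-nHZ}, the only inversion result proved in the paper) does not apply, the triple integral itself converges only conditionally, and no summability or distributional argument is supplied to close this circle. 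As written, the key identity of (ii) is therefore not proved. The fix is to avoid inversion altogether and compute the left-hand side directly, which is what the paper does: the integral is over the compact interval $[|u-v|,u+v]$ and absolutely convergent (your integrability discussion already covers this), and the change of variables $w=\sqrt{u^{2}+v^{2}-2uv\cos\theta}$, $\theta\in(0,\pi)$, with $dw=\frac{uv\sin\theta}{\sqrt{u^{2}+v^{2}-2uv\cos\theta}}\,d\theta$, turns it into
$\int_{0}^{\pi}\frac{J_{\alpha}\bigl(t\sqrt{u^{2}+v^{2}-2uv\cos\theta}\bigr)}{\bigl(\sqrt{u^{2}+v^{2}-2uv\cos\theta}\bigr)^{\alpha}}\sin^{2\alpha}\theta\,d\theta$
(up to explicit factors), which is evaluated by Gegenbauer's product formula \eqref{Cilindrícas} (Watson, p.~367); the stated right-hand side then drops out with no conditional convergence or inversion to justify. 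If you prefer to quote the literature rather than compute, you should cite the product formula in this direction (the Gegenbauer/Macdonald formula for the Hankel transform of the compactly supported kernel), not the discontinuous triple integral that requires inverting it.
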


\begin{proof}
 For the proof of this result, we refer the reader to the Appendix, page \pageref{Proof-prop-nDZ}.
\end{proof}

\begin{lemma}\label{lema3.3}
Let  $f\in L^{1}(\Rnm,sr)$.
\begin{itemize}
\item[(i)] If $g\in L^{\infty}(\Rnm,r)$, then the convolution $f\convZ g(x)$ exists for every $x\in\Rnm$,  $f\convZ g(x)\in L^{\infty}(\Rnm,r)$ and
\begin{equation}\label{lema3.3-eq1}
\|f\convZ g\|_{L^{\infty}(\Rnm,r)}
\leq
\|f\|_{L^{1}(\Rnm,sr)}\|g\|_{L^{\infty}(\Rnm,r)}.
\end{equation}

\item[(ii)] If $g\in L^{p}(\Rnm,sr^{p})$, $1\leq p<\infty$, then the convolution $f\convZ g(x)$ exists for almost every $x\in\Rnm$,  $f\convZ g(x)\in L^{p}(\Rnm,sr^{p})$ and
\begin{equation}\label{lema3.3-eq2}
\|f\convZ g\|_{L^{p}(\Rnm,sr^{p})}
\leq
\|f\|_{L^{1}(\Rnm,sr)}\|g\|_{L^{p}(\Rnm,sr^{p})}.
\end{equation}
\end{itemize}
\end{lemma}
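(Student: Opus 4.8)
The statement is a Young-type inequality for the Hankel convolution $\convZ$, and the natural route is to use the positivity and normalization of the Delsarte kernel established in Proposition~\ref{prop-nDZ}. The key observation is that item (iii) of that proposition says precisely that $\int_{\Rnm} z^{\mu+1/2}\DZ_{\mu}(x,y,z)\,dz = C_\mu^{-1}x^{\mu+1/2}y^{\mu+1/2}$; rewriting this in terms of the weights $s$ and $r$ from \eqref{peso-s}--\eqref{peso-r}, and using $s(z)r(z) = z^{\mu+1/2}/C_\mu$, it becomes a statement that $\DZ_{\mu}(x,y,z)$, integrated against $s(z)r(z)\,dz$, produces $r^{-1}(x)r^{-1}(y)$ up to the constant $C_\mu$. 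This is the ``stochastic kernel'' property that makes the proof go through by a Minkowski/Fubini argument exactly as in the classical Young inequality $\|f*g\|_p \le \|f\|_1\|g\|_p$.

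\textbf{Part (i).} For the $L^\infty(r)$ estimate, I would write, for fixed $x\in\Rnm$,
\[
r(x)\,|(f\convZ g)(x)| \le \int_{\Rnm}\int_{\Rnm} r(x)\,\DZ_{\mu}(x,y,z)\,|f(y)|\,|g(z)|\,dy\,dz,
\]
and bound $|g(z)| \le \|g\|_{L^\infty(r)}\,r^{-1}(z)$. Then the $z$-integral is $\int_{\Rnm}\DZ_{\mu}(x,y,z)\,r^{-1}(z)\,dz$, which by Proposition~\ref{prop-nDZ}(iii) (after inserting the factor $s(z)$ and dividing it out, or directly since $r^{-1}(z) = z^{\mu+1/2}$) equals $C_\mu^{-1}x^{\mu+1/2}y^{\mu+1/2} = r^{-1}(x)\,r^{-1}(y)\,C_\mu^{-1}\cdot C_\mu$... more carefully: $\int \DZ_\mu(x,y,z)z^{\mu+1/2}dz = C_\mu^{-1}x^{\mu+1/2}y^{\mu+1/2}$, so multiplying by $r(x) = x^{-\mu-1/2}$ gives $C_\mu^{-1}y^{\mu+1/2} = r^{-1}(y)s(y)$. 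Hence the whole expression is bounded by $\|g\|_{L^\infty(r)}\int_{\Rnm}|f(y)|\,s(y)r(y)\,dy = \|f\|_{L^1(sr)}\|g\|_{L^\infty(r)}$, uniformly in $x$, which gives \eqref{lema3.3-eq1}; existence of the integral for every $x$ follows from the same absolute bound.

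\textbf{Part (ii).} For $1\le p<\infty$ I would apply Minkowski's integral inequality. Write $(f\convZ g)(x) = \int_{\Rnm}\int_{\Rnm}\DZ_{\mu}(x,y,z)f(y)g(z)\,dy\,dz$ and think of it as an average of translates of $g$ against the measure $\DZ_\mu(x,y,z)\,dy\,dz$. The clean way: set $h_y(x) = \int_{\Rnm}\DZ_\mu(x,y,z)g(z)\,dz$, so $f\convZ g = \int f(y)h_y\,dy$. One first checks, using Proposition~\ref{prop-nDZ}(iii) and Jensen's (or Hölder's) inequality with the probability measure $C_\mu\, z^{-\mu-1/2}... $ hmm — the probability measure here is $\DZ_\mu(x,y,z)z^{\mu+1/2}C_\mu / (x^{\mu+1/2}y^{\mu+1/2})\,dz$ — that $\|h_y\|_{L^p(sr^p)} \le C\, r^{-1}(y)\|g\|_{L^p(sr^p)}$ for a suitable constant, essentially $\le C_\mu^{-1}... $; then Minkowski's inequality gives $\|f\convZ g\|_{L^p(sr^p)} \le \int |f(y)|\,\|h_y\|_{L^p(sr^p)}\,dy \le \|f\|_{L^1(sr)}\|g\|_{L^p(sr^p)}$, with the weight bookkeeping matching up by design of $s$ and $r$. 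The a.e.\ existence of the convolution follows from finiteness of the resulting integral.

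\textbf{Main obstacle.} The real work is purely bookkeeping: tracking how the weights $s$ and $r$ interact with Proposition~\ref{prop-nDZ}(iii) so that the constants collapse to exactly $\|f\|_{L^1(sr)}$ with no stray $C_\mu$ factor, and setting up the Jensen/Hölder step in part (ii) with the correct probability measure $\DZ_\mu(x,y,z)\,z^{\mu+1/2}\,dz$ normalized by $C_\mu^{-1}x^{\mu+1/2}y^{\mu+1/2}$. Once the normalization in Proposition~\ref{prop-nDZ}(iii) is read correctly, both inequalities are the standard Young inequality argument carried over verbatim, so I expect no genuine difficulty beyond careful computation, and I would not grind through it here.
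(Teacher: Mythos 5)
Your proposal is correct, and part (i) is essentially the paper's own argument: bound $|g(z)|\le\|g\|_{L^{\infty}(r)}r^{-1}(z)$, integrate out $z$ with Proposition \ref{prop-nDZ}(iii), and conclude by Tonelli. For part (ii), however, you take a genuinely different route. The paper symmetrizes the problem by integrating out $y$ against $f$ first, defining the kernel $K(x,z)=C_{\mu}x^{-\mu-1/2}z^{-\mu-1/2}\int f(y)\DZ_{\mu}(x,y,z)\,dy$, checking the two $L^{1}$ bounds $\int|K(x,z)|s(x)\,dx\le\|rf\|_{L^{1}(s)}$ and $\int|K(x,z)|s(z)\,dz\le\|rf\|_{L^{1}(s)}$ (both via Proposition \ref{prop-nDZ}(iii) and the symmetry of $\DZ_{\mu}$), and then invoking the Schur-test form of the generalized Young inequality (Folland, Theorem 6.18) on $L^{p}(s)$ applied to $h=rg$. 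You instead decompose over $y$, writing $f\convZ g=\int f(y)h_{y}\,dy$ with $h_{y}(x)=\int\DZ_{\mu}(x,y,z)g(z)\,dz$, and use Jensen with the normalized measure $\DZ_{\mu}(x,y,z)z^{\mu+1/2}\,dz$ followed by Minkowski's integral inequality. Your sketch does close: using Jensen, then Proposition \ref{prop-nDZ}(iii) in the $x$-variable (the kernel is symmetric in its three arguments, which you should state explicitly), one gets exactly
\begin{equation*}
\|h_{y}\|_{L^{p}(\Rnm,sr^{p})}\le C_{\mu}^{-1}y^{\mu+1/2}\|g\|_{L^{p}(\Rnm,sr^{p})}=s(y)r(y)\|g\|_{L^{p}(\Rnm,sr^{p})},
\end{equation*}
so Minkowski yields \eqref{lema3.3-eq2} with no stray constant, and a.e.\ existence follows from finiteness of the iterated integral of absolute values. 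The trade-off: your argument is self-contained (no citation of Folland) but requires you to actually carry out the Jensen/Minkowski weight bookkeeping you deferred; the paper's argument offloads the interpolation in $p$ to a quoted theorem and only needs two one-dimensional-style $L^{1}$ kernel estimates. Both rest on the same two ingredients, Proposition \ref{prop-nDZ}(iii) and the symmetry of $\DZ_{\mu}$.
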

\begin{proof}
 For the proof of Lemma \ref{lema3.3}, refer to the Appendix, page \pageref{Proof-lema3.3}.
\end{proof}

\vspace{.2in}
The proof of the following results uses standard arguments and it will be omitted.
\begin{lemma}\label{lema3.4}
Let $f,g\in L^{1}(\Rnm,sr)$, then
\begin{equation}\label{lema3.4-eq1}
\nHZ_{\mu}(f\convZ g) = r\nHZ_{\mu}(f) \nHZ_{\mu}(g).
\end{equation}
\end{lemma}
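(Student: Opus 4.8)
The plan is to substitute the definitions of $\convZ$ and $\nHZ_{\mu}$ into the left-hand side of \eqref{lema3.4-eq1}, apply Fubini's theorem, and then collapse the inner integral by means of Proposition~\ref{prop-nDZ}(ii). To lighten the notation I would write $\psi_{t}(x)=\prod_{i=1}^{n}\sqrt{x_{i}t_{i}}\,J_{\mu_{i}}(x_{i}t_{i})$ for $x,t\in\Rnm$, so that $\nHZ_{\mu}h(t)=\int_{\Rnm}h(x)\psi_{t}(x)\,dx$. Since $f,g\in L^{1}(\Rnm,sr)$, Lemma~\ref{lema3.3}(ii) with $p=1$ gives $f\convZ g\in L^{1}(\Rnm,sr)$, so $\nHZ_{\mu}(f\convZ g)$ is well defined and
\[
\nHZ_{\mu}(f\convZ g)(t)=\int_{\Rnm}\left(\int_{\Rnm}\int_{\Rnm}\DZ_{\mu}(x,y,z)f(y)g(z)\,dy\,dz\right)\psi_{t}(x)\,dx.
\]

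First I would justify Fubini's theorem for this triple integral; this is the only step requiring care. From the bound $|(x_{i}t_{i})^{-\mu_{i}}J_{\mu_{i}}(x_{i}t_{i})|\le M$ (see \cite[(1), pp.49]{Wa}) one obtains $|\psi_{t}(x)|\le M^{n}\,t^{\mu+1/2}x^{\mu+1/2}$, and since $\DZ_{\mu}\ge 0$ by Proposition~\ref{prop-nDZ}(i), the modulus of the integrand is dominated by $M^{n}\,t^{\mu+1/2}x^{\mu+1/2}\DZ_{\mu}(x,y,z)|f(y)||g(z)|$. Integrating this bound first in $x$ via Proposition~\ref{prop-nDZ}(iii) and then in $(y,z)$, and using $s(x)r(x)=x^{\mu+1/2}/C_{\mu}$, the integral equals $M^{n}C_{\mu}\,t^{\mu+1/2}\|f\|_{L^{1}(\Rnm,sr)}\|g\|_{L^{1}(\Rnm,sr)}<\infty$, so Fubini applies.

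With Fubini available, I would interchange the order of integration so that the $x$-integration is carried out first:
\[
\nHZ_{\mu}(f\convZ g)(t)=\int_{\Rnm}\int_{\Rnm}f(y)g(z)\left(\int_{\Rnm}\DZ_{\mu}(x,y,z)\psi_{t}(x)\,dx\right)dy\,dz.
\]
By \eqref{nDZ}, \eqref{DZ} and \eqref{Área-lados} the kernel $\DZ_{\mu}(x,y,z)$ is invariant under permutations of its three $n$-tuples of arguments, so Proposition~\ref{prop-nDZ}(ii), applied after exchanging the roles of $x$ and $z$, gives $\int_{\Rnm}\DZ_{\mu}(x,y,z)\psi_{t}(x)\,dx=t^{-\mu-1/2}\psi_{t}(y)\psi_{t}(z)=r(t)\psi_{t}(y)\psi_{t}(z)$. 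Substituting this and separating the $y$- and $z$-integrals yields
\[
\nHZ_{\mu}(f\convZ g)(t)=r(t)\left(\int_{\Rnm}f(y)\psi_{t}(y)\,dy\right)\left(\int_{\Rnm}g(z)\psi_{t}(z)\,dz\right)=r(t)\,\nHZ_{\mu}(f)(t)\,\nHZ_{\mu}(g)(t),
\]
which is \eqref{lema3.4-eq1}. I expect the verification of Fubini's theorem to be the main obstacle, but after the pointwise domination above it reduces to the finiteness of the two $L^{1}(\Rnm,sr)$ norms, so it is only a mild one.
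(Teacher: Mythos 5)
Your proof is correct: the paper omits this proof as using ``standard arguments'', and what you give — Tonelli/Fubini for the triple integral, justified by the uniform bound on $(x_it_i)^{-\mu_i}J_{\mu_i}(x_it_i)$ together with Proposition~\ref{prop-nDZ}, followed by collapsing the inner $x$-integral through the product formula (ii) — is exactly that standard argument, with the needed $L^{1}(\Rnm,sr)$ finiteness supplied by Lemma~\ref{lema3.3}. The only small point worth flagging is that your use of Proposition~\ref{prop-nDZ}(iii) with integration in $x$ rather than $z$ also rests on the permutation symmetry of $\DZ_{\mu}$, which you state explicitly only when invoking (ii); since that symmetry is immediate from \eqref{nDZ}, \eqref{DZ} and \eqref{Área-lados}, this is harmless.
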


\begin{lemma}
Let $f\in L^{1}(sr)$, then the Hankel transform $\nHZ_{\mu}f\in L^{\infty}(r)$ and \[\|\nHZ_{\mu}f\|_{L^{\infty}(r)}\leq \|f\|_{L^{1}(sr)}.\]
\end{lemma}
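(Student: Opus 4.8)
The plan is to estimate the Hankel transform directly from its integral representation \eqref{nHZ}, using the uniform boundedness of the Bessel kernel that was already invoked in the remark following Lemma~\ref{lema3.2}. First I would recall that, writing the kernel in the normalized form $(x_iy_i)^{-\mu_i}J_{\mu_i}(x_iy_i)$, there is a constant $M$ with $|(x_iy_i)^{-\mu_i}J_{\mu_i}(x_iy_i)|\le M$ for all $x_i,y_i>0$ whenever $\mu_i>-\tfrac12$ (see \cite[(1), pp.49]{Wa}); the natural normalization of $M$ here is $M=\tfrac{1}{2^{\mu_i}\Gamma(\mu_i+1)}$, the value of the kernel at the origin, so that $\prod_{i=1}^n M = C_\mu^{-1}$.

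Next I would write, for $y\in\Rnm$,
\begin{align*}
|r(y)(\nHZ_{\mu}f)(y)|
&= y^{-\mu-1/2}\left| \int_{\Rnm} f(x)\prod_{i=1}^{n}\{\sqrt{x_iy_i}\,J_{\mu_i}(x_iy_i)\}\;dx\right|\\
&= y^{-\mu-1/2}\left| \int_{\Rnm} f(x)\prod_{i=1}^{n}\{(x_iy_i)^{\mu_i+1/2}(x_iy_i)^{-\mu_i}J_{\mu_i}(x_iy_i)\}\;dx\right|\\
&\le y^{-\mu-1/2}\, y^{\mu+1/2}\int_{\Rnm}|f(x)|\,x^{\mu+1/2}\prod_{i=1}^n |(x_iy_i)^{-\mu_i}J_{\mu_i}(x_iy_i)|\;dx\\
&\le C_\mu^{-1}\int_{\Rnm}|f(x)|\,x^{\mu+1/2}\;dx
= \int_{\Rnm}|f(x)|\,s(x)r(x)\;dx = \|f\|_{L^1(\Rnm,sr)},
\end{align*}
where in the last line I used $s(x)r(x) = \dfrac{x^{2\mu+1}}{C_\mu}\,x^{-\mu-1/2} = C_\mu^{-1}x^{\mu+1/2}$ from the definitions \eqref{peso-s} and \eqref{peso-r}. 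Taking the essential supremum over $y\in\Rnm$ on the left gives $\|\nHZ_{\mu}f\|_{L^\infty(\Rnm,r)}\le\|f\|_{L^1(\Rnm,sr)}$, which in particular shows $\nHZ_{\mu}f\in L^\infty(r)$ and hence is finite a.e.

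The estimate is essentially the one already carried out in the remark after Lemma~\ref{lema3.2}, so there is no real obstacle; the only point requiring a little care is the bookkeeping of constants — making sure the product of the $n$ one-dimensional kernel bounds combines to exactly $C_\mu^{-1}$ so that the weight $s\cdot r$ reproduces $C_\mu^{-1}x^{\mu+1/2}$ and the inequality comes out with constant $1$. If one prefers to avoid pinning down the sharp value of $M$, one can simply absorb $M^n$ into a generic constant and state the bound with that constant; but since the paper's earlier computation already produces the clean normalization, I would keep the sharp constant $1$.
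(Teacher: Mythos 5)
Your estimate is correct and is precisely the standard argument the paper omits: it reuses the kernel bound $|z^{-\mu_i}J_{\mu_i}(z)|\le (2^{\mu_i}\Gamma(\mu_i+1))^{-1}$ (cf.\ \eqref{cota Bessel}) so that the product of the $n$ one-dimensional bounds is exactly $C_{\mu}^{-1}$, and together with $s(x)r(x)=C_{\mu}^{-1}x^{\mu+1/2}$ this gives the inequality with the sharp constant $1$. The only nit is a reference slip: the remark invoking the boundedness of the normalized kernel follows Lemma \ref{lema3.1}, not Lemma \ref{lema3.2}.
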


\begin{remark}\label{remark7.4 [Mo18]}
Given $f\in L^{1}(\Rnm)$ we have that $\nHZ_{\mu}f$ is continuous, is in  $L^{\infty}(\Rnm)$ and
\[
\|\nHZ_{\mu}f\|_{\infty}\leq C\|f\|_{1}.
\]
\end{remark}

\begin{proposition}\label{prop7.5 [Mo18]}
$\nHZ_{\mu}(L^{1}(\Rnm))\subset C_{0}(\Rnm)$
\end{proposition}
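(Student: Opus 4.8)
The statement to prove is the Riemann--Lebesgue-type result $\nHZ_{\mu}(L^{1}(\Rnm))\subset C_{0}(\Rnm)$, i.e. that for $f\in L^{1}(\Rnm)$ the Hankel transform $\nHZ_{\mu}f$ not only is continuous and bounded (which is Remark~\ref{remark7.4 [Mo18]}) but also vanishes at infinity on $\Rnm=(0,\infty)^n$. The plan is the standard density argument: exhibit a dense subclass of $L^{1}(\Rnm)$ whose Hankel transforms manifestly lie in $C_{0}(\Rnm)$, and then pass to the limit using the uniform bound $\|\nHZ_{\mu}f\|_{\infty}\le C\|f\|_{1}$ from Remark~\ref{remark7.4 [Mo18]} together with the fact that $C_{0}(\Rnm)$ is closed under uniform convergence.

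First I would fix the dense subclass. The natural candidate is $\mathcal{D}(\Rnm)$ (or, if more convenient, the Zemanian space $\mathcal{H}_{\mu}$, which by Lemma~\ref{lema3.1} embeds in $L^{1}(\Rnm,sr)$ and on which $\nHZ_{\mu}$ is an automorphism); either is dense in $L^{1}(\Rnm)$. For $\phi$ in this subclass one must check $\nHZ_{\mu}\phi\in C_{0}(\Rnm)$. The cleanest route is the one-dimensional classical fact (Zemanian~\cite{Ze87}) that the one-variable Hankel transform $\HZ_{\mu_i}$ of a compactly supported smooth function on $(0,\infty)$ decays at $0$ and at $\infty$; since the $n$-dimensional kernel in \eqref{nHZ} factors as $\prod_{i=1}^n \sqrt{x_iy_i}J_{\mu_i}(x_iy_i)$, for $\phi$ a finite sum of tensor products $\prod_i \phi_i(x_i)$ the transform $\nHZ_{\mu}\phi$ is the corresponding product $\prod_i (\HZ_{\mu_i}\phi_i)(y_i)$, hence lies in $C_0((0,\infty)^n)$; and finite sums of such tensor products are dense in $L^1(\Rnm)$. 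Alternatively one can argue directly: write $\nHZ_{\mu}\phi(y)=\int \phi(x)\prod_i\sqrt{x_iy_i}J_{\mu_i}(x_iy_i)\,dx$, use the asymptotics $J_{\mu_i}(t)=O(t^{\mu_i+1/2})$ as $t\to 0^+$ and $J_{\mu_i}(t)=\sqrt{2/\pi t}\cos(t-\mu_i\pi/2-\pi/4)+O(t^{-3/2})$ as $t\to\infty$ to get decay in the $y_i\to 0^+$ directions from the first asymptotic, and decay in the $y_i\to\infty$ directions from a Riemann--Lebesgue argument (integration by parts in $x_i$, exploiting smoothness and compact support of $\phi$) applied to the oscillatory factor $\cos(x_iy_i-\cdots)$.

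Then I would finish by approximation. Given $f\in L^1(\Rnm)$ and $\eps>0$, pick $\phi$ in the dense subclass with $\|f-\phi\|_1<\eps$. By Remark~\ref{remark7.4 [Mo18]}, $\|\nHZ_{\mu}f-\nHZ_{\mu}\phi\|_{\infty}=\|\nHZ_{\mu}(f-\phi)\|_{\infty}\le C\eps$. Since $\nHZ_{\mu}\phi\in C_0(\Rnm)$ and $C_0(\Rnm)$ is a closed subspace of $(C_b(\Rnm),\|\cdot\|_{\infty})$, and $\nHZ_{\mu}f$ is continuous and bounded (again Remark~\ref{remark7.4 [Mo18]}), letting $\eps\to 0$ gives $\nHZ_{\mu}f\in C_0(\Rnm)$.

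The main obstacle is the base case: verifying carefully that $\nHZ_{\mu}\phi\in C_0(\Rnm)$ for $\phi$ in the chosen dense class, and in particular controlling the behaviour near the boundary hyperplanes $\{y_i=0\}$ of $\Rnm$ as well as at genuine infinity --- the $n$-dimensional ``infinity'' of $(0,\infty)^n$ includes all directions in which some coordinate tends to $0$ or to $\infty$. The tensor-product reduction together with the known one-dimensional statement handles both uniformly, and the $y_i\to 0^+$ decay is in fact immediate from the $t^{\mu_i+1/2}$ vanishing of $\sqrt{t}J_{\mu_i}(t)$; the only genuine work is the Riemann--Lebesgue estimate in the $y_i\to\infty$ directions, which is routine given smoothness and compact support. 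Everything else is soft functional analysis.
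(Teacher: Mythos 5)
Your proposal is correct, and its overall scheme is the same as the paper's: approximate $f\in L^{1}(\Rnm)$ by a dense class of nice functions, use the uniform bound $\|\nHZ_{\mu}(f-\phi)\|_{\infty}\leq C\|f-\phi\|_{1}$ of Remark \ref{remark7.4 [Mo18]}, and conclude because $C_{0}(\Rnm)$ is closed under uniform limits. Where you diverge is in the base case. The paper takes the dense class to be $\mathcal{H}_{\mu}$ (after first checking $L^{1}(\Rnm,sr)\cap L^{\infty}(\Rnm,r)\subset L^{1}(\Rnm)$, hence $\mathcal{H}_{\mu}\subset L^{1}(\Rnm)$, with density coming from $\mathcal{D}(\Rnm)\subset\mathcal{H}_{\mu}$), and then gets $\nHZ_{\mu}\phi\in C_{0}(\Rnm)$ for free from the fact that $\nHZ_{\mu}$ is an automorphism of $\mathcal{H}_{\mu}$ together with the (seminorm-immediate) inclusion $\mathcal{H}_{\mu}\subset C_{0}(\Rnm)$, where vanishing as $x_i\to 0^{+}$ uses $\mu_i+\tfrac12>0$ and vanishing at infinity uses the weights $(1+\|x\|^{2})^{m}$. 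You instead re-prove the base case by hand: tensor products of one-dimensional $C_c^{\infty}$ functions, factorization of the kernel, and a Riemann--Lebesgue/integration-by-parts estimate for $y_i\to\infty$ plus the $t^{\mu_i+1/2}$ vanishing of $\sqrt{t}J_{\mu_i}(t)$ for $y_i\to 0^{+}$. Your route is self-contained and does not invoke the automorphism theorem for $\nHZ_{\mu}$ on $\mathcal{H}_{\mu}$, at the cost of the oscillatory-integral work and of checking that products of one-variable $C_0$ functions vanish at the full boundary-plus-infinity of $(0,\infty)^n$ (which you correctly flag and which does hold, since the factors are bounded); the paper's route is shorter because that analytic work is already packaged in the cited properties of $\mathcal{H}_{\mu}$. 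Both arguments are sound.
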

\begin{proof}
First, we observe that
\begin{equation}\label{prop7.5 [Mo18]-eq1}
L^{1}(\Rnm,sr)\cap L^{\infty}(\Rnm,r)
\subset
L^{1}(\Rnm).
\end{equation}
Let $Q$ the cube  $Q=[0,1]^{n}$, then
\begin{align*}
\int_{\Rnm}|f(x)|\:dx
&=
\int_{\Rnm}|f(x)|\:r(x)\:r^{-1}(x)\:dx\\
&=
\int_{Q\cap\Rnm}|f(x)|\:r(x)\:r^{-1}(x)\:dx
+
\int_{Q^{c}\cap\Rnm}|f(x)|\:r(x)\:r^{-1}(x)\:dx\\
&\leq
\|f\|_{L^{\infty}(\Rnm,r)}
\int_{Q\cap\Rnm} r^{-1}(x)\:dx
+
\int_{Q^{c}\cap\Rnm}|f(x)|\:r^{-1}(x)\:dx\\
&\leq
C\|f\|_{L^{\infty}(\Rnm,r)}
+
C_{\mu}\|f\|_{L^{1}(\Rnm,sr)},
\end{align*}
because $r(x)<1$ for $\|x\|>1$, $\mu_{i}>-\frac{1}{2}$, $i=1,\ldots,n$ and $r(x)s(x)=C_{\mu}^{-1}r^{-1}(x)$.

\bigskip
By \eqref{lema3.1-eq1} and \eqref{prop7.5 [Mo18]-eq1} we deduce that $\mathcal{H}_{\mu}\subset L^{1}(\Rnm)$. Since  $\mathcal{D}(\Rnm)\subset\mathcal{H}_{\mu}$ then $\mathcal{H}_{\mu}$ is dense in $L^{1}(\Rnm)$. Given $f\in L^{1}(\Rnm)$ and  $\{\phi_{m}\}\in \mathcal{H}_{\mu}$ such that $\phi_{m}\to f$ in $L^{1}(\Rnm)$, then by Remark \ref{remark7.4 [Mo18]} $\nHZ_{\mu}(\phi_{m})\to\nHZ_{\mu}(f)$ uniformly. Since $\nHZ_{\mu}(\phi_{m})\in C_{0}(\Rnm)$ then $\nHZ_{\mu}(f)\in C_{0}(\Rnm)$.

\smallskip
\end{proof}

\clearpage
\begin{lemma}\label{lema3.5}
Let $\{\phi_{m}\}\subset L^{1}(\Rnm,sr)$ such that
\begin{enumerate}
\item[(1)] $\phi_{m}\geq 0$ in $\Rnm$,
\item[(2)] $\int_{\Rnm} \phi_{m}(x)\;s(x)r(x)\;dx=1$
           for all $m\in\N$,
\item[(3)] For all $\eta>0$,
$\lim\limits_{m\to\infty}
\int_{\|x\|>\eta} \phi_{m}(x)\;r(x)s(x)\;dx=0$.
\end{enumerate}

\noindent Let $f\in L^{\infty}(\Rnm,r)$ and continuous in  $x_{0}\in\Rnm$, then $\lim\limits_{m\to\infty} f\convZ \phi_{m}(x_{0}) = f(x_{0})$. Moreover, if $rf$ is uniformly continuous in $\Rnm$ then \[\lim\limits_{m\to\infty} \|f\convZ\phi_{m}(x)-f(x)\|_{L^{\infty}(\Rnm,r)} = 0.\]
\end{lemma}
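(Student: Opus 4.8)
The plan is to reduce the statement to a standard approximate-identity argument in the Hankel setting, exactly analogous to the classical fact that convolution with a Dirac sequence recovers a bounded continuous function pointwise (and uniformly, under uniform continuity). First I would write, using Proposition \ref{prop-nDZ}(iii) and hypothesis (2),
\[
f\convZ\phi_{m}(x_{0})-f(x_{0})
=\int_{\Rnm}\int_{\Rnm}\DZ_{\mu}(x_{0},y,z)\,\bigl(f(y)-f(x_{0})\bigr)\,\phi_{m}(z)\,dy\,dz,
\]
after multiplying the identity $\int z^{\mu+1/2}\DZ_{\mu}(x_{0},y,z)\,dz=C_{\mu}^{-1}x_{0}^{\mu+1/2}y^{\mu+1/2}$ by the appropriate powers and recognizing the resulting quantity as $\int\int\DZ_{\mu}(x_{0},y,z)\,\phi_{m}(z)\,s(z)r(z)\,dz\,dy$-weighted mass equal to $1$ in the $y$-variable as well; one has to be a little careful here because the normalization in (2) is against $s(z)r(z)\,dz$, so I would carry the weights explicitly and use Lemma \ref{lema3.3}(i) to justify that all integrals converge absolutely for $f\in L^{\infty}(\Rnm,r)$ and $\phi_{m}\in L^{1}(\Rnm,sr)$.

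Next I would split the $z$-integral into the region $\|z\|\le\eta$ and its complement, for $\eta>0$ chosen so that continuity of $f$ at $x_{0}$ controls the integrand on the first region. The key geometric input is that the Delsarte kernel $\DZ_{\mu}(x_{0},y,z)$ is supported, in $y$, near $x_{0}$ when $z$ is small: since $A(u,v,w)=0$ unless $|u-v|<w<u+v$, the factor $D_{\mu_{i}}(x_{0,i},y_{i},z_{i})$ vanishes unless $|x_{0,i}-y_{i}|<z_{i}$, so on $\{\|z\|\le\eta\}$ the $y$ that contribute satisfy $|y_{i}-x_{0,i}|<\eta$ for each $i$. Combined with continuity of $f$ at $x_{0}$ (and the $L^{\infty}(r)$ bound to dominate $r(y)$-factors uniformly near $x_{0}$), the first region contributes at most $\varepsilon$ times a bounded total mass, using Proposition \ref{prop-nDZ}(iii) again to evaluate that mass. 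On the second region $\|z\|>\eta$ we bound $|f(y)-f(x_{0})|$ crudely by $C\,r(y)^{-1}\|f\|_{L^{\infty}(r)}+|f(x_{0})|$, integrate out $y$ against $\DZ_{\mu}$ via (iii), and are left with $C\int_{\|z\|>\eta}\phi_{m}(z)\,r(z)s(z)\,dz$, which tends to $0$ by hypothesis (3). Choosing $\eta$ first and then $m$ large gives the pointwise limit.

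For the uniform statement, when $rf$ is uniformly continuous on $\Rnm$ the same decomposition works with $\eta$ chosen from the modulus of continuity of $rf$, but one must track the weight: I would estimate $r(x_{0})\bigl(f\convZ\phi_{m}(x_{0})-f(x_{0})\bigr)$ and use that on the near-diagonal region $r(x_{0})$ and $r(y)$ are comparable (both bounded in terms of each other since $|y_{i}-x_{0,i}|<\eta$), so that $|r(x_{0})f(y)-r(x_{0})f(x_{0})|\le |r(y)f(y)-r(x_{0})f(x_{0})|+|r(x_{0})-r(y)|\,|f(y)|$, each term small uniformly in $x_{0}$; the far region is handled as before with an extra factor $r(x_{0})$ that is harmless because of the bound $r(x_{0})\le$ (something controlled) once one multiplies through by $x_{0}^{\mu+1/2}$ in (iii). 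The main obstacle is precisely this bookkeeping of the weights $r$ and $s$ across the two regions — getting the comparability $r(x_{0})\sim r(y)$ on the support of the kernel to hold uniformly in $x_{0}$, including as $x_{0}\to 0$ and $x_{0}\to\infty$ — rather than any deep analytic difficulty; the measure-theoretic core (Lemma \ref{lema3.3} for absolute convergence, Proposition \ref{prop-nDZ}(iii) for the mass normalization, and (3) for the tail) is routine.
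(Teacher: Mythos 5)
Your skeleton is the same as the paper's: use the mass identity coming from Proposition \ref{prop-nDZ}(iii) together with hypothesis (2), split the $z$-integral into $\|z\|$ small/large, exploit the triangle-support of the Delsarte kernel to force $y$ close to $x_{0}$ on the near region, and use hypothesis (3) on the far region. But the weight bookkeeping you postpone is not a technicality; as sketched, two steps do not go through. First, your displayed identity is false: $\int\int\DZ_{\mu}(x_{0},y,z)\phi_{m}(z)\,dy\,dz\neq 1$, because (iii) normalizes the kernel only against the weight $y^{\mu+1/2}$ and (2) normalizes $\phi_{m}$ against $s(z)r(z)\,dz$. The correct decomposition (the one the paper uses) is
\[
f\convZ\phi_{m}(x_{0})-f(x_{0})
=\int_{\Rnm}\int_{\Rnm}y^{\mu+1/2}\phi_{m}(z)\,\DZ_{\mu}(x_{0},y,z)\,
\bigl[y^{-\mu-1/2}f(y)-x_{0}^{-\mu-1/2}f(x_{0})\bigr]\,dy\,dz .
\]
Second, on the far region your crude bound $|f(y)-f(x_{0})|\le r(y)^{-1}\|f\|_{L^{\infty}(r)}+|f(x_{0})|$ leaves, for the $|f(x_{0})|$ term, the unweighted integral $\int\DZ_{\mu}(x_{0},y,z)\,dy$, to which (iii) simply does not apply; and hypothesis (3) only controls the $sr$-weighted tail of $\phi_{m}$, so for $n\ge 2$ (where $\|z\|>\eta$ allows some $z_{i}$ to be tiny and hence $z^{\mu+1/2}$ to be tiny) you are not ``left with $C\int_{\|z\|>\eta}\phi_{m}\,rs\,dz$'' at all. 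In the weighted decomposition both problems vanish at once: the bracket is bounded by $2\|f\|_{L^{\infty}(r)}$, the factor $y^{\mu+1/2}$ is exactly what (iii) integrates, and the far term becomes $2\|f\|_{L^{\infty}(r)}\,x_{0}^{\mu+1/2}\int_{\|z\|>\eta}\phi_{m}(z)s(z)r(z)\,dz\to 0$.

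The same issue undermines your plan for the uniform statement. The comparability $r(x_{0})\sim r(y)$ on the kernel support, which you correctly identify as the main obstacle of your route, genuinely fails uniformly: on $|y_{i}-x_{0,i}|<z_{i}<\delta/\sqrt{n}$ the ratio $(x_{0,i}/y_{i})^{\mu_{i}+1/2}$ is unbounded as $x_{0,i}\to 0$, so no modulus-of-continuity argument for $f$ alone can rescue it near the boundary of $\Rnm$. With the weighted decomposition there is nothing to rescue: multiply by $r(x_{0})$, use $r(x_{0})x_{0}^{\mu+1/2}=1$, apply uniform continuity of $rf$ on the near region (where your support argument, which is the paper's, gives $\|y-x_{0}\|<\delta$), and apply (3) on the far region; both bounds are uniform in $x_{0}$. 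So the fix is not extra estimates on the unweighted kernel integral but adopting from the start the comparison of $r(y)f(y)$ with $r(x_{0})f(x_{0})$ under the weight $y^{\mu+1/2}$, i.e.\ the paper's proof.
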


\begin{proof}
First let us observe that

\begin{align}\label{lema3.5-eq1}
\int_{\Rnm}&
\int_{\Rnm}
x_{0}^{-\mu-1/2} y^{\mu+1/2}
\DZ_{\mu}(x_{0},y,z)\:\phi_{m}(z)\:dy
dz\\
&=
\int_{\Rnm}
x_{0}^{-\mu-1/2} \phi_{m}(z)
\left\{\int_{\Rnm}
y^{\mu+1/2}
\DZ_{\mu}(x_{0},y,z)\:dy
\right\}dz\nonumber\\
&=
\int_{\Rnm}
x_{0}^{-\mu-1/2} \phi_{m}(z)
C_{\mu}^{-1} x_{0}^{\mu+1/2} z^{\mu+1/2}\:dz \nonumber\\
&=
\int_{\Rnm}
\phi_{m}(z)\:s(z)r(z)\:dz = 1.\nonumber
\end{align}

\bigskip

\noindent Let $\eps>0$, then since $f$ is a continuous function in $x_{0}$, there exists $\delta>0$ such that if $\|y-x_{0}\|<\delta$, then $|y^{-\mu-1/2}f(y)-x_{0}^{-\mu-1/2}f(x_{0})|<\frac{\eps}{2x_{0}^{\mu+1/2}}$.

\begin{align*}
& f\convZ \phi_{m}(x_{0})-f(x_{0})
=
\int_{\Rnm}\int_{\Rnm}
y^{\mu+1/2}\phi_{m}(z)\:\DZ_{\mu}(x_{0},y,z)
[y^{-\mu-1/2}f(y)-x_{0}^{-\mu-1/2}f(x_{0})]\:dy\:dz\\
& =
\int_{\|z\|>\frac{\delta}{\sqrt{n}}}\int_{\Rnm}
y^{\mu+1/2}\phi_{m}(z)\:\DZ_{\mu}(x_{0},y,z)
[y^{-\mu-1/2}f(y)-x_{0}^{-\mu-1/2}f(x_{0})]\:dy\:dz\\
& +
\int_{\|z\|<\frac{\delta}{\sqrt{n}}}\int_{\Rnm}
y^{\mu+1/2}\phi_{m}(z)\:\DZ_{\mu}(x_{0},y,z)
[y^{-\mu-1/2}f(y)-x_{0}^{-\mu-1/2}f(x_{0})]\:dy\:dz.
\end{align*}

\noindent Calling
\begin{align*}
I_{1} &=
\int_{\|z\|>\frac{\delta}{\sqrt{n}}}\int_{\Rnm}
y^{\mu+1/2}\phi_{m}(z)\:\DZ_{\mu}(x_{0},y,z)
[y^{-\mu-1/2}f(y)-x_{0}^{-\mu-1/2}f(x_{0})]\:dy\:dz\\
I_{2} &=
\int_{\|z\|<\frac{\delta}{\sqrt{n}}}\int_{\Rnm}
y^{\mu+1/2}\phi_{m}(z)\:\DZ_{\mu}(x_{0},y,z)
[y^{-\mu-1/2}f(y)-x_{0}^{-\mu-1/2}f(x_{0})]\:dy\:dz,\\
\end{align*}

\noindent from where
\[
|f\convZ \phi_{m}(x_{0})-f(x_{0})| \leq |I_{1}|+|I_{2}|.
\]

\noindent Since $f\in L^{\infty}(r)$,

\[|y^{-\mu-1/2}f(y)-x_{0}^{-\mu-1/2}f(x_{0})|\leq 2\|f\|_{L^{\infty}(r)}.\]

\noindent Moreover, since $\lim\limits_{m\to\infty}\int_{\|z\|>\frac{\delta}{\sqrt{n}}} \phi_{m}(z)\:s(z)r(z)\:dz =0$, there exists $N_{0}\in\N$ such that
\[
\int_{\|z\|>\frac{\delta}{\sqrt{n}}} \phi_{m}(z)\:s(z)r(z)\:dz <
\frac{\eps}{4\|f\|_{L^{\infty}(r)} x_{0}^{\mu+1/2}},
\quad \forall\:m>N_{0}.
\]

\begin{align*}
|I_{1}| &\leq
\int_{\|z\|>\frac{\delta}{\sqrt{n}}}\int_{\Rnm}
y^{\mu+1/2}\phi_{m}(z)\:\DZ_{\mu}(x_{0},y,z)
|y^{-\mu-1/2}f(y)-x_{0}^{-\mu-1/2}f(x_{0})|\:dy\:dz\\
& =
2\|f\|_{L^{\infty}(r)}
\int_{\|z\|>\frac{\delta}{\sqrt{n}}}
\phi_{m}(z)
\left\{\int_{\Rnm}
y^{\mu+1/2}\:\DZ_{\mu}(x_{0},y,z)
\:dy\right\}dz\\
& =
2\|f\|_{L^{\infty}(r)}
\int_{\|z\|>\frac{\delta}{\sqrt{n}}}
\phi_{m}(z)C_{\mu}^{-1} x_{0}^{\mu+1/2} z^{\mu+1/2}
dz\\
& =
2\|f\|_{L^{\infty}(r)} x_{0}^{\mu+1/2}
\int_{\|z\|>\frac{\delta}{\sqrt{n}}}
\phi_{m}(z)\:s(z)r(z)\:dz\\
& <
2\|f\|_{L^{\infty}(r)} x_{0}^{\mu+1/2}
\frac{\eps}{4\|f\|_{L^{\infty}(r)} x_{0}^{\mu+1/2}}=\frac{\eps}{2}
\end{align*}

\noindent On the other hand, if $\|z\|<\frac{\delta}{\sqrt{n}}$, then $z_{i}\in(0,\delta/\sqrt{n})$ for all $i=1,\cdots,n$. Moreover, if we consider $\DZ_{\mu}(x_0,y,z)$ as a function depending on $y$ then

\begin{align*}
\supp\DZ_{\mu}(x_0,y,z)
&\subset
(|x_{1}^{0}-z_{1}|, x_{1}^{0}+z_{1})\times\cdots\times (|x_{n}^{0}-z_{n}|, x_{n}^{0}+z_{n}) \\
&\subset
(x_{1}^{0}-\delta/\sqrt{n}, x_{1}^{0}+\delta/\sqrt{n})\times\cdots\times (x_{n}^{0}-\delta/\sqrt{n}, x_{n}^{0}+\delta/\sqrt{n}).
\end{align*}
So, if $y\in\supp\DZ_{\mu}(x_0,y,z) $ then $|y_{i}-x_{i}^{0}|< \delta/\sqrt{n}$, for all $i=1,\cdots,n$. Thus
\[\|y-x_{0}\|_{\infty} = \max\limits_{1\leq i\leq n}\{|y_{i}-x_{i}^{0}|\}<\delta/\sqrt{n}.\]
\noindent Since the euclidean norm is equivalent to the uniform norm and $\|\cdot\|_{\infty}\leq\|\cdot\|\leq\sqrt{n} \|\cdot\|_{\infty}$, then we obtain that  $\|z\|<\frac{\delta}{\sqrt{n}}$ implies $\|y-x_{0}\|<\delta$, then

\begin{align*}
|I_{2}| &\leq
\int_{\|z\|<\frac{\delta}{\sqrt{n}}}\int_{\Rnm}
y^{\mu+1/2}\phi_{m}(z)\:\DZ_{\mu}(x_{0},y,z)
|y^{-\mu-1/2}f(y)-x_{0}^{-\mu-1/2}f(x_{0})|\:dy\:dz\\
&\leq
\int_{\|z\|<\frac{\delta}{\sqrt{n}}}\int_{\Rnm}
y^{\mu+1/2}\phi_{m}(z)\:\DZ_{\mu}(x_{0},y,z)
\frac{\eps}{2x_{0}^{\mu+1/2}}\:dy\:dz\\
& =
\frac{\eps}{2x_{0}^{\mu+1/2}}
\int_{\|z\|<\frac{\delta}{\sqrt{n}}}
\phi_{m}(z)
\left\{\int_{\Rnm}
y^{\mu+1/2}\:\DZ_{\mu}(x_{0},y,z)
\:dy\right\}dz\\
& =
\frac{\eps}{2x_{0}^{\mu+1/2}}
\int_{\|z\|<\frac{\delta}{\sqrt{n}}}
\phi_{m}(z)\:C_{\mu}^{-1}x_{0}^{\mu+1/2}z^{\mu+1/2}
dz\\
& =
\frac{\eps}{2}
\int_{\|z\|<\frac{\delta}{\sqrt{n}}}
\phi_{m}(z)\:s(z)r(z)\:dz
\leq
\frac{\eps}{2}
\int_{\Rnm}
\phi_{m}(z)\:s(z)r(z)\:dz = \frac{\eps}{2}.
\end{align*}

\noindent From where we have proved that given $\eps>0$, there exists $N_{0}\in\N$ such that $|f\convZ \phi_{m}(x_{0})-f(x_{0})|<\eps$, for all $n>N_{0}$. The uniform convergence is obtained analogously to the uniform continuity of $rf$.
\end{proof}

\bigskip
We are going to consider Bessel operators in $\Rnm$ given by \eqref{nOBZ} and \eqref{nOBH} which are related through
\begin{equation}\label{similaridad Op de Bessel}
\nBZ=x^{\mu+1/2}\nBH x^{-\mu-1/2},
\end{equation}
see remark \ref{proof similaridad Op de Bessel} for a proof.

\bigskip
Bessel operator \eqref{nOBZ}  and Hankel transform \eqref{nHZ} were studied in the distributional setting over the Zemanian spaces $\mathcal{H}_{\mu}$ and $\mathcal{H'}_{\mu}$ in \cite{Ze66} ($1$-dimensional case), \cite{Mo03} and \cite{MT07}  ($n$-dimensional case).

\bigskip

$S_{\mu}$ is a continuous  operator in $\mathcal{H}_{\mu}$ and selfadjoint, so the generalized Bessel operator  $S_{\mu}$ can be extended to $\mathcal{H'}_{\mu}$ by transposition
\begin{equation*}
(S_{\mu} f, \phi)=(f, S_{\mu}\phi),
\quad f\in\mathcal{H'}_{\mu}
\quad \phi\in\mathcal{H}_{\mu}.\\
\end{equation*}

\medskip

\noindent Analogously, generalized Hankel  transform $h_{\mu}f$ can be extended to $\mathcal{H'}_\mu$ by
\begin{equation*}
(h_{\mu}f, \phi)=(f, h_{\mu}\phi),
\quad f\in\mathcal{H'}_{\mu},
\quad \phi\in\mathcal{H}_\mu
\end{equation*}

\noindent for $\mu=(\mu_1,\ldots,\mu_n)$, $\mu_{i}>-\frac{1}{2}$, $i=1,\ldots,n$. Then $h_{\mu}$ is an automorfism over $\mathcal{H}_{\mu}$ and $\mathcal{H'}_{\mu}$.

\clearpage

There exist different proofs for the inversion theorem of the Hankel transform for the $1$-dimensional case. In this work we present a proof for the inversion theorem for the $n$-dimensional case, in the same way of the classic versions of the results known for the inversion of the Fourier transform in Lebesgue spaces.

\begin{theorem}\label{TeoInv-nHZ}
Let $f\in L^{1}(\Rnm,x^{\mu+1/2})$ and $\nHZ_{\mu}f\in L^{1}(\Rnm,x^{\mu+1/2})$ where $x^{\mu+1/2}$ is given by \eqref{x^b}. Then $f(x)$ may be redefined on a set of measure zero so that it is continuous on $\Rnm$ and
\begin{equation}\label{FormulaInv-nHZ}
f(x) = \nHZ_{\mu}(\nHZ_{\mu}f)(x),
\end{equation}
\noindent for almost every $x\in\Rnm$.
\end{theorem}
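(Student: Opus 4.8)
The plan is to mimic the classical Fourier-analytic proof of the inversion theorem, using a Gaussian-type approximate identity adapted to the Hankel setting. The key technical device is a family of functions $\{\psi_t\}_{t>0}\subset L^1(\Rnm,sr)$ whose Hankel transforms $\nHZ_\mu\psi_t$ are explicit, nonnegative, integrable against $x^{\mu+1/2}$, and — after the normalization built into the weights $s$ and $r$ — form an approximate identity in the sense of Lemma \ref{lema3.5}. The natural choice is the $n$-fold product of the one-dimensional kernels used in \cite{Mo18}, i.e. (up to the weight $x^{\mu+1/2}$) products of $e^{-t\|x\|^2}$-type factors, whose Hankel transforms are again Gaussians multiplied by a power of $x$, a classical computation with Bessel functions (Weber–Schafheitlin / Sonine). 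I would isolate this computation as a preliminary lemma so the main argument stays clean.

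First I would establish the multiplication formula: for $f\in L^1(\Rnm,x^{\mu+1/2})$ and any $g\in L^1(\Rnm,x^{\mu+1/2})$,
\[
\int_{\Rnm} (\nHZ_\mu f)(y)\, g(y)\, r(y)\,dy
= \int_{\Rnm} f(x)\, (\nHZ_\mu g)(x)\, r(x)\,dx,
\]
which follows from Fubini's theorem once one checks absolute integrability of the double integral — and that check is exactly the boundedness of the kernel $(x_iy_i)^{-\mu_i}J_{\mu_i}(x_iy_i)$ together with $f,g\in L^1(x^{\mu+1/2})$, as already used in the Remark following Lemma \ref{lema3.2}. Next, apply this with $g=g_t$ the dilated Gaussian-type kernel; by the preliminary lemma $\nHZ_\mu g_t$ is, after multiplication by the weights, a family $\phi_t$ satisfying hypotheses (1)–(3) of Lemma \ref{lema3.5}, while $\nHZ_\mu g_t \to$ (constant) pointwise and boundedly as $t\to 0^+$. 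This yields
\[
\nHZ_\mu(\nHZ_\mu f \cdot \text{(weight)})\ \text{at }x
= \lim_{t\to 0^+} (f \convZ \phi_t)(x).
\]

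Then I would invoke the hypothesis $\nHZ_\mu f\in L^1(\Rnm,x^{\mu+1/2})$ to pass the limit under the integral sign on the left-hand side by dominated convergence (the Gaussian factor is bounded by $1$), obtaining $\nHZ_\mu(\nHZ_\mu f)(x)$ as the limit. On the right-hand side, Lemma \ref{lema3.3} guarantees $f\convZ\phi_t \in L^1(\Rnm,sr)$ with uniformly bounded norms, and Lemma \ref{lema3.5} gives $f\convZ\phi_t(x)\to f(x)$ at every point of continuity of $f$; a standard argument (convergence in $L^1(\Rnm,sr)$ of $f\convZ\phi_t$ to $f$, hence a.e. along a subsequence, combined with the fact that the left side is genuinely continuous in $x$ because $\nHZ_\mu f\in L^1$) lets one redefine $f$ on a null set to be continuous and equal to $\nHZ_\mu(\nHZ_\mu f)$ everywhere. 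Equating the two sides gives \eqref{FormulaInv-nHZ}.

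The main obstacle is the preliminary lemma: producing an explicit normalized Hankel-Gaussian pair and verifying that the resulting $\phi_t$ genuinely satisfies conditions (1)–(3) of Lemma \ref{lema3.5} with respect to the measure $s(x)r(x)\,dx$ — in particular the normalization $\int \phi_t\, s r\, dx = 1$, which must come out of Proposition \ref{prop-nDZ}(iii) or a direct Bessel integral, and the concentration property (3). Everything else (the multiplication formula, the two limit passages, the continuity/redefinition step) is routine once this approximate identity is in hand; the $n$-dimensional case adds nothing essential beyond taking products, since all spaces, weights, and kernels factor over the coordinates.
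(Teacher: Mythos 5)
Your overall strategy --- a Gaussian-type approximate identity, a multiplication (Parseval-type) formula, dominated convergence on the transform side, and an a.e.\ subsequence on the convolution side --- is exactly the engine the paper uses; the difference is organizational. The paper proves the inversion theorem first for the Hirschman transform $\nHH_{\mu}$ (Theorem \ref{TeoInv-nHH}), working with the convolution $\convH$, the kernel $\phi_{m}(x)=m^{|\mu+1|}e^{-\|x\|^{2}m/2}$, the multiplication formula of Lemma \ref{Cap3-Prop3}, the Bessel--Gaussian integral \eqref{Prop3.14-eq1} (so your ``preliminary lemma'' is already available in the appendix), and the $L^{1}(\Rnm,s)$ approximate-identity theorem (Theorem \ref{teo1}); Theorem \ref{TeoInv-nHZ} is then obtained in three lines from the conjugation identity $x^{-\mu-1/2}\nHZ_{\mu}f=\nHH_{\mu}(x^{-\mu-1/2}f)$. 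You instead work directly with $\nHZ_{\mu}$ and $\convZ$; since the two settings are intertwined by multiplication by $x^{\mu+1/2}$, your route is equivalent in principle, and what it buys is avoiding the detour through $\nHH_{\mu}$, at the cost of redoing all the weight bookkeeping in the $\convZ$ setting.

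Two concrete points need repair. First, your multiplication formula is misstated: the kernel $\prod_{i}\sqrt{x_{i}y_{i}}\,J_{\mu_{i}}(x_{i}y_{i})$ of $\nHZ_{\mu}$ is symmetric in $x$ and $y$, so the correct identity is $\int_{\Rnm}\nHZ_{\mu}f\cdot g\,dt=\int_{\Rnm}f\cdot\nHZ_{\mu}g\,dt$ with no weight (this is Lemma \ref{Cap3-Prop3 para nHZ}); with the factors $r(y)\,dy$ and $r(x)\,dx$ inserted, Fubini yields $\nHZ_{\mu}(rg)$ rather than $r\,\nHZ_{\mu}g$, and the identification of the limit as $\nHZ_{\mu}(\nHZ_{\mu}f)(x)$ would not close. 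Second, Lemma \ref{lema3.5} cannot be applied to $f$: its hypotheses are $f\in L^{\infty}(\Rnm,r)$ together with continuity at the point, neither of which holds for a general $f\in L^{1}(\Rnm,x^{\mu+1/2})$, so the claim that $f\convZ\phi_{t}(x)\to f(x)$ at continuity points must be dropped. Your fallback --- $L^{1}$ convergence of $f\convZ\phi_{t}$ to $f$, an a.e.\ convergent subsequence, and continuity of the transform side --- is precisely what the paper does, but the required $L^{1}(\Rnm,sr)$ approximate-identity theorem for $\convZ$ is not proved anywhere and is not ``standard'' within the paper: only the $L^{1}(\Rnm,s)$ statement for $\convH$ is available (Theorem \ref{teo1}), and the natural way to transfer it is exactly the conjugation $f\mapsto x^{-\mu-1/2}f$. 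Once you invoke that, you have in effect reproduced the paper's reduction to Theorem \ref{TeoInv-nHH}, so it is cleaner to state the reduction explicitly than to rebuild the approximate-identity machinery in the weighted $\convZ$ setting.
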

\begin{proof}
For the proof of this result we refer the reader to the Appendix. Details can be found in page \pageref{Proof-TeoInv-nHZ}.
\end{proof}
\medskip
\begin{remark}\label{TeoInv-nHZ en Hmu}
From Theorem \ref{TeoInv-nHZ} we deduce immediately the validity of equality \eqref{FormulaInv-nHZ} in $\mathcal{H}_{\mu}$ and $\mathcal{H'}_{\mu}$.
\end{remark}

\vspace{.2in}
\noindent For the proof of the next results we refer the reader to \cite{MT07}.

\begin{lemma}\label{lema3.8}
Let $\phi\in\mathcal{H}_{\mu}$, then
\begin{itemize}
\item[(i)] $\nHZ_{\mu}\nBZ\phi=-\|y\|^{2}\nHZ_{\mu}\phi$.
\item[(ii)]$\nBZ\nHZ_{\mu}\phi=\nHZ_{\mu}(-\|x\|^{2}\phi)$.
\end{itemize}
\end{lemma}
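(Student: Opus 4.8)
The plan is to use the integral definition \eqref{nHZ} of $\nHZ_{\mu}$ together with the operational properties of the Bessel function, handling each coordinate separately because both $\nBZ$ and the kernel in \eqref{nHZ} factor as products over $i=1,\ldots,n$. First I would recall that the one-dimensional Zemanian-type kernel $\sqrt{xt}\,J_{\mu_i}(xt)$, viewed as a function of $x$, is an eigenfunction of the one-dimensional Bessel operator $\frac{d^2}{dx^2}+\frac{4\mu_i^2-1}{4x^2}$ with eigenvalue $-t^2$; this is the classical Bessel ODE after the substitution that produces the $\sqrt{\cdot}$ normalization, and it is exactly the fact underlying the one-dimensional versions of this lemma quoted from \cite{Ze87,Ze66}. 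Consequently, the full $n$-dimensional kernel $\prod_{i=1}^n\sqrt{x_iy_i}\,J_{\mu_i}(x_iy_i)$ satisfies $\nBZ^{(x)}\bigl(\prod_i\sqrt{x_iy_i}J_{\mu_i}(x_iy_i)\bigr)=-\|y\|^2\prod_i\sqrt{x_iy_i}J_{\mu_i}(x_iy_i)$, since applying $\sum_i\bigl(\partial_{x_i}^2+\frac{4\mu_i^2-1}{4x_i^2}\bigr)$ to a product of one-variable factors picks up the sum of the $n$ eigenvalues $-y_i^2$.

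For part (i), I would start from $(\nHZ_{\mu}\nBZ\phi)(y)=\int_{\Rnm}(\nBZ\phi)(x)\prod_i\sqrt{x_iy_i}J_{\mu_i}(x_iy_i)\,dx$, then integrate by parts twice in each variable $x_i$ to move the operator $\nBZ$ from $\phi$ onto the kernel. Because $\phi\in\mathcal{H}_{\mu}$ has rapid decay and the appropriate behavior at the boundary $x_i=0$ (encoded in the seminorms \eqref{Topologia H_mu} involving $x^{-\mu-1/2}\phi$ and the operators $T^k$), all boundary terms at $0$ and at $\infty$ vanish; this is precisely where the definition of the Zemanian space is used, and it is the step I expect to require the most care. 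After the integration by parts, the integrand becomes $\phi(x)\cdot\nBZ^{(x)}\bigl(\prod_i\sqrt{x_iy_i}J_{\mu_i}(x_iy_i)\bigr)=-\|y\|^2\phi(x)\prod_i\sqrt{x_iy_i}J_{\mu_i}(x_iy_i)$ by the eigenfunction identity above, and pulling the constant $-\|y\|^2$ out of the integral yields $-\|y\|^2(\nHZ_{\mu}\phi)(y)$, which is (i).

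For part (ii), the cleanest route is to apply $\nHZ_{\mu}$ to both sides of the identity in (i) with $\phi$ replaced by $\nHZ_{\mu}\phi$, and then invoke that $\nHZ_{\mu}$ is an involution on $\mathcal{H}_{\mu}$ (Remark \ref{TeoInv-nHZ en Hmu}, i.e.\ $\nHZ_{\mu}\nHZ_{\mu}=\mathrm{id}$ on $\mathcal{H}_{\mu}$). Indeed, (i) applied to $\nHZ_{\mu}\phi$ gives $\nHZ_{\mu}\bigl(\nBZ\nHZ_{\mu}\phi\bigr)=-\|y\|^2\nHZ_{\mu}(\nHZ_{\mu}\phi)=-\|y\|^2\phi$, and applying $\nHZ_{\mu}$ once more and using the involution property on the left yields $\nBZ\nHZ_{\mu}\phi=\nHZ_{\mu}(-\|x\|^2\phi)$, which is (ii). One only needs to check that $\|x\|^2\phi\in\mathcal{H}_{\mu}$ so that the right-hand side makes sense, which is immediate from the seminorm estimates \eqref{Topologia H_mu}. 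The main obstacle throughout is the justification of the two integrations by parts in part (i), specifically the vanishing of boundary contributions near $x_i=0$ for the full range $\mu_i>-\tfrac12$; everything else is bookkeeping on products of one-variable factors.
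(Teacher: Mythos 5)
The paper does not prove Lemma \ref{lema3.8} at all: it defers to \cite{MT07}, where the identity is obtained exactly along the lines you propose (the kernel of \eqref{nHZ} factors, $S_{\mu}$ is a sum of one--variable operators, and the one--dimensional operational rule of Zemanian is transported coordinate by coordinate). Your derivation of (ii) from (i) via $h_{\mu}h_{\mu}=\mathrm{id}$ on $\mathcal{H}_{\mu}$ (Remark \ref{TeoInv-nHZ en Hmu}, together with the automorphism property quoted from \cite{Ze87,MT07}) is legitimate within the paper's logical order, and it is a reasonable alternative to the more usual direct route, which differentiates under the integral sign in $y$ and uses the $x\leftrightarrow y$ symmetry of the kernel; your route avoids any domination argument at the price of invoking the inversion theorem.

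Two points in part (i), which you flag but do not settle, should be made precise. First, the sign of the singular potential: the eigenfunction identity is $\bigl(\tfrac{d^{2}}{dx_i^{2}}-\tfrac{4\mu_i^{2}-1}{4x_i^{2}}\bigr)\sqrt{x_iy_i}\,J_{\mu_i}(x_iy_i)=-y_i^{2}\sqrt{x_iy_i}\,J_{\mu_i}(x_iy_i)$; it holds for the operator with the minus sign, which is the one actually used in the similarity computation of Remark \ref{proof similaridad Op de Bessel} (the plus sign displayed in \eqref{nOBZ} is a misprint), so the ODE fact as you literally stated it is false, though harmless once the convention is fixed. Second, the boundary terms at $x_i=0$ do not vanish individually when $-\tfrac12<\mu_i<0$: writing $\phi=x_i^{\mu_i+1/2}g$ and $k=x_i^{\mu_i+1/2}h$ for the kernel, each of $\partial_{x_i}\phi\cdot k$ and $\phi\cdot\partial_{x_i}k$ is of order $x_i^{2\mu_i}$, which blows up; what saves the argument is the cancellation $\partial_{x_i}\phi\cdot k-\phi\cdot\partial_{x_i}k=x_i^{2\mu_i+1}\bigl(h\,\partial_{x_i}g-g\,\partial_{x_i}h\bigr)=O\bigl(x_i^{2\mu_i+2}\bigr)\to 0$, using that $g$, $h$, $x_i^{-1}\partial_{x_i}g$ and $x_i^{-1}\partial_{x_i}h$ stay bounded near the origin (this is exactly what the seminorms \eqref{Topologia H_mu} and the series \eqref{Ap-bessel} provide). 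Equivalently, and more cleanly, carry out the two integrations by parts with the factorized form $S_{\mu_i}=x_i^{-\mu_i-1/2}D_i\,x_i^{2\mu_i+1}D_i\,x_i^{-\mu_i-1/2}$, for which each boundary term is separately $O\bigl(x_i^{2\mu_i+2}\bigr)$ at the origin and rapidly decaying at infinity, so no cancellation needs to be tracked. With these two points supplied, your argument is complete and is essentially the proof the paper refers to.
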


\begin{lemma}\label{lema3.10}
If $u\in\mathcal{H'}_{\mu}$, then
\begin{itemize}
\item[(i)] $\nHZ_{\mu}\nBZ u = -\|x\|^{2}\nHZ_{\mu} u$.
\item[(ii)]$\nBZ\nHZ_{\mu} u = \nHZ_{\mu}(-\|y\|^{2} u)$.
\end{itemize}
\end{lemma}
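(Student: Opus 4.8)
The plan is to obtain Lemma~\ref{lema3.10} from its test-function version, Lemma~\ref{lema3.8}, by transposition, mirroring the way $\nBZ$ and $\nHZ_{\mu}$ were extended from $\mathcal{H}_{\mu}$ to $\mathcal{H}'_{\mu}$.

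The one preliminary fact I would establish is that multiplication by $\|x\|^{2}=x_{1}^{2}+\cdots+x_{n}^{2}$ is a continuous linear map of $\mathcal{H}_{\mu}$ into itself. This is routine on the seminorms $\gamma_{m,k}^{\mu}$ of \eqref{Topologia H_mu-2}: applying $T^{k}$ to $\|x\|^{2}\,x^{-\mu-1/2}\phi(x)$ via the Leibniz-type identity \eqref{Leibniz}, each factor $T^{j}\{\|x\|^{2}\}$ is a polynomial of low degree, so $\gamma_{m,k}^{\mu}(\|x\|^{2}\phi)$ is dominated by a finite sum of seminorms $\gamma_{m',k'}^{\mu}(\phi)$. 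Hence multiplication by $\|x\|^{2}$ transposes to a continuous linear operator on $\mathcal{H}'_{\mu}$, which we still write $u\mapsto\|x\|^{2}u$, defined by $(\|x\|^{2}u,\phi)=(u,\|x\|^{2}\phi)$ for every $\phi\in\mathcal{H}_{\mu}$ (on regular elements this is ordinary multiplication). Recall also that $\nHZ_{\mu}$ is an automorphism of $\mathcal{H}_{\mu}$ and that $\nBZ$ and $\nHZ_{\mu}$ act on $\mathcal{H}'_{\mu}$ by $(\nBZ u,\phi)=(u,\nBZ\phi)$ and $(\nHZ_{\mu}u,\phi)=(u,\nHZ_{\mu}\phi)$.

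For (i), fix $u\in\mathcal{H}'_{\mu}$ and $\phi\in\mathcal{H}_{\mu}$ and chain the definitions with Lemma~\ref{lema3.8}(ii):
\begin{align*}
(\nHZ_{\mu}\nBZ u,\phi)
&=(\nBZ u,\nHZ_{\mu}\phi)=(u,\nBZ\nHZ_{\mu}\phi)=(u,\nHZ_{\mu}(-\|x\|^{2}\phi))\\
&=(\nHZ_{\mu}u,-\|x\|^{2}\phi)=(-\|x\|^{2}\nHZ_{\mu}u,\phi).
\end{align*}
Since $\phi$ is arbitrary, $\nHZ_{\mu}\nBZ u=-\|x\|^{2}\nHZ_{\mu}u$. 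Part (ii) is the same computation with Lemma~\ref{lema3.8}(i):
\begin{align*}
(\nBZ\nHZ_{\mu}u,\phi)
&=(\nHZ_{\mu}u,\nBZ\phi)=(u,\nHZ_{\mu}\nBZ\phi)=(u,-\|y\|^{2}\nHZ_{\mu}\phi)\\
&=(-\|y\|^{2}u,\nHZ_{\mu}\phi)=(\nHZ_{\mu}(-\|y\|^{2}u),\phi),
\end{align*}
so $\nBZ\nHZ_{\mu}u=\nHZ_{\mu}(-\|y\|^{2}u)$. The switch of the symbol $x$ to $y$ between the two parts is purely notational, echoing Lemma~\ref{lema3.8}; in $\mathcal{H}'_{\mu}$ the relevant multiplication operator is the same.

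There is no genuine obstacle here: every step is a definition or a direct invocation of Lemma~\ref{lema3.8}, and the argument could reasonably be omitted as standard. The only point deserving attention is the continuity of multiplication by $\|x\|^{2}$ on $\mathcal{H}_{\mu}$, which is what legitimizes writing $\|x\|^{2}u$ as an element of $\mathcal{H}'_{\mu}$ and guarantees the identities hold in that space; after that, one just has to keep the transposition bookkeeping straight.
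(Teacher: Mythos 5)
Your transposition argument is correct and is exactly the standard route: the paper itself omits the proof (deferring to \cite{MT07}), handles the multiplier issue through Remark \ref{lema3.9} (the functions $\lambda+\|x\|^{2}$, $\lambda\geq 0$, are multipliers of $\mathcal{H}_{\mu}$ and $\mathcal{H'}_{\mu}$), and uses the same ``by transposition'' device for the distributional case of Lemma \ref{lema3.11}. Your bookkeeping with the dual pairings and the continuity of $\phi\mapsto\|x\|^{2}\phi$ on the seminorms $\gamma^{\mu}_{m,k}$ is sound, so nothing is missing.
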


\begin{remark}\label{lema3.9}
According to Lemma 3.2 in \cite{Mo03} the functions $(\lambda+\|x\|^{2})$ for $\lambda\geq 0$ and  $(\lambda+\|x\|^{2})^{-1}$ for $\lambda> 0$ belong to the space of multipliers of $\mathcal{H}_{\mu}$ and $\mathcal{H'}_{\mu}$.
\end{remark}

 So, the next result holds.
\begin{lemma}\label{lema3.11}
The following equalities are valid in  $\mathcal{H}_{\mu}$ and in $\mathcal{H'}_{\mu}$ for $m\in\N$ and $\lambda\in\C$.
\begin{itemize}
\item[(i)] $(-\nBZ+\lambda)^{m}\nHZ_{\mu} = \nHZ_{\mu}(\lambda+\|y\|^{2})^{m}$.
\item[Si] $\lambda>0$,
\item[(ii)] $\nHZ_{\mu} (-\nBZ+\lambda)^{-m} = (\lambda+\|y\|^{2})^{-m}\nHZ_{\mu}$
\item[(iii)] $\nHZ_{\mu}(-\nBZ(-\nBZ+\lambda)^{-1})^{m}=
\|y\|^{2m}(\lambda+\|y\|^{2})^{-m}\nHZ_{\mu}$
\end{itemize}
\end{lemma}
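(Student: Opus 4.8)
The plan is to prove the three identities by reducing them to the action of the Hankel transform on the multipliers $(\lambda+\|y\|^2)$ and $(\lambda+\|y\|^2)^{-1}$, using the base relations in Lemma \ref{lema3.10} (equivalently Lemma \ref{lema3.8} on $\mathcal{H}_{\mu}$) together with Remark \ref{lema3.9}, which guarantees all the relevant functions are multipliers so that every expression below is well defined on both $\mathcal{H}_{\mu}$ and $\mathcal{H'}_{\mu}$. Throughout, it is enough to argue on $\mathcal{H}_{\mu}$ and then transpose: since $\nHZ_{\mu}$, $\nBZ$ and multiplication by a multiplier are all continuous on $\mathcal{H}_{\mu}$ and extend to $\mathcal{H'}_{\mu}$ by transposition, an identity between such compositions on $\mathcal{H}_{\mu}$ passes to $\mathcal{H'}_{\mu}$ automatically.

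For (i), I would proceed by induction on $m$. The case $m=1$ is immediate from Lemma \ref{lema3.10}(i): $(-\nBZ+\lambda)\nHZ_{\mu}\phi = -\nBZ\nHZ_{\mu}\phi + \lambda\nHZ_{\mu}\phi = \|y\|^2\nHZ_{\mu}\phi + \lambda\nHZ_{\mu}\phi = \nHZ_{\mu}((\lambda+\|y\|^2)\phi)$, where I have used that multiplication by $\|y\|^2$ commutes appropriately as a multiplier (this is exactly Lemma \ref{lema3.10}(i) read as $\nBZ\nHZ_{\mu} = -\nHZ_{\mu}\|y\|^2$; note the excerpt states the $\mathcal{H'}_\mu$ version with the roles of $x,y$ as written, and the $\mathcal{H}_\mu$ version is Lemma \ref{lema3.8}). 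For the inductive step, apply $(-\nBZ+\lambda)$ to $(-\nBZ+\lambda)^{m}\nHZ_{\mu} = \nHZ_{\mu}(\lambda+\|y\|^2)^m$ and use the $m=1$ case with $(\lambda+\|y\|^2)^m\phi$ in place of $\phi$, which is legitimate because $(\lambda+\|y\|^2)^m$ is a multiplier and hence $(\lambda+\|y\|^2)^m\phi\in\mathcal{H}_{\mu}$.

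For (ii), assume $\lambda>0$ so that $(-\nBZ+\lambda)^{-1}$ exists (its existence and boundedness come from non-negativity of $-\nBZ$, or directly: by (i) with $m=1$, $(-\nBZ+\lambda)$ is carried by $\nHZ_{\mu}$ to multiplication by $(\lambda+\|y\|^2)$, which is invertible on $\mathcal{H}_{\mu}$ with inverse the multiplier $(\lambda+\|y\|^2)^{-1}$ by Remark \ref{lema3.9}). Concretely, from (i) with $m=1$ we get $\nHZ_{\mu} = (-\nBZ+\lambda)\nHZ_{\mu}(\lambda+\|y\|^2)^{-1}$; applying $(-\nBZ+\lambda)^{-1}$ on the left yields $(-\nBZ+\lambda)^{-1}\nHZ_{\mu} = \nHZ_{\mu}(\lambda+\|y\|^2)^{-1}$, i.e. $\nHZ_{\mu}(-\nBZ+\lambda)^{-1} = (\lambda+\|y\|^2)^{-1}\nHZ_{\mu}$ after composing with $\nHZ_{\mu}^{-1}$ on both sides and using $\nHZ_{\mu}^2=\mathrm{Id}$ (Remark \ref{TeoInv-nHZ en Hmu}); the general $m$ follows by iterating, or by raising the $m=1$ case to the $m$-th power. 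For (iii), write $-\nBZ(-\nBZ+\lambda)^{-1} = \mathrm{Id} - \lambda(-\nBZ+\lambda)^{-1}$ and conjugate by $\nHZ_{\mu}$ using (ii): this sends the operator to $1 - \lambda(\lambda+\|y\|^2)^{-1} = \|y\|^2(\lambda+\|y\|^2)^{-1}$ as a multiplier; taking $m$-th powers (all these multipliers commute, being functions of $\|y\|^2$) gives $\|y\|^{2m}(\lambda+\|y\|^2)^{-m}$, which is the claim.

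I do not expect a serious obstacle here; the only point requiring care is bookkeeping the exact form of the base identities in Lemma \ref{lema3.8}/\ref{lema3.10} (which variable carries the $\|\cdot\|^2$ factor under the transform) and making sure each intermediate function is genuinely a multiplier before applying it — both handled by Remark \ref{lema3.9}. The transpose argument to pass from $\mathcal{H}_{\mu}$ to $\mathcal{H'}_{\mu}$ is routine given that all operators involved are the transposes of their $\mathcal{H}_\mu$ counterparts.
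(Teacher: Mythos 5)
Your proposal is correct and follows essentially the same route as the paper: prove the identities on $\mathcal{H}_{\mu}$ by induction on $m$, with the base cases coming from the intertwining relations of Lemma \ref{lema3.8} and the identification of $(-\nBZ+\lambda)^{-1}$ with $\nHZ_{\mu}(\lambda+\|y\|^{2})^{-1}\nHZ_{\mu}$ (the paper verifies this two-sided inverse explicitly, which is the same content as your direct argument), and then pass to $\mathcal{H'}_{\mu}$ by transposition; your use of the identity $-\nBZ(-\nBZ+\lambda)^{-1}=\mathrm{Id}-\lambda(-\nBZ+\lambda)^{-1}$ in (iii) is only a cosmetic variant of the paper's direct computation. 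One small caution: do not justify the existence of $(-\nBZ+\lambda)^{-1}$ on $\mathcal{H}_{\mu}$ by non-negativity of $-\nBZ$ there, since the paper explicitly remarks that $-\nBZ$ is not non-negative in $\mathcal{H}_{\mu}$; your alternative direct argument via the multiplier $(\lambda+\|y\|^{2})^{-1}$ (Remark \ref{lema3.9}) is the correct justification and is exactly what the paper uses.
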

\begin{proof}
We refer the reader to page \pageref{proof-lema3.11} in the Appendix for details.
\end{proof}

\section{Non-negativity and fractional powers of similar operators}
In this section we include a brief review of non-negative operators in Banach spaces and locally convex spaces.
\medskip
Let $X$ be a Banach space (real or complex). Let $A$ be a closed linear operator $A:D(A)\subset X\to X$ and $\rho(A)$ the resolvent set of $A$. We say that $A$ is non-negative if $(-\infty,0)\subset \rho(A)$ and
\[
\sup_{\lambda>0}
\{\|\lambda(\lambda+A)^{-1}\|\}<\infty.
\]

\vspace{.2in}
Now, let $X$ is a locally convex space with a Hausdorff topology generated by a directed family of seminorms $\{\|\:\:\|_{\alpha}\}_{\alpha\in\Lambda}$. A family of linear operators $\{A_{\lambda}\}_{\lambda\in\Gamma}$, $A_{\lambda}:D(A_{\lambda})\subset X\to X$, is equicontinuous if for each $\alpha\in\Lambda$ there are $\beta=\beta(\alpha)\in\Lambda$ and a constant $C=C_{\alpha}\geq 0$ such that for all $\lambda\in\Gamma$
\[
\|A_{\lambda}\phi\|_{\alpha}
\leq C\|\phi\|_{\beta},
\quad \phi\in X.
\]
Under the above conditions, we say that a closed linear operator $A:D(A)\subset X\to X$ is non-negative if $(-\infty,0)\subset\rho(A)$ and the family of operators
\[
\{\lambda(\lambda+A)^{-1}\}_{\lambda>0}
\]
is equicontinuous.

\vspace{.2in}
Now, we will briefly describe the theory of fractional powers of operators. According to  \cite[Proposition 3.1.3]{MS01}, we can define the Balakrishnan operator $J^{\alpha}$ in the following way.

\bigskip
Let $A$ be a non-negative operator in a Banach space or in a locally convex and sequentially complete space. Let
$\alpha\in\C_{+}$ and $n>\Re\alpha$, $n\in\N$. If $\phi\in D(A^{n})$ and $m\geq n$ is a positive integer, then
\begin{equation}\label{prop4.1-eq1}
J^{\alpha}\phi=
\frac{\Gamma(m)}{\Gamma(\alpha)\Gamma(m-\alpha)}
\int_{0}^{\infty}
\lambda^{\alpha-1}\big[A(\lambda+A)^{-1}\big]^{m}\phi\:d\lambda.
\end{equation}

\bigskip
If $A$ is bounded, $J_{A}^{\alpha}$ can be considered as the fractional power of $A$. In other cases we can consider the following representation for the fractional power stated in \cite[Theorem 5.2.1]{MS01}
\begin{theorem}
Let $A$ be a non-negative operator, $\alpha\in\C_{+}$, $\lambda\in\rho(-A)$ and $n\in\N$. Then
\begin{equation}\label{Theorem 5.2.1-MS01}
A^{\alpha}=(A+\lambda)^{n}J_{A}^{\alpha}(A+\lambda)^{-n}.
\end{equation}
(If $n>\Re\alpha$, the operator $\overline{J_{A}^{\alpha}}$ can be replaced by $J_{A}^{\alpha}$ in the preceding formula.)
\end{theorem}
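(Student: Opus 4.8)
The plan is to show that the right-hand side of \eqref{Theorem 5.2.1-MS01}, read as an unbounded operator together with its natural domain, is independent of the auxiliary integer $n$ and of the point $\lambda\in\rho(-A)$, and then to identify this common operator with $A^{\alpha}$. Throughout I would use that, by \eqref{prop4.1-eq1}, $J_{A}^{\alpha}$ is assembled from the \emph{bounded} operators $A(\lambda+A)^{-1}=I-\lambda(\lambda+A)^{-1}$, and that $(A+\lambda)^{-1}$ is a bounded linear operator on $X$ for every $\lambda\in\rho(-A)$; in particular this holds for all $\lambda>0$, since $A$ is non-negative.

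\emph{Step 1 (commutation).} The first and most delicate point is to establish the commutation properties of the Balakrishnan operator: for $\mu\in\rho(-A)$ and $\phi\in D(A^{n})$ with $n>\Re\alpha$, $(A+\mu)^{-1}J_{A}^{\alpha}\phi=J_{A}^{\alpha}(A+\mu)^{-1}\phi$; and if moreover $\phi\in D(A^{n+1})$, then $J_{A}^{\alpha}\phi\in D(A)$ and $AJ_{A}^{\alpha}\phi=J_{A}^{\alpha}A\phi$. Each follows by moving the bounded operator $(A+\mu)^{-1}$, respectively the closed operator $A$, inside the defining integral, using that resolvents of $A$ commute among themselves and that $A$ commutes with each $[A(\lambda+A)^{-1}]^{m}$ on $D(A)$; in the locally convex, sequentially complete setting used later in the paper, the integral and these manipulations are legitimate by completeness. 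Iterating, $J_{A}^{\alpha}$ maps $D(A^{n+k})$ into $D(A^{k})$ and commutes there with $(A+\mu)^{-k}$. This is the workhorse for everything that follows, and the step where the most care with domains and with convergence of the Balakrishnan integral is needed.

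\emph{Step 2 (independence of $n$).} For $\psi\in D(A^{n+1})$, Step 1 gives $(A+\lambda)J_{A}^{\alpha}(A+\lambda)^{-1}\psi=(A+\lambda)(A+\lambda)^{-1}J_{A}^{\alpha}\psi=J_{A}^{\alpha}\psi$; inserting this into the composition and using that $(A+\lambda)^{-n}$ has range in $D(A^{n})$ shows that $(A+\lambda)^{n+1}J_{A}^{\alpha}(A+\lambda)^{-(n+1)}$ and $(A+\lambda)^{n}J_{A}^{\alpha}(A+\lambda)^{-n}$ coincide, domains included. Iterating, the operator is the same for every integer $n>\Re\alpha$; the extension to arbitrary $n\in\N$ is obtained by replacing $J_{A}^{\alpha}$ with its closure $\overline{J_{A}^{\alpha}}$, exactly as in the parenthetical remark of the statement, using that $\overline{J_{A}^{\alpha}}$ and $J_{A}^{\alpha}$ agree on $D(A^{n})$ once $n>\Re\alpha$.

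\emph{Step 3 (independence of $\lambda$, and conclusion).} Given $\lambda,\mu\in\rho(-A)$, put $C=(A+\mu)^{n}(A+\lambda)^{-n}$. From $(A+\mu)(A+\lambda)^{-1}=I+(\mu-\lambda)(A+\lambda)^{-1}$ and the commutation of resolvents one checks $C=\bigl(I+(\mu-\lambda)(A+\lambda)^{-1}\bigr)^{n}$, so $C$ is bounded with bounded inverse $\bigl(I+(\lambda-\mu)(A+\mu)^{-1}\bigr)^{n}$, commutes with every resolvent of $A$, and---by Step 1---with $J_{A}^{\alpha}$. Using $(A+\mu)^{n}=C(A+\lambda)^{n}$ on $D(A^{n})$ and $(A+\mu)^{-n}=(A+\lambda)^{-n}C^{-1}$, one obtains $(A+\mu)^{n}J_{A}^{\alpha}(A+\mu)^{-n}=C\,(A+\lambda)^{n}J_{A}^{\alpha}(A+\lambda)^{-n}\,C^{-1}=(A+\lambda)^{n}J_{A}^{\alpha}(A+\lambda)^{-n}$, the last equality because $C$ commutes with the middle operator. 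Hence the right-hand side of \eqref{Theorem 5.2.1-MS01} depends on neither $n$ nor $\lambda$, so it equals the value obtained for any convenient choice; with the definition of $A^{\alpha}$ adopted in \cite{MS01} this value is $A^{\alpha}$, and if one instead starts from the definition of $A^{\alpha}$ via the closure of the Balakrishnan integral for $0<\Re\alpha<1$ together with the additivity law, one checks directly that $(A+\lambda)^{n}J_{A}^{\alpha}(A+\lambda)^{-n}$ satisfies those defining relations. The main obstacle, as flagged in Step 1, is the domain and closure bookkeeping---knowing precisely where $J_{A}^{\alpha}$ is defined and continuous, that the compositions are closable, and that passing to closures respects the commutation identities---all of which is carried out in \cite{MS01}, which we may simply invoke.
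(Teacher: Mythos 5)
The paper itself does not prove this statement: it is quoted verbatim from \cite[Theorem 5.2.1]{MS01} and used as a black box, so there is no internal proof to measure your argument against. Judged on its own terms, your proposal does contain a genuinely useful reduction: Steps 1--3 are the standard resolvent-commutation argument showing that the operator $(A+\lambda)^{n}J_{A}^{\alpha}(A+\lambda)^{-n}$, taken with its natural domain, does not depend on $n$ (for $n>\Re\alpha$, and for general $n\in\N$ after passing to $\overline{J_{A}^{\alpha}}$) nor on $\lambda\in\rho(-A)$, and that outline is sound modulo the domain bookkeeping you yourself flag.

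The genuine gap is at the decisive point, namely the identification of this common operator with $A^{\alpha}$. You never fix which definition of $A^{\alpha}$ you are working with: the sentence ``with the definition of $A^{\alpha}$ adopted in \cite{MS01} this value is $A^{\alpha}$'' is circular as a proof of the very representation theorem being cited, and the alternative route you mention (starting from the closure of the Balakrishnan operator for $0<\Re\alpha<1$ together with additivity, and ``checking directly'' that $(A+\lambda)^{n}J_{A}^{\alpha}(A+\lambda)^{-n}$ satisfies those defining relations) is exactly the nontrivial content of the theorem and is only gestured at, not carried out. The same applies to the closability of $J_{A}^{\alpha}$ (needed even to state the case $n\leq\Re\alpha$), the convergence of the Balakrishnan integral in the sequentially complete locally convex setting, and the compatibility of the commutation identities with passage to closures, all of which you explicitly ``invoke'' from \cite{MS01}. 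Since those are precisely the points the theorem encapsulates, what you have written is an outline that defers the essential content back to the reference; as a self-contained proof it is incomplete, although as a citation-plus-sketch it is consistent with how the paper itself treats the result.
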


\bigskip
Similar operators has been described in the introduction. Let $A$ and  $B$ similar operators and $T$ the isomorphism that verifies \eqref{OpSimilares} then
\[(zId-B)^{-1}=T(zId-A)^{-1}T^{-1},\]
\noindent for $z$ a complex number, from where we deduce immediately that  $A$ is non-negative operator if and only if so is $B$.

\bigskip
When two operators are similar, the fractional powers also meet this property. Thus we have the following result which holds in Banach spaces and in locally convex and sequentially complete spaces.

\begin{proposition}\label{prop-similares Banach}
Let  $A$ and $B$ be similar non-negative operators. If $\alpha\in\C_{+}$ then
\begin{equation}\label{prop4.1-1}
    J_{B}^{\alpha}=T J_{A}^{\alpha}T^{-1},
\end{equation}
and
\begin{equation}\label{prop4.1-2}
    B^{\alpha}=TA^{\alpha}T^{-1},
\end{equation}
where $T$ is the isometric isomorphism that verifies $B=TAT^{-1}$.
\end{proposition}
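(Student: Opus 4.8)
The plan is to work directly from the Balakrishnan integral representation \eqref{prop4.1-eq1} and exploit the fact that the similarity $B = TAT^{-1}$ propagates through resolvents. First I would record the basic algebraic consequence of the definition of similarity: since $T$ is an isomorphism with $D(B) = \{x : T^{-1}x \in D(A)\}$ and $B = TAT^{-1}$, one has $D(B^n) = T(D(A^n))$ for every $n \in \N$, and for $z \in \rho(-A) = \rho(-B)$ the identity $(zId+B)^{-1} = T(zId+A)^{-1}T^{-1}$ holds (already observed in the excerpt). Iterating, $[B(\lambda+B)^{-1}]^m = T[A(\lambda+A)^{-1}]^m T^{-1}$ for all $\lambda > 0$ and $m \in \N$.

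Next I would fix $\alpha \in \C_+$, choose $n > \Re\alpha$ and $m \geq n$, and take $\psi \in D(B^n)$, so that $\phi := T^{-1}\psi \in D(A^n)$. Applying $T^{-1}$ to the integrand defining $J_B^\alpha\psi$ and using the conjugation identity above, the integrand becomes $\lambda^{\alpha-1} T^{-1}[B(\lambda+B)^{-1}]^m \psi = \lambda^{\alpha-1}[A(\lambda+A)^{-1}]^m \phi$. The constant $\frac{\Gamma(m)}{\Gamma(\alpha)\Gamma(m-\alpha)}$ is the same on both sides. The one genuine point to check is that $T^{-1}$ commutes with the (Bochner, or in the locally convex case, improper Riemann) integral; this follows from continuity of $T^{-1}$ together with the fact that the integral converges in $X$ (resp.\ is an element of the sequentially complete space), since continuous linear maps pass through such integrals. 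This yields $T^{-1}J_B^\alpha\psi = J_A^\alpha(T^{-1}\psi)$, i.e.\ \eqref{prop4.1-1}, and incidentally that $D(J_B^\alpha) = T(D(J_A^\alpha))$; the independence of the right-hand side of \eqref{prop4.1-eq1} from $m \geq n$ (cited from \cite{MS01}) guarantees the construction is well posed on both sides simultaneously.

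Finally, for \eqref{prop4.1-2} I would feed \eqref{prop4.1-1} into the representation \eqref{Theorem 5.2.1-MS01}. Pick $\lambda \in \rho(-A) = \rho(-B)$ and $n \in \N$; then
\begin{equation*}
B^\alpha = (B+\lambda)^n J_B^\alpha (B+\lambda)^{-n}
= \bigl(T(A+\lambda)^n T^{-1}\bigr)\bigl(T J_A^\alpha T^{-1}\bigr)\bigl(T(A+\lambda)^{-n}T^{-1}\bigr)
= T (A+\lambda)^n J_A^\alpha (A+\lambda)^{-n} T^{-1} = T A^\alpha T^{-1},
\end{equation*}
where the intermediate $T^{-1}T$ factors cancel and the last equality is again \eqref{Theorem 5.2.1-MS01} applied to $A$. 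The main obstacle, such as it is, is the careful bookkeeping of domains — verifying $D(B^n) = T(D(A^n))$ and that the Balakrishnan integral for $B$ converges exactly when the one for $A$ does, so that both sides of each displayed identity are simultaneously defined — rather than any analytic difficulty; the interchange of $T^{-1}$ with the integral is routine given continuity of $T^{-1}$.
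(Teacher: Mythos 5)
Your proof is correct and is exactly the standard conjugation argument this proposition rests on: the paper itself states the result without proof (deferring to \cite{Mo18} and the general theory in \cite{MS01}), and your steps --- transporting the resolvent identity to $[B(\lambda+B)^{-1}]^{m}=T[A(\lambda+A)^{-1}]^{m}T^{-1}$ and $D(B^{n})=T(D(A^{n}))$, passing $T^{-1}$ through the Balakrishnan integral by continuity, and then conjugating the representation \eqref{Theorem 5.2.1-MS01} --- supply precisely the expected details. Note only that isometry of $T$ is never needed (continuity of $T$ and $T^{-1}$ suffices), which is consistent with the general definition of similarity given earlier in the paper.
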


\section{Fractional powers of \texorpdfstring{$\nBZ$}{Smu}  in Lebesgue spaces}

Let $s$ and $r$ as in Section 2 and let $1\leq p<\infty$. We will denote by $S_{\mu,p}$ the part of $\nBZ$ in $L^{p}(\Rnm,sr^{p})$, that is to say, the operator $\nBZ$ with domain
\[
D(S_{\mu,p})=
\{f\in L^{p}(\Rnm,sr^{p}):
\nBZ f\in L^{p}(\Rnm,sr^{p})\}
\]
and given by $S_{\mu,p}f=\nBZ f$.

\medskip
Analogously, with $S_{\mu,\infty}$ we will denote the part of  $\nBZ$ in $L^{\infty}(\Rnm,r)$, $\Delta_{\mu,p}$ and  $\Delta_{\mu,\infty}$ the part of $\nBH$ in $L^{p}(\Rnm,s)$ and  $L^{\infty}(\Rnm)$ respectively.

\bigskip
\noindent Let $L_{r}$ the isometric isomorphism
\[
L_{r}: L^{p}(\Rnm,sr^{p})\to  L^{p}(\Rnm,s),
\quad\text{with}\quad
1\leq p<\infty
\]
(or $L_{r}: L^{\infty}(\Rnm,r)\to  L^{\infty}(\Rnm)$) given by  \[L_{r}(f)=rf.\]

Let
then
\[
S_{\mu,p}=
L_{r}^{-1}\circ
\Delta_{\mu,p}\circ L_{r}.
\]
\vspace{.2in}

Consequently it is enough to study the operator $\nBZ$ in the spaces $L^{p}(\Rnm,sr^{p})$ (or $L^{\infty}(\Rnm,r)$). In order to study the non-negativity of operators  $-S_{\mu,p}$ and $-S_{\mu,\infty}$ we consider the following function given by
\begin{equation}
\mathcal{N}_{\nu}(w)=
\int_{0}^{\infty}
e^{-t-\frac{w^{2}}{4t}}\:\frac{dt}{t^{\nu+1}}
\end{equation}
which is defined for all $\nu\in\R$ and $w\in\Rm$.

\bigskip
\noindent Let  $t\in\Rm$, if $\mu=(\mu_{1},\ldots,\mu_{n})$, then $t^{\mu+1}$ means
\[
t^{\mu+1}=
t^{\mu_{1}+1}\ldots t^{\mu_{n}+1}=
t^{\mu_{1}+\ldots+\mu_{n}+n},
\]
from where
\begin{equation}
\mathcal{N}_{\mu_{1}+\ldots+\mu_{n}+n-1}(\|x\|)
=\int_{0}^{\infty}
e^{-t-\frac{\|x\|^{2}}{4t}}\:\frac{dt}{t^{\mu_{1}+\ldots+\mu_{n}+n-1+1}}
=\int_{0}^{\infty}
e^{-t-\frac{\|x\|^{2}}{4t}}\:\frac{dt}{t^{\mu+1}}.
\end{equation}

\vspace{.2in}
\noindent Given $\lambda>0$, let us consider the function

\smallskip
\begin{equation}\label{nucleo}
N_{\lambda}(x)=
2^{-\mu-1}x^{\mu+1/2}\lambda^{\mu}\lambda^{n-1}
\mathcal{N}_{\mu_{1}+\ldots+\mu_{n}+n-1}(\|\sqrt{\lambda}x\|),
\quad x\in\Rnm.
\end{equation}

\medskip
\begin{lemma}\label{Lema4.3}
Given $\mu=(\mu_{1},\ldots,\mu_{n})$, $\mu_{i}>-\frac{1}{2}$ and $\lambda>0$ then
\begin{enumerate}
\item[(a)] $N_{\lambda}\in L^{1}(\Rnm,sr)$ and
\[\|N_{\lambda}\|_{L^{1}(\Rnm,sr)}=\frac{1}{\lambda}\]
\item[(b)]
\[\nHZ_{\mu}N_{\lambda}(y)=\frac{y^{\mu+1/2}}{\lambda+\|y\|^{2}}\]
\end{enumerate}
\end{lemma}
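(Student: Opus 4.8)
The plan is to prove both parts essentially by reducing the $n$-dimensional kernel $N_\lambda$ to a product of one-dimensional pieces, the way the Delsarte kernel $\DZ_\mu$ factors in \eqref{nDZ}, and by exploiting the integral representation of $\mathcal{N}_\nu$.

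First I would record the elementary gamma-integral identity that underlies the whole computation: for $\nu>-1$ and $w>0$,
\[
\int_0^\infty e^{-t-\frac{w^2}{4t}}\,\frac{dt}{t^{\nu+1}}
= \mathcal{N}_\nu(w),
\qquad
\int_0^\infty e^{-\lambda s}\,\mathcal{N}_\nu\!\big(\sqrt{s}\,b\big)\,s^{\nu}\,ds
= \frac{\Gamma(\nu+1)}{\big(\lambda+\tfrac{b^2}{4}\big)^{\nu+1}}\cdot(\text{const}),
\]
obtained by interchanging the order of integration (Tonelli, everything is nonnegative) and substituting. This is the one-dimensional Laplace-transform computation that produces the denominator $\lambda+\|y\|^2$ in part (b). I expect this Fubini/substitution bookkeeping — keeping precise track of the powers of $\lambda$, the factor $2^{-\mu-1}$, and the constant $C_\mu = 2^\mu\Gamma(\mu_1+1)\cdots\Gamma(\mu_n+1)$ — to be the main (though routine) obstacle; the conceptual content is light.

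For part (a), I would write, using \eqref{peso-s}, \eqref{peso-r} and \eqref{nucleo},
\[
\|N_\lambda\|_{L^1(\Rnm,sr)}
= \int_{\Rnm} N_\lambda(x)\,s(x)r(x)\,dx
= C_\mu^{-1}\int_{\Rnm} N_\lambda(x)\,x^{\mu+1/2}\,dx,
\]
then insert the definition of $N_\lambda$ and of $\mathcal{N}_{\mu_1+\cdots+\mu_n+n-1}$ as a $t$-integral, swap the $t$-integral with the $x$-integral by Tonelli, and perform the Gaussian integral in $x$ over $\ceroinf$ (which separates into $n$ one-dimensional Gaussians $\int_0^\infty x_i^{2\mu_i+1}e^{-\lambda x_i^2/(4t)}dx_i$, each evaluable in closed form via $\Gamma(\mu_i+1)$). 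The resulting remaining $t$-integral is $\int_0^\infty e^{-t}\,dt = 1$ after the $t$-powers cancel, and collecting the constants ($2^{-\mu-1}$, the $2^{\mu}$ from the Gaussians, the $\Gamma$ factors, and $C_\mu^{-1}$) together with the powers of $\lambda$ leaves exactly $1/\lambda$. I would double-check the exponent of $\lambda$ separately since that is where arithmetic slips are most likely.

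For part (b), I would start from the definition \eqref{nHZ} of $\nHZ_\mu$, so that
\[
\nHZ_\mu N_\lambda(y)
= \int_{\Rnm} N_\lambda(x)\prod_{i=1}^n\big\{\sqrt{x_iy_i}\,J_{\mu_i}(x_iy_i)\big\}\,dx .
\]
The prefactor $x^{\mu+1/2}$ in $N_\lambda$ combines with $\prod\sqrt{x_iy_i}\,J_{\mu_i}$ to produce, up to the known constant, the kernel $\prod (x_iy_i)^{-\mu_i}J_{\mu_i}(x_iy_i)\,x_i^{2\mu_i+1}$ of the Hirschman-type transform, so the integral factors as a product over $i$ of one-dimensional integrals, with the $\mathcal N$-factor written as its $t$-integral and pulled through by Tonelli. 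Each one-dimensional integral is a classical Weber–Schafheitlin / Hankel–Gaussian evaluation: $\int_0^\infty e^{-\lambda x^2/(4t)}J_{\mu_i}(x y_i)\,x^{\mu_i+1}\,dx$ is a known Gaussian-times-Bessel integral yielding an exponential in $y_i^2$. After assembling the product, the remaining $t$-integral is again of $\mathcal N_\nu$-type and evaluates, by the Laplace identity from the first paragraph, to $\Gamma(\text{sum}+1)/(\lambda+\|y\|^2)^{\text{sum}+1}$-type expression; matching the constants and powers of $\lambda$ with the prefactors in \eqref{nucleo} collapses everything to $y^{\mu+1/2}/(\lambda+\|y\|^2)$. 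Finally I would note that this is consistent with part (a) via $\nHZ_\mu$ applied at $y\to 0$ or, more properly, via Lemma~\ref{lema3.8}(i) applied formally to $N_\lambda$ interpreted as $(-\nBZ+\lambda)^{-1}$ of $x^{\mu+1/2}$, which is the structural reason the formula must hold; but the honest verification is the explicit computation above.
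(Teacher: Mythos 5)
Your proposal follows essentially the same route as the paper's proof: interchange the $t$-integral defining $\mathcal{N}$ with the $x$-integral by Tonelli, factor into one-dimensional integrals, use the Gaussian moment formula $\int_0^\infty e^{-\lambda x_i^2/(4t)}x_i^{2\mu_i+1}\,dx_i$ for (a) and the Gaussian--Bessel integral $\int_0^\infty e^{-\lambda x_i^2/(4t)}J_{\mu_i}(x_iy_i)\,x_i^{\mu_i+1}\,dx_i$ (the paper's Appendix formula from Oberhettinger) for (b), and finish with an elementary exponential integral in $t$ yielding $1/\lambda$, respectively $\lambda/(\lambda+\|y\|^2)$. The only cosmetic discrepancy is your anticipated Laplace identity with denominator $(\lambda+b^2/4)^{\nu+1}$: in the actual computation the powers of $t$ cancel and the last step is simply $\int_0^\infty e^{-t(1+\|y\|^2/\lambda)}\,dt$, which your hedged ``matching constants'' remark correctly collapses to the stated result.
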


\begin{proof}[Proof (a)]\phantom{\qedhere}
\begin{align*}
\|N_{\lambda}\|_{L^{1}(\Rnm,sr)}
&=
\int_{\Rnm}|N_{\lambda}(x)|\:\frac{x^{\mu+1/2}}{C_{\mu}} dx\\
&=
\int_{\Rnm} 2^{-\mu-1} x^{\mu+1/2}\lambda^{\mu}\lambda^{n-1}
\mathcal{N}_{\mu_{1}+\ldots+\mu_{n}+n-1}(\|\sqrt{\lambda}x\|)
\:\frac{x^{\mu+1/2}}{C_{\mu}} dx\\
&=
2^{-\mu-1}\lambda^{\mu}\lambda^{n-1}\frac{1}{C_{\mu}}
\int_{\Rnm}\left\{\int_{0}^{\infty}
e^{-t-\frac{\lambda\|x\|^{2}}{4t}}\frac{dt}{t^{\mu+1}}
\right\} x^{2\mu+1} dx\\
&=
2^{-\mu-1}\lambda^{\mu}\lambda^{n-1}\frac{1}{C_{\mu}}
\int_{0}^{\infty}\left\{\int_{\Rnm}
e^{-\frac{\lambda\|x\|^{2}}{4t}}x^{2\mu+1} dx
\right\} e^{-t}\frac{dt}{t^{\mu+1}}\\
&=
2^{-\mu-1}\lambda^{\mu}\lambda^{n-1}\frac{1}{C_{\mu}}
\int_{0}^{\infty} \prod_{i=1}^{n}\left\{\int_{0}^{\infty}
e^{-\frac{\lambda x_{i}^{2}}{4t}} x_{i}^{2\mu_{i}+1} dx_{i}
\right\} e^{-t}\frac{dt}{t^{\mu+1}}\\
&=
2^{-\mu-1}\lambda^{\mu}\lambda^{n-1}\frac{1}{C_{\mu}}
\int_{0}^{\infty} \prod_{i=1}^{n}\left\{
2^{\mu_{i}}\Gamma(\mu_{i}+1) \left(\frac{2t}{\lambda}\right)^{\mu_{i}+1}
\right\} e^{-t}\frac{dt}{t^{\mu+1}}\\
&=
2^{-\mu-1}\lambda^{\mu}\lambda^{n-1}\frac{1}{C_{\mu}}
2^{\mu+1}\lambda^{-\mu}\lambda^{-n}C_{\mu} \int_{0}^{\infty}
t^{\mu+1} e^{-t}\frac{dt}{t^{\mu+1}}=\frac{1}{\lambda}
\end{align*}
where we have used the formula \eqref{ApA-eq2}.
\end{proof}

\begin{proof}[Proof (b)]
\begin{align*}
\nHZ_{\mu} & N_{\lambda}(y)=
\int_{\Rnm} N_{\lambda}(x)
\prod_{i=1}^{n}\{\sqrt{x_{i}y_{i}} J_{\mu_{i}}(x_{i}y_{i})\} dx\\
&=
\int_{\Rnm}
2^{-\mu-1}x^{\mu+1/2}\lambda^{\mu}\lambda^{n-1}
\left\{\int_{0}^{\infty}
e^{-t-\frac{\lambda\|x\|^{2}}{4t}}
\frac{dt}{t^{\mu+1}}
\right\}
\prod_{i=1}^{n}
\{\sqrt{x_{i}y_{i}}J_{\mu_{i}}(x_{i}y_{i})\} dx\\
&=
2^{-\mu-1}y^{1/2}\lambda^{\mu}\lambda^{n-1}
\int_{0}^{\infty} \left\{
\int_{\Rnm}
x^{\mu+1}e^{-\frac{\lambda\|x\|^{2}}{4t}}
\prod_{i=1}^{n}
\{J_{\mu_{i}}(x_{i}y_{i})\}\:dx
\right\} e^{-t} \frac{dt}{t^{\mu+1}}\\
&=
2^{-\mu-1}y^{1/2}\lambda^{\mu}\lambda^{n-1}
\int_{0}^{\infty} \prod_{i=1}^{n} \left\{
\int_{0}^{\infty}
x^{\mu_{i}+1} e^{-\frac{\lambda x_{i}^{2}}{4t}}
J_{\mu_{i}}(x_{i}y_{i})\:dx_{i}
\right\} e^{-t} \frac{dt}{t^{\mu+1}}\\
&=
2^{-\mu-1}y^{1/2}\lambda^{\mu}\lambda^{n-1}
\int_{0}^{\infty} \prod_{i=1}^{n} \left\{
\left(\frac{\lambda}{2t}\right)^{-\mu_{i}-1} y_{i}^{\mu_{i}}
e^{-\frac{ty_{i}^{2}}{\lambda}}
\right\} e^{-t} \frac{dt}{t^{\mu+1}}\\
&=
2^{-\mu-1}y^{1/2}\lambda^{\mu}\lambda^{n-1}
2^{\mu+1}\lambda^{-\mu-1}y^{\mu}
\int_{0}^{\infty}
t^{\mu+1}
e^{-\frac{t\|y\|^{2}}{\lambda}} e^{-t}
\frac{dt}{t^{\mu+1}}\\
&=
y^{\mu+1/2}\lambda^{\mu}\lambda^{n-1}
\lambda^{-\mu}\lambda^{-n}
\int_{0}^{\infty}
e^{-t(1+\frac{\|y\|^{2}}{\lambda})}
\:dt\\
&=
y^{\mu+1/2}\lambda^{-1} \frac{\lambda}{\lambda+\|y\|^{2}}
\int_{0}^{\infty} e^{-s}\:ds\\
&=
\frac{y^{\mu+1/2}}{\lambda+\|y\|^{2}}
\end{align*}
where we have used \eqref{Prop3.14-eq1}.
\end{proof}

\begin{lemma}
Let $1\leq p<\infty$. If $f\in L^{p}(\Rnm,sr^{p})$ or $f\in L^{\infty}(\Rnm,r)$ then the following equality holds on $\mathcal{H'}_{\mu}$
\begin{equation}
\nHZ_{\mu}(N_{\lambda}\convZ f) =
\frac{1}{\lambda+\|y\|^{2}} \nHZ_{\mu}f
\end{equation}
\end{lemma}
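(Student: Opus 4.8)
The plan is to prove the identity first for test functions, where it follows at once from Lemma~\ref{lema3.4} and Lemma~\ref{Lema4.3}, and then to transfer it to $\mathcal{H'}_{\mu}$ by transposition, using the symmetry of the Delsarte kernel to handle the convolution in the dual pairing.

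First, note that by Lemma~\ref{lema3.3} the convolution $N_{\lambda}\convZ f$ belongs to $L^{p}(\Rnm,sr^{p})$ (resp. $L^{\infty}(\Rnm,r)$), hence by Lemma~\ref{lema3.2} it is a regular element of $\mathcal{H'}_{\mu}$ and $\nHZ_{\mu}(N_{\lambda}\convZ f)$ is a well-defined element of $\mathcal{H'}_{\mu}$, so the statement makes sense. For $g\in\mathcal{H}_{\mu}$ the identity is easy: since $\mathcal{H}_{\mu}\subset L^{1}(\Rnm,sr)$ by Lemma~\ref{lema3.1} and $N_{\lambda}\in L^{1}(\Rnm,sr)$ by Lemma~\ref{Lema4.3}(a), Lemma~\ref{lema3.4} gives $\nHZ_{\mu}(N_{\lambda}\convZ g)=r\,\nHZ_{\mu}(N_{\lambda})\,\nHZ_{\mu}(g)$, and Lemma~\ref{Lema4.3}(b) together with \eqref{peso-r} yields $r(y)\,\nHZ_{\mu}(N_{\lambda})(y)=(\lambda+\|y\|^{2})^{-1}$; hence $\nHZ_{\mu}(N_{\lambda}\convZ g)=(\lambda+\|y\|^{2})^{-1}\nHZ_{\mu}g$ for $g\in\mathcal{H}_{\mu}$. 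Since $(\lambda+\|\cdot\|^{2})^{-1}$ is a multiplier of $\mathcal{H}_{\mu}$ (Remark~\ref{lema3.9}), the right-hand side lies in $\mathcal{H}_{\mu}$; applying $\nHZ_{\mu}$ and the inversion formula (Remark~\ref{TeoInv-nHZ en Hmu}) shows $N_{\lambda}\convZ g\in\mathcal{H}_{\mu}$, and choosing $g=\nHZ_{\mu}\phi$ with $\phi\in\mathcal{H}_{\mu}$ gives $N_{\lambda}\convZ\nHZ_{\mu}\phi=\nHZ_{\mu}\!\big((\lambda+\|\cdot\|^{2})^{-1}\phi\big)\in\mathcal{H}_{\mu}$.

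The structural input needed for the dual pairing is that $\DZ_{\mu}(x,y,z)=\prod_{i}D_{\mu_{i}}(x_{i},y_{i},z_{i})$ is symmetric under permutations of $x,y,z$, since each factor depends on its three arguments only through their product and the area of the triangle they span. Combined with Fubini's theorem this gives the self‑transpose identity $\int_{\Rnm}(N_{\lambda}\convZ f)(x)\,\psi(x)\,dx=\int_{\Rnm}f(z)\,(N_{\lambda}\convZ\psi)(z)\,dz$ for $\psi\in\mathcal{H}_{\mu}$. I expect this Fubini justification to be the only delicate point: the hypotheses are checked exactly as in the proof of Lemma~\ref{lema3.3}, namely one estimates $\int\DZ_{\mu}(x,y,z)|f(z)|\,dz$ by Proposition~\ref{prop-nDZ}(iii) (with Hölder's inequality when $1<p<\infty$), then integrates in $y$ using $\|N_{\lambda}\|_{L^{1}(sr)}=1/\lambda$, and finally in $x$ using $\psi\in\mathcal{H}_{\mu}\subset L^{1}(\Rnm,sr)$.

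With these two facts the conclusion is a formal chain. Fix $\phi\in\mathcal{H}_{\mu}$; then, using in turn the definition of $\nHZ_{\mu}$ on $\mathcal{H'}_{\mu}$, the self‑transpose identity with $\psi=\nHZ_{\mu}\phi$, the $\mathcal{H}_{\mu}$‑identity of the second paragraph, again the definition of $\nHZ_{\mu}$ on $\mathcal{H'}_{\mu}$, and the definition of multiplication by a multiplier,
\[
(\nHZ_{\mu}(N_{\lambda}\convZ f),\phi)=(N_{\lambda}\convZ f,\nHZ_{\mu}\phi)=(f,N_{\lambda}\convZ\nHZ_{\mu}\phi)=\Big(f,\nHZ_{\mu}\big(\tfrac{1}{\lambda+\|\cdot\|^{2}}\phi\big)\Big)=\Big(\nHZ_{\mu}f,\tfrac{1}{\lambda+\|\cdot\|^{2}}\phi\Big)=\Big(\tfrac{1}{\lambda+\|y\|^{2}}\nHZ_{\mu}f,\phi\Big).
\]
As $\phi\in\mathcal{H}_{\mu}$ is arbitrary, the asserted equality holds in $\mathcal{H'}_{\mu}$. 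The main obstacle is purely technical, the Fubini argument of the third paragraph; all the analytic content is already packaged in Lemmas~\ref{lema3.4} and \ref{Lema4.3} and the inversion theorem.
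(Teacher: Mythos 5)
Your proposal is correct and follows essentially the same route as the paper's proof: the transpose identity $\int (N_{\lambda}\convZ f)\psi = \int f\,(N_{\lambda}\convZ\psi)$ justified by Fubini with the bounds of Lemma~\ref{lema3.3} and Proposition~\ref{prop-nDZ}(iii), the identity $N_{\lambda}\convZ\nHZ_{\mu}\phi=\nHZ_{\mu}\bigl((\lambda+\|\cdot\|^{2})^{-1}\phi\bigr)$ obtained from Lemmas~\ref{lema3.4} and~\ref{Lema4.3}(b) together with the inversion theorem and the multiplier property, and then the same duality chain. The only differences (making the symmetry of $\DZ_{\mu}$ explicit, and proving the $\mathcal{H}_{\mu}$-identity for general $g$ before specializing to $g=\nHZ_{\mu}\phi$) are organizational, not mathematical.
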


\begin{proof}
Suppose that  $f\in L^{p}(\Rnm,sr^{p})$ and  $\psi\in\mathcal{H}_{\mu}$, we claim that
\begin{equation}\label{Lema4.4-eq1}
 \int_{\Rnm}
 (N_{\lambda}\convZ f)(x)\psi(x)\;dx
 =
 \int_{\Rnm}
 f(z)(N_{\lambda}\convZ \psi)(z)\;dz
\end{equation}

\begin{equation}\label{Lema4.4-eq2}
 \int_{\Rnm}
 f(z)(N_{\lambda}\convZ \psi)(z)\;dz=
 \int_{\Rnm} f(z)
 \left\{
 \int_{\Rnm}\int_{\Rnm}
 N_{\lambda}(y)\psi(x)\;\DZ_{\mu}(x,y,z)\;dy\;dx
 \right\}dz.
\end{equation}

\noindent Let us see that $ \int_{\Rnm} |f(z)|
\left\{
\int_{\Rnm}\int_{\Rnm}
|N_{\lambda}(y)|\;|\psi(x)|\;\DZ_{\mu}(x,y,z)\;dy\;dx
\right\}dz$ is finite.

\noindent Let
\[G(z)=
\int_{\Rnm}\int_{\Rnm}
|N_{\lambda}(y)|\;|\psi(x)|\;\DZ_{\mu}(x,y,z)\;dy\;dx\]

\noindent and let $q$ such that $\frac{1}{p}+\frac{1}{q}=1$. The function $G$ is the convolution of  $|N_{\lambda}|$ and $|\psi|$. From Lemma \ref{lema3.3}, since $|N_{\lambda}|\in L^{1}(\Rnm,sr)$ and $|\psi|\in L^{q}(\Rnm,sr^{q})$ we have that $G\in L^{q}(\Rnm,sr^{q})$, then

\begin{align*}
\int_{\Rnm}
|f(z)|\;|G(z)|\;dz&=
\int_{\Rnm}
(r|f(z)|)\;(r^{-1}s^{-1}|G(z)|)\;s\;dz\\
&=
\int_{\Rnm}
(r|f(z)|)\;(C_{\mu}r|G(z)|)\;s\;dz\\
&=
C_{\mu}\int_{\Rnm}
|rf(z)|\;|rG(z)|\;s\;dz\\
&\leq
C_{\mu}\|rf\|_{L^{p}(\Rnm,s)}\|rG\|_{L^{q}(\Rnm,s)}\\
&=
C_{\mu}\|f\|_{L^{p}(\Rnm,sr^{p})}\|G\|_{L^{q}(\Rnm,sr^{q})}
\end{align*}
then it is possible to change the order of integration in \eqref{Lema4.4-eq2}.

\begin{align*}
\int_{\Rnm}f(z)\;(N_{\lambda}\convZ\psi)(z)\;dz
&=
\int_{\Rnm}
\left\{
\int_{\Rnm}\int_{\Rnm}
N_{\lambda}(y)\;f(z)\;\DZ_{\mu}(x,y,z)\;dy\;dz
\right\}\;\psi(x)\;dx\\
&=
\int_{\Rnm}
(N_{\lambda}\convZ f)(x)
\;\psi(x)\;dx\\
\end{align*}
So, we have proved \eqref{Lema4.4-eq2}.

\medskip
\noindent Now let  $f\in L^{\infty}(\Rnm,r)$. To see that \eqref{Lema4.4-eq1} holds, it will be enough to see that
\begin{align*}
\int_{\Rnm}&\left\{
\int_{\Rnm}\left\{
\int_{\Rnm}
|f(z)|\;|N_{\lambda}(y)|\;|\psi(x)|\;\DZ_{\mu}(x,y,z)\;dz
\right\}\;dy\right\}\;dx\\
&\leq
\|rf\|_{L^{\infty}(\Rnm)}
\int_{\Rnm}\left\{
\int_{\Rnm}
|N_{\lambda}(y)|\;|\psi(x)|\;
\left\{
\int_{\Rnm}
z^{\mu+1/2}\DZ_{\mu}(x,y,z)\;dz
\right\}\;dy\right\}\;dx\\
&= C_{\mu}\;\|f\|_{L^{\infty}(\Rnm,r)}
          \;\|N_{\lambda}\|_{L^{1}(\Rnm,sr)}
          \;\|\psi\|_{L^{1}(\Rnm,sr)}<\infty.
\end{align*}

\medskip
\noindent Let $\phi\in\mathcal{H}_{\mu}$ and $f\in L^{p}(\Rnm,sr)$ or $f\in L^{\infty}(\Rnm,r)$, from \eqref{Lema4.4-eq1} we have that
\begin{equation}\label{Lemma4.4-eq2}
(\nHZ_{\mu}(N_{\lambda}\convZ f),\phi) =
((N_{\lambda}\convZ f),\nHZ_{\mu}\phi) =
\int_{\Rnm} (N_{\lambda}\convZ f)(x)\;(\nHZ_{\mu}\phi)(x)\;dx=
\int_{\Rnm} f(z)\;(N_{\lambda}\convZ\nHZ_{\mu}\phi)(z)\;dz.
\end{equation}

\noindent From Lemma \ref{lema3.4},  Theorem \ref{TeoInv-nHZ} and item  (b) of Lemma \ref{Lema4.3} we obtain that
\[
\nHZ_{\mu}(N_{\lambda}\convZ \nHZ_{\mu}\phi)(y)=
r(\nHZ_{\mu}N_{\lambda})(\nHZ_{\mu}(\nHZ_{\mu}\phi))(y)=
y^{-\mu-1/2}\frac{y^{\mu+1/2}}{\lambda+\|y\|^{2}}\phi(y)=
\frac{\phi(y)}{\lambda+\|y\|^{2}}.
\]

\noindent Then
\begin{equation}\label{Lemma4.4-eq3}
N_{\lambda}\convZ \nHZ_{\mu}\phi=
\nHZ_{\mu}\left(\frac{\phi}{\lambda+\|y\|^{2}}\right).
\end{equation}

\noindent Finally, from \eqref{Lemma4.4-eq2} and \eqref{Lemma4.4-eq3} we obtain that for $\phi\in\mathcal{H}_{\mu}$ that

\begin{align*}
(\nHZ_{\mu}(N_{\lambda}\convZ f),\phi)
&=
\int_{\Rnm}
f(x)(N_{\lambda}\convZ\nHZ_{\mu}\phi)(x)\;dx\\
&=
\int_{\Rnm}
f(x)\;\nHZ_{\mu}\left(\frac{\phi}{\lambda+\|y\|^{2}}\right)(x)\;dx\\
&=
\int_{\Rnm}
\frac{1}{\lambda+\|x\|^{2}}
\nHZ_{\mu}f(x)\;\phi(x)\;dx\\
&=
\left(\frac{\nHZ_{\mu}f}{\lambda+\|x\|^{2}},\phi\right)
\end{align*}
\end{proof}

\begin{theorem}\label{Teorema4.7}
Given $\mu=(\mu_{1},\ldots,\mu_{n})$, $\mu_{i}>-\frac{1}{2}$, then $S_{\mu,p}$ and $S_{\mu,\infty}$ are closed and non-negative operators.
\end{theorem}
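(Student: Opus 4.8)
The plan is to identify, for every $\lambda>0$, the resolvent of $\nBZ$ on $L^{p}(\Rnm,sr^{p})$ (resp.\ on $L^{\infty}(\Rnm,r)$) with the Hankel convolution operator $f\mapsto N_{\lambda}\convZ f$, and then to read off the closedness of $S_{\mu,p}$ (resp.\ $S_{\mu,\infty}$) and the uniform resolvent bound needed for non-negativity directly from Lemmas~\ref{lema3.3} and~\ref{Lema4.3}. I only treat $L^{p}(\Rnm,sr^{p})$, $1\le p<\infty$; the case of $L^{\infty}(\Rnm,r)$ is identical, using part~(i) of Lemma~\ref{lema3.3} instead of part~(ii).

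First I would fix $\lambda>0$ and $f\in L^{p}(\Rnm,sr^{p})$ and set $g:=N_{\lambda}\convZ f$. Since $N_{\lambda}\in L^{1}(\Rnm,sr)$ with $\|N_{\lambda}\|_{L^{1}(\Rnm,sr)}=1/\lambda$ by Lemma~\ref{Lema4.3}(a), Lemma~\ref{lema3.3}(ii) shows that $g$ is well defined a.e., belongs to $L^{p}(\Rnm,sr^{p})$, and
\[
\|g\|_{L^{p}(\Rnm,sr^{p})}\le\|N_{\lambda}\|_{L^{1}(\Rnm,sr)}\,\|f\|_{L^{p}(\Rnm,sr^{p})}=\frac{1}{\lambda}\,\|f\|_{L^{p}(\Rnm,sr^{p})}.
\]
Regarding $f$ and $g$ as elements of $\mathcal{H'}_{\mu}$ via Lemma~\ref{lema3.2}, the Lemma immediately preceding this theorem gives $\nHZ_{\mu}g=(\lambda+\|y\|^{2})^{-1}\nHZ_{\mu}f$ in $\mathcal{H'}_{\mu}$. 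Multiplying by the $\mathcal{H'}_{\mu}$-multiplier $\lambda+\|y\|^{2}$ (Remark~\ref{lema3.9}) and using Lemma~\ref{lema3.10}(i) in the form $\nHZ_{\mu}\big((\lambda-\nBZ)g\big)=(\lambda+\|y\|^{2})\nHZ_{\mu}g$, I obtain $\nHZ_{\mu}\big((\lambda-\nBZ)g\big)=\nHZ_{\mu}f$ in $\mathcal{H'}_{\mu}$; since $\nHZ_{\mu}$ is an automorphism of $\mathcal{H'}_{\mu}$, hence injective, this yields $(\lambda-\nBZ)g=f$ in $\mathcal{H'}_{\mu}$. Reading this as $\nBZ g=\lambda g-f\in L^{p}(\Rnm,sr^{p})$ shows $g\in D(S_{\mu,p})$ and $(\lambda-S_{\mu,p})g=f$. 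The same computation gives the injectivity of $\lambda-S_{\mu,p}$: if $(\lambda-S_{\mu,p})h=0$ then $(\lambda+\|y\|^{2})\nHZ_{\mu}h=0$, so $\nHZ_{\mu}h=0$ and $h=0$. Hence, for every $\lambda>0$, $\lambda-S_{\mu,p}\colon D(S_{\mu,p})\to L^{p}(\Rnm,sr^{p})$ is a bijection with bounded inverse $R_{\lambda}f=N_{\lambda}\convZ f$, $\|R_{\lambda}\|\le\lambda^{-1}$.

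From here both conclusions are formal. Since $R_{\lambda}$ is a bounded linear bijection of the Banach space $L^{p}(\Rnm,sr^{p})$ onto $D(S_{\mu,p})$, its inverse $\lambda I-S_{\mu,p}$ is closed, and therefore so is $S_{\mu,p}$. Moreover, $\lambda-S_{\mu,p}$ being boundedly invertible for every $\lambda>0$ means $-\lambda\in\rho(-S_{\mu,p})$, so that $(-\infty,0)\subset\rho(-S_{\mu,p})$, and $\big(\lambda+(-S_{\mu,p})\big)^{-1}=(\lambda-S_{\mu,p})^{-1}=R_{\lambda}$; consequently $\sup_{\lambda>0}\big\|\lambda\big(\lambda+(-S_{\mu,p})\big)^{-1}\big\|=\sup_{\lambda>0}\|\lambda R_{\lambda}\|\le 1$, which is exactly the non-negativity of $-S_{\mu,p}$. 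The corresponding assertions for $S_{\mu,\infty}$ follow by running the same argument in $L^{\infty}(\Rnm,r)$ with Lemma~\ref{lema3.3}(i) in place of Lemma~\ref{lema3.3}(ii).

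I expect the one genuinely delicate step to be the identity $(\lambda-\nBZ)g=f$: it cannot be proved directly inside the Lebesgue space but must be obtained first in $\mathcal{H'}_{\mu}$, where $\nBZ$, $\nHZ_{\mu}$ and multiplication by $\lambda+\|y\|^{2}$ all act and $\nHZ_{\mu}$ is injective (Lemmas~\ref{lema3.10},~\ref{lema3.9} and the inversion theorem), and only afterwards does one recognise, from $\nBZ g=\lambda g-f$, that $g$ actually lies in the domain of the $L^{p}$-part $S_{\mu,p}$. Everything else is a routine application of the norm estimates in Lemmas~\ref{lema3.3} and~\ref{Lema4.3}, together with the elementary fact that the inverse of a bounded linear bijection between Banach spaces is a closed operator.
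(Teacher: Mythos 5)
Your proposal is correct and takes essentially the same route as the paper: the resolvent is identified with convolution by $N_{\lambda}$ via Lemmas~\ref{Lema4.3} and~\ref{lema3.3}, the identity $(\lambda-\nBZ)g=f$ is obtained in $\mathcal{H'}_{\mu}$ using the lemma preceding the theorem, Lemma~\ref{lema3.10}, the multiplier property of Remark~\ref{lema3.9} and the injectivity of $\nHZ_{\mu}$, and the estimate $\|\lambda(\lambda-S_{\mu,p})^{-1}\|\leq 1$ yields non-negativity, exactly as in the paper. The only (minor, and perfectly valid) divergence is in the closedness step: the paper observes that convergence in $L^{p}(\Rnm,sr^{p})$ and $L^{\infty}(\Rnm,r)$ implies convergence in $\mathcal{D'}(\Rnm)$, while you deduce closedness from the fact that $\lambda-S_{\mu,p}$ is the inverse of the bounded, everywhere-defined, injective operator $f\mapsto N_{\lambda}\convZ f$.
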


\begin{proof}  Since convergence in  $L^{\infty}(\Rnm,r)$ and $L^{p}(\Rnm,sr^p)$ implies convergence in  $\mathcal{D'}(\Rnm)$, then $S_{\mu,\infty}$ and $S_{\mu,p}$ are closed.

\medskip
Now let $\lambda>0$ and $f\in D(S_{\mu,\infty})$ such that  $(\lambda-S_{\mu,\infty})f=0$. So,
\[\nHZ_{\mu}(\lambda-S_{\mu,\infty})f=0\]
in $\mathcal{H'}_{\mu}$. By Lemma \ref{lema3.10} we obtain that
\[(\lambda+\|y\|^{2})\nHZ_{\mu}f=0\]
in  $\mathcal{H'}_{\mu}$ and hence by Lemma \ref{lema3.9}
\[\nHZ_{\mu}f=(\lambda+\|y\|^{2})^{-1}(\lambda+\|y\|^{2})\nHZ_{\mu}f=0.\]
Then, $f=0$ as element of $\mathcal{H'}_{\mu}$ and we conclude that  $f=0$ a.e. in $x\in\Rnm$ and  $\lambda-S_{\mu,\infty}$ is injective.

\medskip
Let $f\in L^{\infty}(\Rnm,r)$ and $g=N_{\lambda}\convZ f$. Then, by Lemma \ref{lema3.3} $g\in L^{\infty}(\Rnm,r)$ and
\[
\nHZ((\lambda-S_{\mu,\infty})g)=
(\lambda+\|y\|^{2})\nHZ_{\mu}g=
(\lambda+\|y\|^{2})\nHZ_{\mu}(N_{\lambda}\convZ f)=
\nHZ_{\mu}f.
\]

\noindent By injectivity of Hankel transform in $\mathcal{H'}_{\mu}$ we obtain that
\[(\lambda-S_{\mu,\infty})g=f,\]
so, $\lambda-S_{\mu,\infty}$ is onto. Also
\begin{align*}
\|(\lambda-S_{\mu,\infty})^{-1}f\|_{L^{\infty}(\Rnm,r)}
&=
\|g\|_{L^{\infty}(\Rnm,r)}=
\|N_{\lambda}\convZ f\|_{L^{\infty}(\Rnm,r)}\\
&\leq
\|N_{\lambda}\|_{L^{1}(\Rnm,sr)}\|f\|_{L^{\infty}(\Rnm,r)}\\
&=
\frac{1}{\lambda}\|f\|_{L^{\infty}(\Rnm,r)},
\end{align*}
hence
\[
\|\lambda(\lambda-S_{\mu,\infty})^{-1}f\|_{L^{\infty}(\Rnm,r)}
\leq \|f\|_{L^{\infty}(\Rnm,r)}\]
and $-S_{\mu,\infty}$ is non-negative.

\medskip
\noindent The proof of the non-negativity of $S_{\mu,p}$ is similar.

\end{proof}

\vspace{.2in}
Since we have proved that both  $-S_{\mu,p}$ and $-S_{\mu,\infty}$ are non-negative we can consider the fractional powers of them. If $\alpha\in\C$, $\Re(\alpha)>0$ and  $n>\Re(\alpha)$ then the fractional power of $-S_{\mu,\infty}$ can be represented from \eqref{Theorem 5.2.1-MS01} by:
\[(-S_{\mu,\infty})^{\alpha}=
(-S_{\mu,\infty}+1)^{n}\mathcal{J}_{\infty}^{\alpha}(-S_{\mu,\infty}+1)^{-n},
\]
where with $\mathcal{J}_{\infty}^{\alpha}$ we denote the Balakrishnan operator associated to $-S_{\mu,\infty}$ given by:
\[
\mathcal{J}_{\infty}^{\alpha}\phi=
\frac{\Gamma(n)}{\Gamma(\alpha)\Gamma(n-\alpha)}
\int_{0}^{\infty} \lambda^{\alpha-1}
[-S_{\mu,\infty}(\lambda-S_{\mu,\infty})^{-1}]^{n}\phi\;d\lambda,
\]
for $\alpha\in\C$, $0<\Re(\alpha)<n$ and $\phi\in D[(-S_{\mu,\infty})^{n}]$.

\medskip
Analogously for the representation of  fractional powers of $-S_{\mu,p}$.

\section{Non-negativity of Bessel operator \texorpdfstring{$\nBZ$}{Smu} in  the space \texorpdfstring{$\mathcal{B}$}{B}}

\begin{remark}
The operator $-\nBZ$ is not non-negative in $\mathcal{H}_{\mu}$.
\end{remark}
\noindent If $-\nBZ$ were non-negative in $\mathcal{H}_{\mu}$, since  $-\nBZ$ is continuous in $\mathcal{H}_{\mu}$, given $\alpha\in\C$, $0<\alpha<1$ and according to \eqref{prop4.1-eq1} and  \eqref{complementosEuler}, we have that fractional power $(-\nBZ)^{\alpha}$ would be given by
\begin{equation}\label{eq1-obs4.7}
(-\nBZ)^{\alpha}\phi=
\frac{\sin\alpha\pi}{\pi}
\int_{0}^{\infty}
\lambda^{\alpha-1}
(-\nBZ)(\lambda-\nBZ)^{-1}\phi\:d\lambda
\end{equation}
and  $D[(-\nBZ)^{\alpha}]=D(-\nBZ)=\mathcal{H}_{\mu}$. Applying the Hankel transform in  \eqref{eq1-obs4.7} we obtain

\begin{align*}
\nHZ_{\mu}(-\nBZ)^{\alpha}\phi
&=
\frac{\sin\alpha\pi}{\pi}
\int_{0}^{\infty}
\lambda^{\alpha-1}
\nHZ_{\mu}[(-\nBZ)(\lambda-\nBZ)^{-1}\phi]\:d\lambda\\
&=
\frac{\sin\alpha\pi}{\pi}
\int_{0}^{\infty}
\lambda^{\alpha-1} \|y\|^{2}(\lambda+\|y\|^{2})^{-1}
\nHZ_{\mu}\phi(y)\:d\lambda\\
&=
(\|y\|^{2})^{\alpha}\nHZ_{\mu}\phi(y),
\end{align*}
where we have interchanged the Bochner integral with the Hankel transform, and the we have applied item  $(iii)$ of  Lemma \ref{lema3.11} and \cite[Remark 3.1.1]{MS01}. This would imply that $(\|y\|^{2})^{\alpha}\nHZ_{\mu}\phi(y)\in\mathcal{H}_{\mu}$ which is not true in general.

\vspace{.5in}
Now we consider the Banach space  $Y=L^{1}(\Rnm,sr)\cap L^{\infty}(\Rnm,r)$, with norm
\[
\|f\|_{Y}=
\max\bigl\{
\|f\|_{L^{1}(\Rnm,sr)},
\|f\|_{L^{\infty}(\Rnm,r)}
\bigr\},
\]
and the part of the Bessel operator in $Y$, $(\nBZ)_{Y}$, with domain given by \[
D[(\nBZ)_{Y}]=
\{f\in Y:\: \nBZ f\in Y\}.
\]

\noindent From Theorem \ref{Teorema4.7} we have that $-(\nBZ)_{Y}$ is closed and non-negative.

\clearpage
\begin{proposition}\label{Prop4.7}
If $k>\frac{n}{2}$ then  $D[((\nBZ)_{Y})^{k+1}]\subset C_{0}(\Rnm)$.
\end{proposition}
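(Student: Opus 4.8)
The plan is to pass to the Hankel transform side, where the operator $-(\nBZ)_Y$ and its resolvent act as multiplication by $\lambda+\|y\|^2$ and $(\lambda+\|y\|^2)^{-1}$, and to use the explicit kernel $N_\lambda$ from Lemma \ref{Lema4.3} together with the regularity-improving properties of Hankel convolution recorded in Lemma \ref{lema3.3} and Proposition \ref{prop7.5 [Mo18]}. Concretely, if $f\in D[((\nBZ)_Y)^{k+1}]$, set $g=(1-(\nBZ)_Y)^{k+1}f\in Y$. Since $-(\nBZ)_Y$ is non-negative (Theorem \ref{Teorema4.7}) with $1\in\rho((\nBZ)_Y)$, one has $f=(1-(\nBZ)_Y)^{-(k+1)}g$, and I would like to identify this resolvent power with iterated Hankel convolution against $N_1$: using Lemma \ref{lema3.4}, Lemma \ref{Lema4.3}(b) and the inversion theorem (Remark \ref{TeoInv-nHZ en Hmu}), $N_1\convZ\cdots\convZ N_1$ ($k+1$ factors) has Hankel transform $y^{-\mu-1/2}\bigl(\nHZ_\mu N_1\bigr)^{k+1}\cdot(\dots)$, and matching against the resolvent identity $\nHZ_\mu\bigl((1-S_\mu)^{-(k+1)}g\bigr)=(1+\|y\|^2)^{-(k+1)}\nHZ_\mu g$ (from the Lemma following Lemma \ref{Lema4.3}, applied iteratively) gives
\[
f = \underbrace{N_1\convZ\cdots\convZ N_1}_{k+1}\convZ g.
\]

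Next I would analyze the smoothness of this $(k+1)$-fold convolution kernel. The point is that $\nHZ_\mu$ of the iterated convolution is, up to the factor $y^{-\mu-1/2}$, equal to $\bigl(\frac{1}{1+\|y\|^2}\bigr)^{k+1}$ times $y^{\mu+1/2}$-type factors; the relevant integrability is that $(1+\|y\|^2)^{-(k+1)}$ is in $L^1(\Rnm)$ precisely when $2(k+1)>n$, i.e. $k>\frac n2-1$, which is implied by $k>\frac n2$. Thus one can apply Proposition \ref{prop7.5 [Mo18]} (or the inversion theorem combined with Remark \ref{remark7.4 [Mo18]}) to conclude that the kernel $K:=N_1\convZ\cdots\convZ N_1$ satisfies $\nHZ_\mu K$ is controlled well enough that $K$, or rather the function one gets after stripping the weight, lies in $C_0(\Rnm)$, and in particular $K\in L^1(\Rnm,sr)$ as well. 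Since $g\in Y\subset L^p$-type spaces, Lemma \ref{lema3.3} together with the continuity of Hankel convolution and the fact that convolving an $L^1(sr)$-function with something whose transform decays like an integrable function lands in $C_0$ will yield $f=K\convZ g\in C_0(\Rnm)$. Here I would invoke, as in the proof of Proposition \ref{prop7.5 [Mo18]}, the density of $\mathcal H_\mu$ in the relevant $L^1$ space and the uniform-convergence transfer of the $C_0$ property.

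The main obstacle, and the step requiring the most care, is making the identification $f=K\convZ g$ rigorous: the formal manipulations on the Hankel side are clean, but one must justify that $(1-(\nBZ)_Y)^{-(k+1)}$ really coincides with convolution against $N_1$ iterated $k+1$ times as operators on $Y$ (not merely on $\mathcal H_\mu$), which amounts to checking that both sides have the same Hankel transform in $\mathcal H'_\mu$ and then using injectivity of $\nHZ_\mu$ on $\mathcal H'_\mu$ (Lemma \ref{lema3.10} and Remark \ref{lema3.9}). The ingredients are exactly the Lemma after Lemma \ref{Lema4.3} (which gives $\nHZ_\mu(N_\lambda\convZ f)=(\lambda+\|y\|^2)^{-1}\nHZ_\mu f$ for $f$ in the relevant Lebesgue spaces) applied $k+1$ times, plus associativity of the Hankel convolution on $L^1(sr)$, which follows from Lemma \ref{lema3.4} and injectivity of the transform. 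Once that identity is in hand, the decay count $2(k+1)>n$ and the $C_0$-mapping property of $\nHZ_\mu$ finish the argument; the hypothesis $k>\frac n2$ is what guarantees this decay (with room to spare), so it enters precisely at the integrability step.
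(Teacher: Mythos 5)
Your proposal is correct in substance but follows a genuinely different route from the paper. The paper's proof is more direct: for $f\in D[((\nBZ)_{Y})^{k+1}]$ it notes that $f$ and $((\nBZ)_{Y})^{k}f$ lie in $Y\subset L^{1}(\Rnm)\cap L^{2}(\Rnm)$, so by Remark \ref{remark7.4 [Mo18]} and Lemma \ref{lema3.10} one gets the pointwise bound $|(1+\|y\|^{2k})\nHZ_{\mu}f|\leq M$; since $2k>n$ this forces $\nHZ_{\mu}f\in L^{1}(\Rnm)$, and then the inversion formula in $\mathcal{H'}_{\mu}$ together with Proposition \ref{prop7.5 [Mo18]} yields $f\in C_{0}(\Rnm)$ after modification on a null set --- no resolvents or convolution kernels appear. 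You instead write $f=(1-(\nBZ)_{Y})^{-(k+1)}g$ and realize the resolvent powers as iterated Hankel convolution with $N_{1}$, obtaining the decay $(1+\|y\|^{2})^{-(k+1)}$ for $\nHZ_{\mu}f$; this works (and in fact only needs $2(k+1)>n$, slightly less than the stated hypothesis), but it pays for the sharper decay with exactly the step you flag as delicate, namely identifying $(1-(\nBZ)_{Y})^{-(k+1)}$ with convolution against the $(k+1)$-fold kernel on $Y$ and justifying associativity of $\convZ$; note that associativity can be avoided altogether by applying the lemma following Lemma \ref{Lema4.3} iteratively to $N_{1}\convZ\bigl(N_{1}\convZ\cdots\convZ(N_{1}\convZ g)\bigr)$ instead of forming $K$ first. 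Two cautions: the boundedness you need on the transform side is $\nHZ_{\mu}g\in L^{\infty}(\Rnm)$, which comes from $g\in Y\subset L^{1}(\Rnm)$ and Remark \ref{remark7.4 [Mo18]}, not from the weighted bound $\nHZ_{\mu}g\in L^{\infty}(\Rnm,r)$ (the latter only gives $|\nHZ_{\mu}g|\leq C\,y^{\mu+1/2}$, and the extra factor $y^{\mu+1/2}$ would spoil the integrability count); and the assertion that convolving $g$ with a kernel whose transform is integrable ``lands in $C_{0}$'' is not backed by any lemma in the paper (Lemma \ref{lema3.3} only gives $L^{\infty}(\Rnm,r)$), so the safe conclusion is the transform-side one you also mention --- deduce $\nHZ_{\mu}f\in L^{1}(\Rnm)$, use $f\in L^{1}\cap L^{2}\subset\mathcal{H'}_{\mu}$, the inversion theorem, and Proposition \ref{prop7.5 [Mo18]} --- which is precisely how the paper finishes.
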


\begin{proof}
\[
D[((\nBZ)_{Y})^{k+1}]=
\{
\phi\in D[((\nBZ)_{Y})^{k}]:
\quad
((\nBZ)_{Y})^{k}\phi\in D[(\nBZ)_{Y}]
\}
\]

\noindent Let $f\in D[((\nBZ)_{Y})^{k+1}]$, then $f$ and $((\nBZ)_{Y})^{k}f$ are in  $D[(\nBZ)_{Y}]$.

\bigskip
\noindent From Lemma \ref{lema3.1} and  \eqref{prop7.5 [Mo18]-eq1} we have that
\[
L^{1}(\Rnm,sr)\cap L^{\infty}(\Rnm,r)
\subset
L^{1}(\Rnm)\cap L^{2}(\Rnm).
\]
Then $f$ and $((\nBZ)_{Y})^{k}f$ are in $L^{1}(\Rnm)$.  From Remark \ref{remark7.4 [Mo18]} we obtain that
$\nHZ_{\mu}f$ and $\nHZ_{\mu}((\nBZ)_{Y})^{k}f$ are in $L^{\infty}(\Rnm)$,
that is to say that there exist $M>0$ such that
\[
|(1+\|y\|^{2k})\:\nHZ_{\mu}f|
\leq M.
\]

\noindent Since for $k>\frac{n}{2}$,  $(1+\|y\|^{2k})^{-1}$ is integrable in $\Rnm$, then $\nHZ_{\mu}f\in L^{1}(\Rnm)$. Then, we have proved that $f\in D[((\nBZ)_{Y})^{k+1}]$, $f$ and $\nHZ_{\mu}f\in  L^{1}(\Rnm)\cap L^{2}(\Rnm)$.

\medskip
\noindent From Remark \ref{L^2 subset H'mu} we have that $L^{1}(\Rnm)\cap L^{2}(\Rnm)\subset \mathcal{H'}_{\mu}$ and from Remark \ref{TeoInv-nHZ en Hmu}
we have
\[
\nHZ_{\mu}(\nHZ_{\mu}f)(x)=f(x),
\quad\text{a.e}\quad x\in\Rnm,
\]
considering $f$ as a regular distribution in $\mathcal{H'}_{\mu}$. Since $\nHZ_{\mu}f\in L^{1}(\Rnm)$, then by Proposition \ref{prop7.5 [Mo18]} we have that   $f=g\quad\text{a.e.}$
in $\Rnm$ with $g\in C_{0}(\Rnm)$.
\end{proof}

\vspace{.2in}
We now consider the following space:
\begin{equation}\label{eqB}
\mathcal{B}=
\{f\in Y: \:(\nBZ)^{k}f\in Y
\quad\text{for}\quad
k=0,1,2,\ldots\}
=\bigcap\limits_{k=0}^{\infty}
D[((\nBZ)_{Y})^{k}],
\end{equation}
with seminorms
\[
\rho_{m}(f)=\max_{0\leq k\leq m}
\bigl\{
\|(\nBZ)^{k}f\|_{Y},
\quad m=0,1,2,\ldots
\bigr\}.
\]

\begin{remark}\label{Remark4.8 bis}
From proposition \ref{Prop4.7} is evident that  $\mathcal{B}\subset C_{0}(\Rnm)$. Moreover, from Lemma \ref{lema3.1} we obtain that  $\mathcal{B}\subset L^{p}(\Rnm,sr^{p})$ for all $1\leq p<\infty$, and considering that  $S_{\mu}$ is a continuous operator from  $\mathcal{H}_{\mu}$ in itself then $\mathcal{H}_{\mu}\subset\mathcal{B}$ and the topology of $\mathcal{H}_{\mu}$ induced by $\mathcal{B}$ is weaker than the usual topology generated by the seminorms given by  \eqref{Topologia H_mu}. In fact, from \eqref{lema3.1-eq5} and \eqref{lema3.1-eq6} we have that

\begin{equation}
\|\phi\|_{Y}\leq
C\{\gamma_{0,0}^{\mu}(\phi)+\gamma_{m,0}^{\mu}(\phi)\},
\quad \phi\in\mathcal{H}_{\mu}.
\end{equation}
for $m>2\mu_i+2$, $i=1,\ldots,n$ and by the continuity of  $S_{\mu}$ in $\mathcal{H}_{\mu}$ we deduce that given a seminorm $\rho_{m}$, there exists a finite set of seminorms  $\{\gamma_{m_i,k_i}^{\mu}\}_{i=1}^{r}$ and constants $c_1,\ldots,c_r$ such that

\[
\rho_{m}(\phi)\leq
\sum_{i=1}^{r} c_{i}\:
\gamma_{m_i,k_i}^{\mu}(\phi),
\quad \phi\in\mathcal{H}_{\mu}.
\]
\noindent From the density of $\mathcal{D}(\Rnm)$ in $\mathcal{B}$ we deduce the density of $\mathcal{H}_{\mu}$ in $\mathcal{B}$.
\end{remark}

\bigskip
We denote with $(\nBZ)_{\mathcal{B}}$ the part of Bessel operator $\nBZ$ in $\mathcal{B}$, so the domain of the operator $(\nBZ)_{\mathcal{B}}$ is $\mathcal{B}$ and the following result holds.

\begin{theorem}\label{Teorema4.8}
$\mathcal{B}$ is a Fréchet space and $-(\nBZ)_{\mathcal{B}}$  is a continous and non-negative operator on $\mathcal{B}$.
\end{theorem}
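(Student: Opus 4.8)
The plan is to verify the three assertions in turn: completeness of $\mathcal{B}$, continuity of $-(\nBZ)_{\mathcal{B}}$, and non-negativity of $-(\nBZ)_{\mathcal{B}}$. First I would establish that $\mathcal{B}$ is a Fréchet space. The seminorms $\rho_m$ are clearly increasing, so the topology is metrizable and locally convex; it is Hausdorff because $\rho_0(f)=\|f\|_Y$ already separates points. For completeness, take a Cauchy sequence $\{f_j\}$ in $\mathcal{B}$. For each fixed $k$, $\{(\nBZ)^k f_j\}$ is Cauchy in the Banach space $Y$, hence converges in $Y$ to some $g_k$; set $f=g_0$. The point is to identify $g_k$ with $(\nBZ)^k f$. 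This follows because convergence in $Y$ implies convergence in $\mathcal{D}'(\Rnm)$ (as noted in the proof of Theorem \ref{Teorema4.7}), so $(\nBZ)^k f_j\to (\nBZ)^k f$ in $\mathcal{D}'(\Rnm)$ while also $(\nBZ)^k f_j\to g_k$ in $\mathcal{D}'(\Rnm)$; uniqueness of distributional limits gives $(\nBZ)^k f=g_k\in Y$ for every $k$, so $f\in\mathcal{B}$ and $\rho_m(f_j-f)\to 0$. Thus $\mathcal{B}$ is complete, hence Fréchet.

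Next, continuity of $-(\nBZ)_{\mathcal{B}}$ on $\mathcal{B}$ is essentially built into the definition of the seminorms: for $f\in\mathcal{B}$ and any $m$,
\[
\rho_m\bigl((\nBZ)_{\mathcal{B}}f\bigr)=\max_{0\leq k\leq m}\|(\nBZ)^{k+1}f\|_Y=\max_{1\leq k\leq m+1}\|(\nBZ)^{k}f\|_Y\leq \rho_{m+1}(f),
\]
so $(\nBZ)_{\mathcal{B}}$ maps $\mathcal{B}$ into $\mathcal{B}$ continuously, and likewise $-(\nBZ)_{\mathcal{B}}$.

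The main work is the non-negativity of $-(\nBZ)_{\mathcal{B}}$, which by the definition recalled in Section 3 means showing $(-\infty,0)\subset\rho(-(\nBZ)_{\mathcal{B}})$ and that $\{\lambda(\lambda-(\nBZ)_{\mathcal{B}})^{-1}\}_{\lambda>0}$ is equicontinuous on $\mathcal{B}$. The strategy is to lift the Banach-space result of Theorem \ref{Teorema4.7}: for $\lambda>0$ and $f\in\mathcal{B}$, set $g=N_\lambda\convZ f$. Since $f\in Y$ and $N_\lambda\in L^1(\Rnm,sr)$, Lemma \ref{lema3.3} gives $g\in Y$ with $\|g\|_Y\leq\tfrac1\lambda\|f\|_Y$; moreover the computation in the proof of Theorem \ref{Teorema4.7} shows $(\lambda-\nBZ)g=f$. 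The crucial extra point is that $g$ actually lies in $\mathcal{B}$ and that $(-(\nBZ)_{\mathcal{B}})$ commutes appropriately with the resolvent. For this, I would observe that $(\nBZ)^k g=(\nBZ)^k(N_\lambda\convZ f)=N_\lambda\convZ(\nBZ)^k f$ — this commutation of $\nBZ$ with Hankel convolution holds on $\mathcal{H}'_\mu$ via the Hankel transform (Lemmas \ref{lema3.4}, \ref{lema3.10}, \ref{lema3.8}), since $\nHZ_\mu(N_\lambda\convZ(\nBZ)^k f)=(\lambda+\|y\|^2)^{-1}\nHZ_\mu((\nBZ)^k f)=\nHZ_\mu(\nBZ)^k(N_\lambda\convZ f)$. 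Because $(\nBZ)^k f\in Y$ for all $k$ (as $f\in\mathcal{B}$), Lemma \ref{lema3.3} again gives $N_\lambda\convZ(\nBZ)^k f\in Y$, so $(\nBZ)^k g\in Y$ for all $k$, i.e. $g\in\mathcal{B}$; injectivity of $\lambda-\nBZ$ on $\mathcal{H}'_\mu$ (established in Theorem \ref{Teorema4.7}) shows $(\lambda-(\nBZ)_{\mathcal{B}})^{-1}f=g$ is well defined. Finally, for equicontinuity, the same norm estimate applied at each level $k$ gives
\[
\rho_m\bigl(\lambda(\lambda-(\nBZ)_{\mathcal{B}})^{-1}f\bigr)
=\max_{0\leq k\leq m}\lambda\|N_\lambda\convZ(\nBZ)^k f\|_Y
\leq\max_{0\leq k\leq m}\|(\nBZ)^k f\|_Y=\rho_m(f),
\]
uniformly in $\lambda>0$, which is precisely the equicontinuity required. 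Hence $-(\nBZ)_{\mathcal{B}}$ is non-negative.

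The step I expect to be the main obstacle is the rigorous justification that $(\nBZ)^k$ commutes with the convolution operator $f\mapsto N_\lambda\convZ f$ on all of $Y$, and hence that $g=N_\lambda\convZ f\in\mathcal{B}$ whenever $f\in\mathcal{B}$; one must pass through the Hankel transform on $\mathcal{H}'_\mu$, check that all the relevant objects are regular elements of $\mathcal{H}'_\mu$ (via Lemma \ref{lema3.2} and Remark \ref{L^2 subset H'mu}), and invoke the multiplier properties from Remark \ref{lema3.9} and Lemma \ref{lema3.11}. Once this commutation and the membership $g\in\mathcal{B}$ are secured, the resolvent identity $(\lambda-(\nBZ)_{\mathcal{B}})g=f$ and the equicontinuity estimate are immediate consequences of the Banach-space estimates already proved.
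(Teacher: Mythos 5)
Your proposal is correct, but for the non-negativity part it follows a genuinely different route from the paper. The paper's own proof is essentially three lines: completeness of $\mathcal{B}$ is deduced from the seminorm $\rho_{0}$ together with the completeness of $L^{1}(\Rnm,sr)$ and $L^{\infty}(\Rnm,r)$; continuity of $(\nBZ)_{\mathcal{B}}$ from the identity $\rho_{m}(S_{\mu}\phi)=\rho_{m+1}(\phi)$; and the non-negativity of $-(\nBZ)_{\mathcal{B}}$ is obtained by citing the abstract result \cite[Proposition 1.4.2]{MS01}, which in this context yields that the part of the non-negative operator $-(\nBZ)_{Y}$ of Theorem \ref{Teorema4.7} in the Fr\'echet space $\bigcap_{k}D[((\nBZ)_{Y})^{k}]$, with the seminorms $\rho_{m}$, is again continuous and non-negative. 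You instead re-prove this abstract fact concretely: you identify the resolvent as convolution with $N_{\lambda}$, show via the Hankel transform on $\mathcal{H'}_{\mu}$ that it commutes with the powers $(\nBZ)^{k}$, conclude that it preserves $\mathcal{B}$, and obtain the uniform bound $\rho_{m}\bigl(\lambda(\lambda-(\nBZ)_{\mathcal{B}})^{-1}f\bigr)\leq\rho_{m}(f)$, which is exactly the required equicontinuity. Both arguments are valid. Note, however, that the commutation you single out as the main obstacle is available abstractly, with no Hankel-transform computation: for a closed operator with $\lambda$ in the resolvent set one has $(\lambda-A)^{-1}Af=A(\lambda-A)^{-1}f$ for $f\in D(A)$, so $(\lambda-(\nBZ)_{Y})^{-1}$ automatically maps $D[((\nBZ)_{Y})^{k}]$ into itself with $\|(\nBZ)^{k}(\lambda-(\nBZ)_{Y})^{-1}f\|_{Y}\leq\frac{1}{\lambda}\|(\nBZ)^{k}f\|_{Y}$; this is in effect the content of the proposition the paper cites. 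What your explicit route buys is self-containedness (no appeal to \cite{MS01}) and an explicit constant $1$ in the equicontinuity estimate; what the paper's citation buys is brevity. On the completeness part your argument, which identifies the limits of $(\nBZ)^{k}f_{j}$ through convergence in $\mathcal{D'}(\Rnm)$ and uniqueness of distributional limits, is in fact more detailed than the paper's one-sentence justification.
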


\begin{proof}
Let $\{\phi_{k}\}$ a Cauchy sequence in $\mathcal{B}$, then the convergence of $\{\phi_{k}\}$ follows  considering the seminorm $\rho_{0}$ and the completeness of $L^{1}(\Rnm,sr)$ and $L^{\infty}(\Rnm,r)$.

Since $\rho_{m}(S_{\mu}\phi)=\rho_{m+1}(\phi)$ then  $(\nBZ)_{\mathcal{B}}$ is continuous. %del operador de Bessel de $\mathcal{B}$ en $\mathcal{B}$.
The non-negativity follows from Proposition  1.4.2 in \cite{MS01}.

\end{proof}

\section{Non-negativity of Bessel operator \texorpdfstring{ $\nBZ$}{Smu} in the distributional space \texorpdfstring{$\mathcal{B'}$}{B}}

We will study the non-negativity of Bessel operator in the topological dual space of $\mathcal{B}$ with the strong topology, that is to say, the space   $\mathcal{B'}$ endowed with the topology generated by the family of seminorms  $\{|\cdot|_{B}\}$, where the sets $B$ are bounded sets in $\mathcal{B}$, and the seminorms are given by

\[
|T|_{B}=
\sup\limits_{\phi\in B} |(T,\phi)|,
\quad T\in\mathcal{B'}.
\]

\begin{proposition}\label{B' secuencialmente completo}
$\mathcal{B'}$ is sequentially complete.
\end{proposition}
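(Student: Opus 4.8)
The plan is to show that $\mathcal{B'}$ is sequentially complete by exploiting the fact that $\mathcal{B}$ is a Fréchet space (Theorem \ref{Teorema4.8}), hence barrelled, and then invoking the standard functional-analytic fact that the strong dual of a barrelled (indeed, of any Fréchet or more generally metrizable) space is sequentially complete. Concretely, let $\{T_j\}$ be a Cauchy sequence in $\mathcal{B'}$ with respect to the family of seminorms $|\cdot|_B$, $B$ ranging over bounded subsets of $\mathcal{B}$. First I would observe that for each fixed $\phi\in\mathcal{B}$, the singleton $\{\phi\}$ is bounded, so $\{(T_j,\phi)\}$ is a Cauchy sequence of scalars; define $T(\phi)=\lim_j (T_j,\phi)$. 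Linearity of $T$ is immediate from linearity of each $T_j$ and of the limit.

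The substantive point is to check that $T$ is continuous on $\mathcal{B}$, i.e. that $T\in\mathcal{B'}$. Here I would use the uniform boundedness principle in the Fréchet space $\mathcal{B}$: since $\{T_j\}$ is Cauchy for the strong topology, in particular it is bounded on every bounded subset $B$, so $\sup_j |T_j|_B<\infty$ for each such $B$; equivalently $\{T_j\}$ is a pointwise-bounded family of continuous linear functionals on the barrelled space $\mathcal{B}$, hence equicontinuous by Banach--Steinhaus. Equicontinuity gives a single seminorm $\rho_m$ and a constant $C$ with $|(T_j,\phi)|\le C\rho_m(\phi)$ for all $j$ and all $\phi\in\mathcal{B}$; passing to the limit in $j$ yields $|(T,\phi)|\le C\rho_m(\phi)$, so $T\in\mathcal{B'}$.

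Finally I would verify that $T_j\to T$ in the strong topology, not merely pointwise. Fix a bounded set $B\subset\mathcal{B}$ and $\eps>0$. By the Cauchy condition there is $N$ with $|T_j-T_k|_B<\eps$ for $j,k\ge N$, i.e. $\sup_{\phi\in B}|(T_j,\phi)-(T_k,\phi)|<\eps$. Letting $k\to\infty$ inside the supremum (the supremum of a family of functions each converging pointwise, all bounded by the uniform estimate) gives $\sup_{\phi\in B}|(T_j,\phi)-(T,\phi)|\le\eps$ for $j\ge N$, i.e. $|T_j-T|_B\le\eps$. Since $B$ and $\eps$ were arbitrary, $T_j\to T$ in $\mathcal{B'}$, proving sequential completeness.

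The main obstacle, such as it is, is purely one of citing the right general principle: one needs $\mathcal{B}$ barrelled to run Banach--Steinhaus, which is guaranteed because $\mathcal{B}$ is a Fréchet space by Theorem \ref{Teorema4.8}; everything else is the routine "pointwise limit plus equicontinuity forces continuity and strong convergence" argument. If one prefers to avoid invoking barrelledness explicitly, an alternative is to use completeness of each Banach space $D[((\nBZ)_Y)^k]$ together with the continuity of $(\nBZ)_{\mathcal{B}}$ to argue directly that the limit functional respects the seminorms $\rho_m$, but the Banach--Steinhaus route is cleaner.
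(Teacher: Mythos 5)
Your proposal is correct and follows essentially the same route as the paper: define $T$ as the pointwise limit on singletons, then use barrelledness of the Fr\'echet space $\mathcal{B}$ (Theorem \ref{Teorema4.8}) and the Banach--Steinhaus theorem to conclude $T\in\mathcal{B'}$, exactly as the paper does by citing the generalization in Schaefer. Your explicit final check that $T_j\to T$ in the strong topology is a detail the paper leaves implicit in the cited theorem, but it does not change the argument.
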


\begin{proof}
Let $\{T_{m}\}\subset\mathcal{B'}$ a Cauchy sequence, then for all bounded set $B\subset\mathcal{B}$,
\[
|T_{k}-T_{m}|_{B}\to 0,
\quad\text{for}\quad
k,m\to\infty,
\]
i.e, for all  $\eps>0$ there exists $N$ such that for all $k,m\geq N$ then
\[
|T_{k}-T_{m}|_{B}<\eps.
\]
So, in particular, since the unit sets $\{\phi\}\subset\mathcal{B}$ are bounded,
\[
|T_{k}-T_{m}|_{\{\phi\}}
=
|(T_{k}-T_{m},\phi)|
<\eps,
\quad\forall\:k,m\geq N,
\]
then $\{(T_{m},\phi)\}$ is a Cauchy sequence in $\C$, with which is convergent and there exists $T:\mathcal{B}\to\C$ such that
\[
(T,\phi)=
\lim\limits_{m\to\infty}(T_{m},\phi).
\]

\noindent Since $\mathcal{B}$ is barrelled, for being a Fréchet space, and from a generalization of the Banach–Steinhaus theorem  (see \cite[Theorem 4.7, pp.86]{Schaefer}), it has to $T\in\mathcal{B'}$.
\end{proof}

\bigskip
\begin{remark}\label{Remark4.9} $\quad L^{p}(\Rnm,sr^{p})$ and $L^{\infty}(\Rnm,r)$ are included in $\mathcal{B'},\quad(1\leq p<\infty)$.

\medskip
\noindent Let $f\in L^{p}(\Rnm,sr^{p})$, $\phi\in\mathcal{B}$ and $q$ such that $\frac{1}{p} + \frac{1}{q} = 1$, then
\begin{align}
\left|\int_{\Rnm}f(x)\phi(x)\:dx\right|
&=
\left|\int_{\Rnm}
f(x)\phi(x)s^{-1}(x)r^{-p}(x)s(x)r^{p}(x)\:dx\right|\nonumber\\
&\leq
\|f\|_{L^{p}(\Rnm,sr^{p})}
\|\phi\:s^{-1}r^{-p}\|_{L^{q}(\Rnm,sr^{p})}\label{Remark4.9-eq1}
\end{align}

\noindent and
\begin{align}
\|\phi\:s^{-1}r^{-p}\|_{L^{q}(\Rnm,sr^{p})}
&=
\left\{
\int_{\Rnm}
|\phi\:s^{-1}r^{-p}|^{q} sr^{p}
\right\}^{\frac{1}{q}}
=
\left\{
\int_{\Rnm}
|\phi|^{q}\:(C_{\mu}r^{2}r^{-p})^{q} sr^{p}
\right\}^{\frac{1}{q}}\nonumber\\
&=
C_{\mu}
\left\{
\int_{\Rnm}
|\phi|^{q}\:r^{2q-pq+p} s
\right\}^{\frac{1}{q}}
=
C_{\mu}
\left\{
\int_{\Rnm}
|\phi|^{q}\: sr^{q}
\right\}^{\frac{1}{q}}\label{Remark4.9-eq2}.
\end{align}
\end{remark}

\noindent Furthermore, from \eqref{lema3.1-eq4}
\[
\|\phi\|_{L^{q}(\Rnm,sr^{q})}
\leq
\left\{\|\phi\|_{L^{\infty}(\Rnm,r)}\right\}^{\frac{q-1}{q}}
\left\{\|\phi\|_{L^{1}(\Rnm,sr)}\right\}^{\frac{1}{q}},
\]

\noindent from where
\begin{equation}\label{Remark4.9-eq3}
\|\phi\|_{L^{q}(\Rnm,sr^{q})}
\leq \rho_{0}(\phi).
\end{equation}

\noindent Then, from \eqref{Remark4.9-eq1}, \eqref{Remark4.9-eq2} and \eqref{Remark4.9-eq3} we obtain that $f\in\mathcal{B'}$.

\medskip
Now, let $B$ a bounded set in $\mathcal{B}$, then
\[
|f|_{B}
=\sup_{\phi\in B} \left|\int_{\Rnm}f\phi\right|
\leq C_{\mu}\|f\|_{L^{p}(\Rnm,sr^{p})}
\sup_{\phi\in B} \|\phi\|_{L^{q}(\Rnm,sr^{q})}
\leq C_{\mu}\|f\|_{L^{p}(\Rnm,sr^{p})}
\sup_{\phi\in B}\:\rho_{0}(\phi)
\]

Thus, the topology in $L^{p}(\Rnm,sr^{p})$ induced by $\mathcal{B'}$ with the strong topology is weaker than the usual topology.

\begin{remark}
Since $\mathcal{H}_{\mu}$ is dense in $\mathcal{B}$ and since the topology of $\mathcal{H}_{\mu}$ induced by $\mathcal{B}$ is weaker than the generated by the seminorms given by \eqref{Topologia H_mu} then  $\mathcal{B'}\subset\mathcal{H'}_{\mu}$. Moreover, from the continuity of the Bessel operator in $\mathcal{B}$, we can consider $\nBZ$ in $\mathcal{B'}$ as the adjoint operators of $\nBZ$ in $\mathcal{B}$, that is to say
\[
(\nBZ T,\phi)=(T,\nBZ\phi),
\quad T\in\mathcal{B'},
\phi\in\mathcal{B},
\]
and we denote with $(\nBZ)_{\mathcal{B'}}$ the part of Bessel operator in $\mathcal{B'}$.
\end{remark}

\begin{theorem}\label{Teorema4.9}
The operator $-(\nBZ)_{\mathcal{B'}}$ is continuous and non-negative considering the strong topology in $\mathcal{B'}$.
\end{theorem}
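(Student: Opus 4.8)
The goal is to transfer the non-negativity of $-(\nBZ)_{\mathcal{B}}$ established in Theorem~\ref{Teorema4.8} to its adjoint $-(\nBZ)_{\mathcal{B'}}$ acting on the strong dual. The natural route is a duality/transposition argument: continuity of $-(\nBZ)_{\mathcal{B'}}$ on $\mathcal{B'}$ with the strong topology follows from continuity of $-(\nBZ)_{\mathcal{B}}$ on $\mathcal{B}$ (a continuous linear operator maps bounded sets to bounded sets, so its transpose is strongly continuous), and this is essentially immediate once one writes $|{(\nBZ)_{\mathcal{B'}}T}|_{B} = \sup_{\phi\in B}|(T,\nBZ\phi)| = |T|_{\nBZ(B)}$ and observes that $\nBZ(B)$ is bounded in $\mathcal{B}$ whenever $B$ is, by the identity $\rho_m(S_\mu\phi)=\rho_{m+1}(\phi)$ from the proof of Theorem~\ref{Teorema4.8}.

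\textbf{Non-negativity.} For the non-negativity I would first check that $(-\infty,0)\subset\rho(-(\nBZ)_{\mathcal{B'}})$. Fix $\lambda>0$. Since $-(\nBZ)_{\mathcal{B}}$ is non-negative on $\mathcal{B}$, the operator $(\lambda-\nBZ)$ is a bijection of $\mathcal{B}$ onto itself with continuous inverse $(\lambda-(\nBZ)_{\mathcal{B}})^{-1}=(\lambda+(-\nBZ)_{\mathcal{B}})^{-1}$. Transposing this isomorphism gives that $(\lambda-(\nBZ)_{\mathcal{B'}})$ is a bijection of $\mathcal{B'}$ with inverse equal to the transpose of $(\lambda-(\nBZ)_{\mathcal{B}})^{-1}$; this uses the relation $(\nBZ T,\phi)=(T,\nBZ\phi)$ and the fact that the transpose of an isomorphism is an isomorphism. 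So $(-\infty,0)\subset\rho(-(\nBZ)_{\mathcal{B'}})$.

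\textbf{Equicontinuity of the resolvents.} It remains to show $\{\lambda(\lambda+(-\nBZ)_{\mathcal{B'}})^{-1}\}_{\lambda>0}$ is equicontinuous on $\mathcal{B'}$ with the strong topology. Here I would use that this family is precisely the family of transposes of $\{\lambda(\lambda+(-\nBZ)_{\mathcal{B}})^{-1}\}_{\lambda>0}$, which by Theorem~\ref{Teorema4.8} is equicontinuous on $\mathcal{B}$: for each seminorm $\rho_m$ there are $\rho_{m'}$ and $C_m$ with $\rho_m\big(\lambda(\lambda+(-\nBZ)_{\mathcal{B}})^{-1}\phi\big)\le C_m\,\rho_{m'}(\phi)$ for all $\lambda>0$ and $\phi\in\mathcal{B}$. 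An equicontinuous family of operators on a Fréchet space maps each bounded set into a bounded set \emph{uniformly}; concretely, if $B\subset\mathcal{B}$ is bounded then $\bigcup_{\lambda>0}\lambda(\lambda+(-\nBZ)_{\mathcal{B}})^{-1}(B)=:\widetilde B$ is again bounded in $\mathcal{B}$. Consequently, for $T\in\mathcal{B'}$,
\[
\big|\lambda(\lambda+(-\nBZ)_{\mathcal{B'}})^{-1}T\big|_{B}
=\sup_{\phi\in B}\big|\big(T,\lambda(\lambda+(-\nBZ)_{\mathcal{B}})^{-1}\phi\big)\big|
\le \sup_{\psi\in\widetilde B}|(T,\psi)|=|T|_{\widetilde B},
\]
uniformly in $\lambda>0$, which is exactly equicontinuity of the resolvent family on $\mathcal{B'}$. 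Together with sequential completeness of $\mathcal{B'}$ (Proposition~\ref{B' secuencialmente completo}), this shows $-(\nBZ)_{\mathcal{B'}}$ is non-negative.

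\textbf{Main obstacle.} The only genuinely delicate point is the passage from equicontinuity of the resolvent family on $\mathcal{B}$ to the statement that it carries bounded sets into bounded sets uniformly; alternatively one can bypass this by invoking directly the duality result that the strong transpose of an equicontinuous family is equicontinuous (which in turn rests on $\mathcal{B}$ being barrelled, as already used in Proposition~\ref{B' secuencialmente completo}). Either way, once the functional-analytic duality machinery is set up the computation is routine, and I expect the author's proof to be short.
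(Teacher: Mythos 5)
Your proposal is correct and follows essentially the same route as the paper: continuity via $|(\nBZ)_{\mathcal{B'}}T|_{B}=|T|_{E}$ with $E=\nBZ(B)$ bounded, invertibility of $\lambda-(\nBZ)_{\mathcal{B'}}$ by transposing the isomorphism $\lambda-(\nBZ)_{\mathcal{B}}$, and equicontinuity of the resolvents from the bound $|\lambda(\lambda-(\nBZ)_{\mathcal{B'}})^{-1}T|_{B}\leq|T|_{D}$ with $D$ the (bounded) union of the images $\eta(\eta-(\nBZ)_{\mathcal{B}})^{-1}(B)$, $\eta>0$, which is exactly your $\widetilde B$. The step you flag as delicate is handled in the paper just as you suggest, as an immediate consequence of the equicontinuity of $\{\eta(\eta-(\nBZ)_{\mathcal{B}})^{-1}\}_{\eta>0}$ on $\mathcal{B}$.
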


\begin{proof}

Given a bounded set  $B\subset\mathcal{B}$ and  $T\in\mathcal{B'}$, then
\[
|(\nBZ)_{\mathcal{B'}}T|_{B}=
\sup\limits_{\phi\in B}
|((\nBZ)_{\mathcal{B'}}T,\phi)|=
\sup\limits_{\phi\in B}
|(T,(\nBZ)_{\mathcal{B}}\phi)|=
|T|_{E}
\]
where the set  $E=\{(\nBZ)_{\mathcal{B}}\phi:\:\phi\in B\}$ is also bounded. Then it follows that $(\nBZ)_{\mathcal{B'}}$ is continuous.

\bigskip
\noindent Let now $\lambda>0$ and $T\in\mathcal{B'}$. It is not difficult to see that the linear map $G:\psi\to (T,(\lambda-(\nBZ)_{\mathcal{B}})^{-1}\psi)$ is continuous and $(\lambda-(\nBZ)_{\mathcal{B'}})G=T$.
\noindent Therefore $(\lambda-(\nBZ)_{\mathcal{B'}})$ is surjective.

\bigskip
\noindent To prove the injectivity, let $T\in\mathcal{B'}$ be such that $(\lambda-(\nBZ)_{\mathcal{B'}})T=0$. Then, for all  $\phi\in\mathcal{B}$,
\[
((\lambda-(\nBZ)_{\mathcal{B'}})T,\phi)=
(T,(\lambda-(\nBZ)_{\mathcal{B}})\phi)=0,
\]
and thus $T=0$ as $R(\lambda-(\nBZ)_{\mathcal{B}})={\mathcal{B}}$, due to the fact that $-(\nBZ)_{\mathcal{B}}$ is a non-negative operator.

\bigskip
\noindent To see that  $(\lambda-(\nBZ)_{\mathcal{B'}})^{-1}$ is continuous let $T\in\mathcal{B'}$, $B\subset\mathcal{B}$ a bounded set and let us consider the set $F=\{(\lambda-(\nBZ)_{\mathcal{B}})^{-1}\phi:\:\phi\in B\}$, then
\[
|(\lambda-(\nBZ)_{\mathcal{B'}})^{-1}T|_{B}
=
|G|_{B}
=
\sup\limits_{\psi\in B}
|(G,\psi)|
=
\sup\limits_{\psi\in B}
|(T,(\lambda-(\nBZ)_{\mathcal{B}})^{-1}\psi)|_{B}
=
|T|_{F}.
\]

\noindent For every bounded set $B\subset\mathcal{B}$ and $T\in\mathcal{B'}$, since $-(\nBZ)_{\mathcal{B}}$ is non-negative, the set $D=\{\eta(\eta-(\nBZ)_{\mathcal{B}})^{-1}\phi:\:\phi\in B,\:\eta>0\}$ is also bounded and thus, for $\lambda>0$,
\begin{align*}
|\lambda(\lambda-(\nBZ)_{\mathcal{B'}})^{-1}T|_{B}
&=
\sup\limits_{\phi\in B}
|(\lambda(\lambda-(\nBZ)_{\mathcal{B'}})^{-1}T,\phi)|\\
&=
\sup\limits_{\phi\in B}
|(T,\lambda(\lambda-(\nBZ)_{\mathcal{B}})^{-1}\phi)|\\
&\leq
|T|_{D}.
\end{align*}
We now conclude that the operator $-(\nBZ)_{\mathcal{B'}}$ is non-negative.
\end{proof}

\begin{remark}
The operator $(\nBZ)_{\mathcal{B'}}$ is not injective because the function $x^{\mu+\frac{1}{2}}$ is solution of $\nBZ u=0$ and belongs to $\mathcal{B'}$, in fact
\[
|(x^{\mu+\frac{1}{2}},\phi)|
\leq
C_{\mu}\|\phi\|_{L^{1}(\Rnm,sr)}
\leq
C_{\mu}\rho_{0}(\phi),
\quad \phi\in\mathcal{B}.
\]
\end{remark}

\vspace{.2in}

According to the representation of fractional powers of operators in locally convex spaces given in  \cite{MS01}, it has to  $(-(\nBZ)_{\mathcal{B'}})^{\alpha}$ is given by
\[
(-(\nBZ)_{\mathcal{B'}})^{\alpha}T=
\frac{\Gamma(n)}{\Gamma(\alpha)\Gamma(n-\alpha)}
\int_{0}^{\infty}
\lambda^{\alpha-1}
[-(\nBZ)_{\mathcal{B'}}
(\lambda-(\nBZ)_{\mathcal{B'}})^{-1}
]^{n}\:T\:d\lambda.
\]
for $\Re\alpha>0$, $n>\Re\alpha$, $T\in\mathcal{B'}$.

\vspace{.3in}
From the general theory of fractional powers in sequentially complete locally convex spaces  (see \cite[pp.134]{MS01}), we deduce some properties of powers such as multiplicativity and

\begin{itemize}
\item[(1)] If $\Re\alpha>0$ then
\begin{equation}\label{Obs-1}
\left((-(\nBZ)_{\mathcal{B}})^{\alpha}\right)^{*}=
\left((-(\nBZ)_{\mathcal{B}})^{*}\right)^{\alpha}
\end{equation}

\noindent Since $(-(\nBZ)_{\mathcal{B}})^{*}=-(\nBZ)_{\mathcal{B'}}$ then from \eqref{Obs-1} we obtain the following duality formula
\[
( (-(\nBZ)_{\mathcal{B'}})^{\alpha}T , \phi)
=
( T, (-(\nBZ)_{\mathcal{B}})^{\alpha} \phi),
\quad \phi\in\mathcal{B},
        T\in\mathcal{B'}.
\]

\item[(2)] Since the usual topology in $L^{p}(\Rnm,sr^{p})$ is stronger than the topology induced by  $\mathcal{B'}$ we can deduce that
\[
((-(\nBZ)_{\mathcal{B'}})^{\alpha})_{L^{p}(\Rnm,sr^{p})}
=
(-(S_{\mu,p}))^{\alpha},
\]
for $\Re\alpha>0$, (see \cite[Theorem 12.1.6, pp.284]{MS01}).

\vspace{.2in}
This last property expresses a very desirable property in the theory of powers since it tells us that the restriction  of the distributional power of  $-\nBZ$ to $L^{p}(\Rnm,sr^{p})$  coincides with the power of $-\nBZ$ in $L^{p}(\Rnm,sr^{p})$.
\end{itemize}

\section{Distributional Liouville theorem for \texorpdfstring{$(-(S_{\mu}))^{\alpha}$}{Smualpha}.}

In this section we include the proof of Theorem \ref{Teorema5.1}. Before that, we will show the following Lemma.

\begin{lemma}\label{lemmamultpot}
Let $\psi\in\mathcal{H}_{\mu}$ such that $\supp\psi\subset \Rnm\cap\{x:\|x\|\geq a\}$ with $a>0$ and  $\alpha\in \C$ with $\Re\alpha>0$. Then  $\|x\|^{-2\alpha}\psi(x)\in\mathcal{H}_{\mu}$.
\end{lemma}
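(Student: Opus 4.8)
The plan is to show that $g(x) := \|x\|^{-2\alpha}\psi(x)$ lies in $\mathcal{H}_\mu$ by estimating the seminorms $\gamma^\mu_{m,k}(g)$ — equivalently $\sup_{x}|x^m T^k\{x^{-\mu-1/2}g(x)\}|$ — in terms of finitely many seminorms of $\psi$. Since $\psi$ is supported in $\{\|x\|\ge a\}$, so is $g$, and on this set the function $\|x\|^{-2\alpha}$ and all its derivatives are smooth and bounded; the support condition is exactly what prevents the singularity of $\|x\|^{-2\alpha}$ at the origin from causing trouble, so the only real issue is decay at infinity, which is controlled by the decay of $\psi$. The first step is to write $x^{-\mu-1/2}g(x) = \|x\|^{-2\alpha}\,(x^{-\mu-1/2}\psi(x))$ and apply the Leibniz-type formula \eqref{Leibniz} for $T^k$ to this product, reducing $T^k\{x^{-\mu-1/2}g\}$ to a finite sum of terms $T^{k-j}(\|x\|^{-2\alpha})\cdot T^j(x^{-\mu-1/2}\psi)$.

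The main technical step is therefore to control the factors $T^{k-j}(\|x\|^{-2\alpha})$ on the region $\{\|x\|\ge a\}$. First I would record that $\|x\|^{-2\alpha} = (x_1^2+\cdots+x_n^2)^{-\alpha}$ and that $T_i = x_i^{-1}\partial_{x_i}$ applied to a function of $\|x\|^2$ again produces a function of $\|x\|^2$: indeed if $F = \varphi(\|x\|^2)$ then $T_i F = 2\varphi'(\|x\|^2)$, so $T^{k-j}(\|x\|^{-2\alpha})$ is a constant (depending on $\alpha$ and the multi-index) times $\|x\|^{-2\alpha-2|k-j|}$. On $\{\|x\|\ge a\}$ we have $|\|x\|^{-2\alpha-2|k-j|}| = \|x\|^{-2\Re\alpha-2|k-j|} \le a^{-2\Re\alpha-2|k-j|}$ (using $\Re\alpha>0$), so each such factor is bounded by a constant depending only on $a,\alpha,k,j,n$. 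Consequently
\[
\bigl|x^m T^k\{x^{-\mu-1/2}g(x)\}\bigr|
\le \sum_{j\le k}\binom{k}{j} C_{a,\alpha,k,j}\;\bigl|x^m T^j\{x^{-\mu-1/2}\psi(x)\}\bigr|
\le C \sum_{j\le k}\gamma^\mu_{m,j}(\psi),
\]
which is finite for every $m,k\in\N_0^n$ since $\psi\in\mathcal{H}_\mu$. This shows $g\in\mathcal{H}_\mu$ and in fact that the inclusion map $\psi\mapsto g$ is continuous on the subspace of $\mathcal{H}_\mu$ of functions supported in $\{\|x\|\ge a\}$.

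The step I expect to require the most care is the verification that $T^{k-j}$ applied to $\|x\|^{-2\alpha}$ genuinely stays of the form $c\,\|x\|^{-2\alpha-2|k-j|}$ — i.e.\ that the operators $T_i$ applied successively to a radial function of $\|x\|^2$ close up nicely and produce the claimed homogeneity in $\|x\|^2$. This is a short induction on $|k-j|$ using $T_i\{\varphi(\|x\|^2)\} = 2\varphi'(\|x\|^2)$, but one must keep track of the combinatorial constants and, more importantly, justify differentiating the (complex-power) function $t\mapsto t^{-\alpha}$ for $t = \|x\|^2 > 0$, which is legitimate since $t^{-\alpha}$ is holomorphic in $t$ on $(0,\infty)$ and its $\ell$-th derivative is $(-\alpha)(-\alpha-1)\cdots(-\alpha-\ell+1)\,t^{-\alpha-\ell}$, of modulus $O(t^{-\Re\alpha-\ell})$. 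Once this homogeneity and the uniform bound on $\{\|x\|\ge a\}$ are in hand, the rest is the routine Leibniz bookkeeping sketched above.
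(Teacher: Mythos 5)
Your argument is correct and follows essentially the same route as the paper: apply the Leibniz-type formula \eqref{Leibniz} to the product $\|x\|^{-2\alpha}\cdot(x^{-\mu-1/2}\psi)$ and bound the factors $T^{j}\|x\|^{-2\alpha}$ on the support of $\psi$, ending with an estimate by finitely many seminorms $\gamma^{\mu}_{m,j}(\psi)$. The only cosmetic difference is that you compute $T^{j}\|x\|^{-2\alpha}=c_{j,\alpha}\|x\|^{-2\alpha-2|j|}$ explicitly and bound it uniformly on $\{\|x\|\geq a\}$, whereas the paper splits the region into $a\leq\|x\|\leq 1$ (compactness) and $\|x\|\geq 1$ and merely asserts the constants $C(j,\alpha)$; your version in fact justifies that assertion.
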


\begin{proof} It is evident that $\|x\|^{-2\alpha}\psi(x)\in C^{\infty}(\Rnm)$. We are going to see that
\[
\sup_{x\in\Rnm}
\Bigl|x^{m}T^{k}
\{x^{-\mu-\frac{1}{2}}\|x\|^{-2\alpha}\psi(x)\}
\Bigr|<\infty,
\]
with $k,m\in\N_{0}^{n}$.
Since $\supp\psi\subset \Rnm\cap\{x:\|x\|\geq a\}$ with $a>0$, then
\begin{align*}
&\sup_{x\in\Rnm}
\Bigl|x^{m}T^{k}
\{x^{-\mu-\frac{1}{2}}\|x\|^{-2\alpha}\psi(x)\}\Bigr|
=
\sup_{x\in\Rnm: \|x\|\geq a}
\Bigl|x^{m}T^{k}
\{x^{-\mu-\frac{1}{2}}\|x\|^{-2\alpha}\psi(x)\}\Bigr|\\
&\quad\leq
\sup_{a\leq\|x\|\leq 1}\Bigl|x^{m}T^{k}
\{x^{-\mu-\frac{1}{2}}\|x\|^{-2\alpha}\psi(x)\}\Bigr|
+
\sup_{\|x\|\geq 1}\Bigl|x^{m}T^{k}
\{x^{-\mu-\frac{1}{2}}\|x\|^{-2\alpha}\psi(x)\}\Bigr|
\end{align*}
The first term in the last inequality is bounded because it is a continuous function  over a compact set. On the other hand, since equality \eqref{Leibniz} holds then,

\[
\sup_{\|x\|\geq 1}
\Bigl|x^{m}T^{k}
\{x^{-\mu-\frac{1}{2}}\|x\|^{-2\alpha}\psi(x)\}\Bigr|\leq
\]
\[
\sup_{\|x\|\geq 1}
\Bigl|x^{m}\sum_{j=0}^{k}
\binom{k}{j}T^{k-j}
\{x^{-\mu-\frac{1}{2}}\psi(x)\}\cdot
T^{j}\|x\|^{-2\alpha}\Bigr|\leq
\]
\[
\sum_{j=0}^{k} \binom{k}{j} C(j,\alpha)\:\:\gamma_{m,k-j}^{\mu}(\psi),
\]
where $C(j,\alpha)$ are constants depending on $\alpha$ and $j$ such that $\sup\limits_{\|x\|\leq 1} |T^{j}\|x\|^{-2\alpha}|\leq C(j,\alpha)$.

\end{proof}

\begin{proof}[Proof of Theorem \ref{Teorema5.1}]
Let $u\in \mathcal{B'}$ such that $(-(S_{\mu})_{\mathcal{B'}})^{\alpha}u=0$. Then for all $\phi\in \mathcal{B}$
\begin{equation}\label{eqequal0}
((-(S_{\mu})_{\mathcal{B'}})^{\alpha}u,\phi)=
(u,(-(S_{\mu})_{\mathcal{B}})^{\alpha}\phi)=0.
\end{equation}

Since $S_{\mu}$ is a continuous operator in  $\mathcal{B}$ (see Theorem  \ref{Teorema4.8}), then  $(-(S_{\mu})_{\mathcal{B}})^{\alpha}\phi$ is given by the Balakrishnan operator as:

\begin{equation}\label{eqpot1}
(-(S_{\mu})_{\mathcal{B}})^{\alpha}\phi=
\frac{\Gamma(\alpha)\Gamma(m-\alpha)}{\Gamma(m)}
\int_{0}^{\infty}
\lambda^{\alpha-1}
[(-(S_{\mu})_{\mathcal{B}})
(\lambda-(S_{\mu})_{\mathcal{B}})^{-1}]^{m}
\phi\:\:d\lambda.
\end{equation}

By definition of $\mathcal{B}$ and the fact that $L^{1}(\Rnm,sr)\cap L^{\infty}(\Rnm,r)\subset L^{p}(\Rnm,sr^{p})$ for all $1\leq p\leq\infty$ then $\mathcal{B}\subset D(S_{\mu,p})$ for all $1\leq p\leq \infty$, in particular, $\mathcal{B}\subset D(S_{\mu,2})$. Then from  Propositions 8.3 and 8.4 in \cite{MT07} we obtain that:
\begin{equation}\label{eqpot2}
(-S_{\mu,2})^{\alpha}\phi=
\frac{\Gamma(\alpha)\Gamma(m-\alpha)}{\Gamma(m)}
\int_{0}^{\infty}
\lambda^{\alpha-1}
[-S_{\mu,2}(\lambda-(S_{\mu,2})^{-1}]^{m}
\phi\:\:d\lambda.
\end{equation}

Since for $\phi\in\mathcal{B}$, the integrating into the expressions are equal and the fact that the convergence in  $\mathcal{B}$ implies the convergence in  $L^{2}(\Rnm)$ (see Lemma 2.1 and Remark 5.3 in \cite{Mo18}) we obtain the equality of  (\ref{eqpot1}) and (\ref{eqpot2}) as functions.

\bigskip
We conclude that
\[
(-(S_{\mu})_{\mathcal{B}})^{\alpha}\phi=
\HZ_{\mu }\|y\|^{2\alpha}\HZ_{\mu}\phi,
\quad \phi\in\mathcal{B},
\]
(see \cite[Proposition 8.4 ]{Mo18}). From the last equality and \eqref{eqequal0}, we have that

\begin{equation} \label{eqpot3}
((-(S_{\mu})_{\mathcal{B'}})^{\alpha}u,\phi)=
(u,\HZ_{\mu }\|y\|^{2\alpha}\HZ_{\mu }\phi)=0,
\end{equation}
for all $\phi\in\mathcal{B}$.

Since $\mathcal{B'}\subset\mathcal{H'}_{\mu} $  (see \cite[Remark 6.2]{Mo18}), we can consider the Hankel transform in $\mathcal{B'}$. We are going to see that the following affirmation holds:

\medskip
\begin{center}
" \sl{If  $u\in\mathcal{B'}$ is such that \eqref{eqpot3} is verified, then $(\HZ_{\mu}u,\psi)=0$ for all $\psi\in \mathcal{H}_{\mu}$ such that $\supp\psi\subset \Rnm\cap\{x:\|x\|\geq a\}$ with  $a>0$.}"
\end{center}
\medskip

Let $u\in\mathcal{B'}$ such that \eqref{eqpot3} is valid and $\psi\in \mathcal{H}_{\mu}$ such that $\supp\psi\subset \Rnm\cap\{x:\|x\|\geq a\}$ with  $a>0$. Then, by Lemma \ref{lemmamultpot},  $\|x\|^{-2\alpha}\psi(x)\in\mathcal{H}_{\mu}$ and  since the Hankel transform is an isomorphism in $\mathcal{H}_{\mu}$, there exists $\phi\in\mathcal{H}_{\mu}$ such that $\HZ_{\mu}\phi=\|x\|^{-2\alpha}\psi(x)$. So,

\[(\HZ_{\mu}u,\psi)=
(\HZ_{\mu}u,\|x\|^{2\alpha}\|x\|^{-2\alpha}\psi)=
(\HZ_{\mu}u,\|x\|^{2\alpha}\HZ_{\mu}\phi)=
(u,\HZ_{\mu}\|x\|^{2\alpha}\HZ_{\mu}\phi).
\]

Consequently, from  \eqref{eqpot3} we conclude that $(\HZ_{\mu}u,\psi)=0$, then  the assertion is valid. Thus by \cite[Theorem 4.1]{GMQ18}, there exist  $N\in\mathbb{N}_{0}$ and scalars $c_{k}$ with $|k|<N$  such that   $\HZ_{\mu}u=\sum_{|k|<N}c_{k}S^{k}_{\mu}\delta_{\mu}$ where $\delta_{\mu}$ is given by \cite[equation (2.3)]{GMQ18} for $k=0$. Then,
\[
u=x^{\mu+\frac{1}{2}}
\sum_{|k|\leq N}c_{k}(-1)^{|k|}\|x\|^{2k}
\]
\end{proof}

\begin{remark}[Regular distributions in $\mathcal{B'}$]
\end{remark}

If $f\in L_{loc}^{1}(\Rnm)$ and $f=O(x^{\mu+\frac{1}{2}})$ then $f$ is a regular distribution in  $\mathcal{B'}$ given by
\[
(f,\phi)=
\int_{\Rnm} f(x)\phi(x)\:dx,
\quad \phi\in\mathcal{B},
\]
and
\begin{align*}
|(f,\phi)|&=
\Bigl|\int_{\Rnm} f(x)\phi(x)\:dx\Bigr|\\
&\leq
\Bigl|\int_{\|x\|\leq M} f(x)\phi(x)dx\Bigr|+
\Bigl|\int_{\|x\|\geq M} f(x)\phi(x)dx\Bigr|\\
&\leq
\int_{\|x\|\leq M} |r^{-1}(x)f(x)|dx \:\|\phi\|_{L^{\infty}(r)}
+
\int_{\|x\|\geq M} c x^{\mu+\frac{1}{2}}|\phi(x)|dx\\
&=
C\|\phi\|_{L^{\infty}(r)}+c\:C_{\mu}\|\phi\|_{L^{1}(rs)}\leq C' \rho_{0}(\phi). \end{align*}

\medskip
\begin{corollary}
If $f\in L_{loc}^{1}(\Rnm)$,  $f=O(x^{\mu+\frac{1}{2}})$ and $(-(S_{\mu})_{\mathcal{B'}})^{\alpha}f=0$ then $f=C\;x^{\mu+\frac{1}{2}}$.
\end{corollary}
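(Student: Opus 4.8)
The plan is to read this corollary off from Theorem \ref{Teorema5.1} and then use the growth hypothesis to collapse the resulting polynomial to a constant. First I would observe, exactly as in the Remark preceding the statement, that the hypotheses $f\in L^{1}_{loc}(\Rnm)$ and $f=O(x^{\mu+\frac{1}{2}})$ guarantee that $\phi\mapsto\int_{\Rnm}f\phi$ defines a continuous functional on $\mathcal{B}$, so $f$ is a regular element of $\mathcal{B'}$. The assumption $(-(S_{\mu})_{\mathcal{B'}})^{\alpha}f=0$ then places $f$ under the hypothesis of Theorem \ref{Teorema5.1}, which yields a polynomial $p$ with $f=x^{\mu+\frac{1}{2}}p(\|x\|^{2})$ in $\mathcal{B'}$. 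Since both sides are regular distributions (the right-hand side is a continuous function on $\Rnm$), this identity holds for a.e.\ $x\in\Rnm$.

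Next I would exploit the growth condition. Writing $p(t)=\sum_{j=0}^{d}a_{j}t^{j}$, the hypothesis $f=O(x^{\mu+\frac{1}{2}})$ provides constants $c>0$ and $M>0$ with $|f(x)|\le c\,x^{\mu+\frac{1}{2}}$ for $\|x\|\ge M$. Combining this with $f=x^{\mu+\frac{1}{2}}p(\|x\|^{2})$ a.e.\ and with the fact that $x^{\mu+\frac{1}{2}}>0$ throughout $\Rnm$ (each exponent $\mu_{i}+\frac{1}{2}>0$ since $\mu_{i}>-\frac{1}{2}$), one gets $|p(\|x\|^{2})|\le c$ for a.e.\ $x$ with $\|x\|\ge M$. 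Because $t\mapsto p(t)$ is continuous and $\{x\in\Rnm:\|x\|\ge M\}$ has image $[M^{2},\infty)$ under $x\mapsto\|x\|^{2}$ (and contains a nonempty open set), the a.e.\ bound upgrades to $|p(t)|\le c$ for every $t\ge M^{2}$. A polynomial that is bounded on a half-line is constant, so $p\equiv C$, and therefore $f=C\,x^{\mu+\frac{1}{2}}$ a.e., which is the claim.

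Since this is essentially a corollary, I do not anticipate any substantial obstacle; the only step requiring a little care is the final implication that a polynomial bounded near infinity must be constant, which is precisely where the growth restriction $f=O(x^{\mu+\frac{1}{2}})$ does its work — without it Theorem \ref{Teorema5.1} permits an arbitrary polynomial factor $p(\|x\|^{2})$. One should also keep in mind that every equality here is understood modulo null sets, consistently with $f$ being only an $L^{1}_{loc}$ function.
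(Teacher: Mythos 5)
Your proposal is correct and follows exactly the route the paper intends: the preceding Remark makes $f$ a regular element of $\mathcal{B'}$, Theorem \ref{Teorema5.1} gives $f=x^{\mu+\frac{1}{2}}p(\|x\|^{2})$, and the growth hypothesis forces the polynomial $p$, being bounded on a half-line, to be constant. The paper leaves this argument implicit, and your write-up supplies the details (positivity of $x^{\mu+\frac{1}{2}}$ on $\Rnm$, upgrading the a.e.\ bound by continuity) correctly.
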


\section{Distributional Liouville  theorem for \texorpdfstring{$(-(\Delta_{\mu}))^{\alpha}$}{Deltamualpha}.}

From theory of similar operators given in  \cite{Mo18}, by the similarity of $S_{\mu}$ and  $\Delta _{\mu}$, and by the non-negativity of the part of $-S_{\mu}$ in $L^{1}(\Rnm,sr)$ and  $L^{\infty}(\Rnm,r)$ we deduce the non-negativity of the part of $-\Delta_{\mu}$ in $L^{1}(\Rnm,s)$ and $L^{\infty}(\Rnm)$. Consequently, we infer the non-negativity of the part of $-\Delta_{\mu}$ in the Banach space $Z=L^{1}(\Rnm,s)\cap L^{\infty}(\Rnm)$ with norm
\begin{equation*}
\left\| f\right\|_{Z}=
\max\left\{ \left\| f\right\|_{L^{1}(\Rnm,s)},
\left\| f\right\| _{L^{\infty }(\Rnm)}\right\}.
\end{equation*}

\bigskip
\noindent Thus, if we consider $Y$ as in Section 5 and  $L_{r}:Y\to Z$ given by $L_{r}f=rf$ then
\begin{align*}
\left\| rf\right\|_{Z}
&=
\max \left\{ \left\| rf\right\|_{L^{1}(\Rnm,s)},
     \left\| rf\right\|_{L^{\infty }(\Rnm)}\right\}\\
&=
\max \left\{ \left\| f\right\|_{L^{1}(\Rnm,rs)},
     \left\| f\right\|_{L^{\infty }(\Rnm,r)}\right\}\\
&=
\left\| f\right\|_{Y},
\end{align*}
so, $L_{r}$ is an isometric isomorphism.

\vspace{.2in}
Moreover, we can consider the locally convex space $\mathcal{F}$ given by:
\[
\mathcal{F}=
\{f\in Z: (\Delta_{\mu})^{k}f\in Z
\quad\mbox {for}\quad k=0,1,2,\cdots\}
=
\bigcap_{k=0}^{\infty}D\bigl[((\Delta_{\mu})_{Z})^{k}\bigr],\]

\noindent where with $(\Delta_{\mu})_{Z}$ we denote the part of $\Delta_{\mu}$ in $Z$. The space $\mathcal{F}$ is endowed with the topology generated by the family of seminorms given by
\[
\gamma_{m}(f)=
\max_{0\leq k\leq m}
\{\| (\Delta_{\mu})^{k}f \|_{Z}\},
\quad m=0,1,2,\cdots
\]

Thus, the space $\mathcal{F}$ verifies that is a Fréchet space and from Remarks \ref{Remark4.8 bis} and  \ref{Remark4.9} we deduce that  $\mathcal{F}\subset  C_{0}(\Rnm)$, $\mathcal{F}\subset L^{p}(\Rnm,s)$ for all $1\leq p<\infty$, $\mathcal{F}\subset L^{\infty}(\Rnm)$, $\mathcal{H}_{\mu}\subset \mathcal{F}$ and the topology of $\mathcal{H}_{\mu}$ induced by $\mathcal{F}$ is weaker than the usual topology in $\mathcal{H}_{\mu}$. Moreover, the operator $\Delta_{\mu}$ verifies that
\[
\gamma_{m}(\Delta_{\mu}f)=
\max_{0\leq k\leq m}
\{\| (\Delta_{\mu})^{k}\Delta_{\mu}f \|_{Z}\}
=\gamma_{m+1}(f),
\]

\noindent for all $f\in \mathcal{F}$. Then $(\Delta_{\mu})_{\mathcal{F}}$ the part  of  $\Delta_{\mu}$ in $\mathcal{F}$, is a continuous
\[
(\Delta_{\mu})_{\mathcal{F}}:\mathcal{F}\to\mathcal{F}.
\]

\noindent If $f\in\mathcal{B}$, (see \eqref{eqB}), then $rf\in\mathcal{F}$ and

\begin{align*}
\gamma_{m}(rf)&=
\max_{0\leq k\leq m}
\{\| (\Delta_{\mu})^{k}rf \|_{Z}\}
=
\max_{0\leq k\leq m}
\{\| r(S_{\mu})^{k}r^{-1}rf \|_{Z}\}\\
&=
\max_{0\leq k\leq m}
\{\| r(S_{\mu})^{k}f \|_{Z}\}
=\max_{0\leq k\leq m}
\{\| (S_{\mu})^{k}f \|_{Y}\}\\
&=\rho_{m}(f),
\end{align*}

\noindent where we have consider \eqref{similaridad Op de Bessel}. So, the application $L_{r}:\mathcal{B}\to \mathcal{F}$, given by $L_{r}f=rf$ is an isomorphism of locally convex spaces with inverse given by  $L_{r^{-1}}:\mathcal{F}\to \mathcal{B}$.

\begin{remark} Since $\mathcal{B}$ and $\mathcal{F}$ are isomorphic then we can deduce that $\mathcal{F'}$ is sequentially complete as $\mathcal{B'}$ is also sequentially complete (see Proposition \ref{B' secuencialmente completo}).
\end{remark}

So, if we consider the continuous operator $(S_{\mu})_{\mathcal{B}}:\mathcal{B}\to \mathcal{B}$,  then by \eqref{similaridad Op de Bessel} we obtain the similarity relation,

\begin{equation}\label{relopBesselHir}
(\Delta_{\mu})_{\mathcal{F}}=
L_{r}\:(S_{\mu})_{\mathcal{B}}\:L_{r^{-1}}.
\end{equation}

\noindent We deduce by \eqref{relopBesselHir} the non-negativity of $-(\Delta_{\mu})_{\mathcal{F}}$ and by \cite[Proposition 1.1]{Mo18}, for $\alpha\in\C$, $\Re\alpha>0$, we have that

\begin{equation}\label{relopBessHirF}
(-(\Delta_{\mu})_{\mathcal{F}})^{\alpha}=L_{r}\:(-(S_{\mu})_{\mathcal{B}})^{\alpha}\:L_{r^{-1}}.
\end{equation}

\noindent Consequently,
\[
(-(\Delta_{\mu})_{\mathcal{F'}})^{\alpha}=
((-(\Delta_{\mu})_{\mathcal{F}})^{*})^{\alpha}=
((-(\Delta_{\mu})_{\mathcal{F}})^{\alpha})^{*},
\]
where we have considered in the second equality that $\mathcal{F}$ is a Fréchet space, (see \cite[pp.134]{MS01}). Thus,

\begin{equation}\label{eqpotBessHirdistr1}
(-(\Delta_{\mu})_{\mathcal{F'}})^{\alpha}=
(L_{r^{-1}})^{*}\:((-(S_{\mu})_{\mathcal{B}})^{\alpha})^{*}\:(L_{r})^{*}=
(L_{r^{-1}})^{*}\:(-(S_{\mu})_{\mathcal{B'}})^{\alpha}\:(L_{r})^{*},
\end{equation}
and for  $T\in\mathcal{F'}$, $\phi\in\mathcal{F}$

\begin{align}
((-(\Delta_{\mu})_{\mathcal{F'}})^{\alpha}T,\phi)
&=
((L_{r^{-1}})^{*}\:(-(S_{\mu})_{\mathcal{B'}})^{\alpha}\:(L_{r})^{*}T,\phi)\nonumber\\
&=(T,L_{r}(-(S_{\mu})_{\mathcal{B}})^{\alpha}L_{r^{-1}}\phi).\label{eqpotBessHirdistr2}
\end{align}

From now on, we will use the notation:

\[(\Delta_{\mu})_{\mathcal{F}}=
x^{-\mu-\frac{1}{2}}\:
(S_{\mu})_{\mathcal{B}}\:
x^{\mu+\frac{1}{2}},
\]
\[(-(\Delta_{\mu})_{\mathcal{F}})^{\alpha}=
x^{-\mu-\frac{1}{2}}\:
(-(S_{\mu})_{\mathcal{B}})^{\alpha}\:
x^{\mu+\frac{1}{2}}\]
and
\[(-(\Delta_{\mu})_{\mathcal{F'}})^{\alpha}=
x^{\mu+\frac{1}{2}}\:
(-(S_{\mu})_{\mathcal{B'}})^{\alpha}\:
x^{-\mu-\frac{1}{2}}\]
to refer to  \eqref{relopBesselHir}, \eqref{relopBessHirF} and \eqref{eqpotBessHirdistr1}. In the last equation, the operators $x^{\mu+\frac{1}{2}}$ and $x^{-\mu-\frac{1}{2}}$ represent  $(L_{r^{-1}})^{*}$ and $(L_{r})^{*}$, so,

\[x^{\mu+\frac{1}{2}}:\mathcal{B'}\to \mathcal{F'},\]
\[x^{-\mu-\frac{1}{2}}:\mathcal{F'}\to\mathcal{B'},\]

\noindent are given by
\[(x^{\mu+\frac{1}{2}}T_{1},\phi)=
(T_{1},x^{\mu+\frac{1}{2}}\phi),
\qquad (T_{1}\in \mathcal{B'}), (\phi\in\mathcal{F})\]
\[(x^{-\mu-\frac{1}{2}}T_{2},\psi)=
(T_{2},x^{-\mu-\frac{1}{2}}\psi),
\qquad (T_{2}\in \mathcal{F'}), (\psi\in\mathcal{B})\]

\vspace{.2in}
Now we are able to establish the following theorem:

\begin{theorem}
Let $u\in\mathcal{F'}$ and $\alpha\in\C$ with $\Re\alpha>0$. If $(-(\Delta_{\mu})_{\mathcal{F'}})^{\alpha}u=0$ then there exists a polynomial $p$ such that $u=x^{2\mu+1}p(\|x\|^{2})$.
\end{theorem}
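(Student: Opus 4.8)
The plan is to deduce this statement directly from Theorem \ref{Teorema5.1} by transporting it through the isomorphism $L_r$ and its transpose. I would start by recalling from \eqref{eqpotBessHirdistr1} and the notational conventions fixed just above the statement that
\[
(-(\Delta_{\mu})_{\mathcal{F'}})^{\alpha}=
x^{\mu+\frac12}\,(-(S_{\mu})_{\mathcal{B'}})^{\alpha}\,x^{-\mu-\frac12},
\]
where $x^{-\mu-\frac12}=(L_{r})^{*}\colon\mathcal{F'}\to\mathcal{B'}$ and $x^{\mu+\frac12}=(L_{r^{-1}})^{*}\colon\mathcal{B'}\to\mathcal{F'}$ are the transposes of the mutually inverse topological isomorphisms $L_{r}\colon\mathcal{B}\to\mathcal{F}$ and $L_{r^{-1}}\colon\mathcal{F}\to\mathcal{B}$. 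Consequently $x^{\mu+\frac12}$ and $x^{-\mu-\frac12}$ are again mutually inverse isomorphisms between the strong duals $\mathcal{B'}$ and $\mathcal{F'}$; in particular $x^{\mu+\frac12}\colon\mathcal{B'}\to\mathcal{F'}$ is injective.

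Next, given $u\in\mathcal{F'}$ with $(-(\Delta_{\mu})_{\mathcal{F'}})^{\alpha}u=0$, I would set $v=x^{-\mu-\frac12}u\in\mathcal{B'}$. Then $x^{\mu+\frac12}\big((-(S_{\mu})_{\mathcal{B'}})^{\alpha}v\big)=0$, so injectivity of $x^{\mu+\frac12}$ forces $(-(S_{\mu})_{\mathcal{B'}})^{\alpha}v=0$ in $\mathcal{B'}$. Theorem \ref{Teorema5.1} then yields a polynomial $p$ with $v=x^{\mu+\frac12}p(\|x\|^{2})$, meaning that $v$ is the regular element of $\mathcal{B'}$ attached to that locally integrable function.

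It then remains to recover $u$. Since $x^{\mu+\frac12}$ and $x^{-\mu-\frac12}$ are inverse to each other we have $u=x^{\mu+\frac12}v$, and unwinding the definition of this transpose map against a test function $\phi\in\mathcal{F}$ gives
\[
(u,\phi)=(v,x^{\mu+\frac12}\phi)
=\int_{\Rnm}x^{\mu+\frac12}p(\|x\|^{2})\,x^{\mu+\frac12}\phi(x)\,dx
=\int_{\Rnm}x^{2\mu+1}p(\|x\|^{2})\,\phi(x)\,dx,
\]
so $u$ is precisely the regular distribution $x^{2\mu+1}p(\|x\|^{2})$, which is the asserted conclusion.

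The argument is essentially formal once Theorem \ref{Teorema5.1} is in hand; the only points needing a little care are verifying that the transpose of an isomorphism of Fréchet spaces is again an isomorphism (hence injective) of the strong duals, and keeping straight which of $x^{\pm(\mu+\frac12)}$ maps $\mathcal{B'}$ into $\mathcal{F'}$ and which goes the other way — a bookkeeping matter governed entirely by the conventions set just before the statement.
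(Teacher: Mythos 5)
Your proposal is correct and follows essentially the same route as the paper: reduce to Theorem \ref{Teorema5.1} by setting $v=x^{-\mu-\frac12}u$ and showing $(-(S_{\mu})_{\mathcal{B'}})^{\alpha}v=0$, then transport the conclusion back through the multiplication isomorphism. The only cosmetic difference is that you invoke the general fact that the transpose of the isomorphism $L_{r}\colon\mathcal{B}\to\mathcal{F}$ is an isomorphism of the strong duals (hence injective), whereas the paper verifies the same point by pairing against $\psi=x^{\mu+\frac12}x^{-\mu-\frac12}\psi$ for $\psi\in\mathcal{B}$; these amount to the same computation.
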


\begin{proof}
Let  $u\in \mathcal{F'}$ such that     $(-(\Delta_{\mu})_{\mathcal{F'}})^{\alpha}u=0$. Then
\begin{equation}\label{eqequal1}
((-(\Delta_{\mu})_{\mathcal{F'}})^{\alpha}u,\phi)=
(x^{\mu+\frac{1}{2}}\:
(-(S_{\mu})_{\mathcal{B'}})^{\alpha}\:
x^{-\mu-\frac{1}{2}}u,\phi)=0
\end{equation}
for all $\phi\in\mathcal{F}$. Since   $(-(S_{\mu})_{\mathcal{B'}})^{\alpha}\:x^{-\mu-\frac{1}{2}}u \in \mathcal{B'}$, then given $\psi \in\mathcal{B}$ and considering \eqref{eqequal1}, we obtain that
\[((-(S_{\mu})_{\mathcal{B'}})^{\alpha}\:x^{-\mu-\frac{1}{2}}u,\psi)=
((-(S_{\mu})_{\mathcal{B'}})^{\alpha}\:x^{-\mu-\frac{1}{2}}u,x^{\mu+\frac{1}{2}}x^{-\mu-\frac{1}{2}}\psi)=\]
\[(x^{\mu+\frac{1}{2}}(-(S_{\mu})_{\mathcal{B'}})^{\alpha}\:x^{-\mu-\frac{1}{2}}u,x^{-\mu-\frac{1}{2}}\psi)=0.\]

\noindent By Theorem \ref{Teorema5.1} we deduce that there exists a polynomial $p$ such that $x^{-\mu-\frac{1}{2}}u=x^{\mu+\frac{1}{2}}p(\|x\|^{2})$ and consequently $u=x^{2\mu+1}p(\|x\|^{2})$.

\smallskip
\end{proof}

\begin{corollary}
If $f\in L_{loc}^{1}(\Rnm)$,  $f=O(x^{2\mu+1})$ and $(-(S_{\mu})_{\mathcal{B'}})^{\alpha}f=0$ then $f=C\;x^{2\mu+1}$.
\end{corollary}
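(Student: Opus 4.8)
The plan is to derive this as a corollary of the distributional Liouville theorem~\ref{teorema 8.2} for $(-(\Delta_{\mu})_{\mathcal{F'}})^{\alpha}$, after first checking that the hypotheses on $f$ guarantee that $f$ belongs to $\mathcal{F'}$ as a regular distribution. To that end I would transport the Remark on regular distributions in $\mathcal{B'}$ through the isomorphism $x^{-\mu-\frac{1}{2}}\colon\mathcal{F'}\to\mathcal{B'}$: if $f\in L^{1}_{loc}(\Rnm)$ and $f=O(x^{2\mu+1})$, then $g:=x^{-\mu-\frac{1}{2}}f$ satisfies $g=O(x^{\mu+\frac{1}{2}})$, so $g$ is a regular element of $\mathcal{B'}$ by the Section~7 remark; unwinding $(x^{\mu+\frac{1}{2}}g,\phi)=(g,x^{\mu+\frac{1}{2}}\phi)$ shows that $x^{\mu+\frac{1}{2}}g\in\mathcal{F'}$ acts as $\phi\mapsto\int_{\Rnm}f\phi$, i.e.\ $f$ itself is a regular distribution in $\mathcal{F'}$. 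In particular the equation $(-(\Delta_{\mu})_{\mathcal{F'}})^{\alpha}f=0$ is meaningful.

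Granting this, Theorem~\ref{teorema 8.2} gives a polynomial $p$ with $f=x^{2\mu+1}p(\|x\|^{2})$ in $\mathcal{F'}$, hence $f(x)=x^{2\mu+1}p(\|x\|^{2})$ for a.e.\ $x\in\Rnm$. Since $x^{2\mu+1}=\prod_{i}x_{i}^{2\mu_{i}+1}>0$ on $\Rnm$, the growth hypothesis $|f(x)|\le c\,x^{2\mu+1}$ for $\|x\|$ large becomes $|p(\|x\|^{2})|\le c$ a.e.\ for $\|x\|$ large; as $\|x\|^{2}$ sweeps out an entire half-line $[R^{2},\infty)$ and $p$ is continuous, $p$ is bounded on that half-line and therefore constant, say $p\equiv C$. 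Thus $f=C\,x^{2\mu+1}$ a.e., which is the claimed identity in $\mathcal{F'}$.

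The only point requiring care is the first step: one must make sure that ``$f=O(x^{2\mu+1})$'' really places $f$ in the distributional space $\mathcal{F'}$ attached to $\Delta_{\mu}$, so that Theorem~\ref{teorema 8.2} is applicable; the remainder is the routine ``polynomial with controlled growth is constant'' argument. A slightly shorter alternative bypasses Theorem~\ref{teorema 8.2}: with $g=x^{-\mu-\frac{1}{2}}f$ as above, the similarity relation $(-(\Delta_{\mu})_{\mathcal{F'}})^{\alpha}=x^{\mu+\frac{1}{2}}(-(S_{\mu})_{\mathcal{B'}})^{\alpha}x^{-\mu-\frac{1}{2}}$ (see \eqref{eqpotBessHirdistr1}) together with the invertibility of $x^{-\mu-\frac{1}{2}}\colon\mathcal{F'}\to\mathcal{B'}$ forces $(-(S_{\mu})_{\mathcal{B'}})^{\alpha}g=0$, and since $g=O(x^{\mu+\frac{1}{2}})$ the Section~7 corollary yields $g=C\,x^{\mu+\frac{1}{2}}$, whence $f=x^{\mu+\frac{1}{2}}g=C\,x^{2\mu+1}$.
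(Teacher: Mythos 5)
Your argument is correct and is essentially the proof the paper intends but leaves unwritten: after (rightly) reading the hypothesis as $(-(\Delta_{\mu})_{\mathcal{F'}})^{\alpha}f=0$ — the printed $(-(S_{\mu})_{\mathcal{B'}})^{\alpha}$ is an evident slip, since the growth bound $f=O(x^{2\mu+1})$ and the conclusion $f=C\,x^{2\mu+1}$ belong to the $\mathcal{F'}$ setting — you verify that $f$ is a regular element of $\mathcal{F'}$, apply Theorem \ref{teorema 8.2}, and use the growth bound to force the polynomial $p$ to be constant, exactly paralleling the corollary at the end of Section 7. Your alternative route, passing to $g=x^{-\mu-\frac{1}{2}}f$ via the similarity relation \eqref{eqpotBessHirdistr1} and invoking the Section 7 corollary, is equally valid and is just the same reduction the paper itself uses to derive the $\mathcal{F'}$ theorem from Theorem \ref{Teorema5.1}.
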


\clearpage
\appendix

\section{Special functions}

In this appendix we summarize properties of the Bessel function of the first species and order $\alpha$ given by:

\begin{equation}\label{Ap-bessel}
J_{\alpha}(z)=\left(\frac{z}{2}\right)^{\alpha}
\sum_{n=0}^{\infty} \frac{(-1)^{n}\,
\left (\frac{z}{2}\right)^{2n}}{n!\,\Gamma(\alpha+n+1)}.
\end{equation}

\noindent According to  \cite[p.310]{Hi60}, for $\alpha\in\R$,  $\alpha>-\frac{1}{2}$, the Bessel function verifies that

\begin{equation}\label{cota Bessel}
|C_{\alpha}\,z^{-\alpha}\,J_{\alpha}(z)|\leq 1,
\end{equation}

\noindent where $C_{\alpha}=2^{\alpha}\Gamma(\alpha+1)$

\medskip
\noindent The following equalities are also verified:

\begin{equation}\label{Cilindrícas}
\int_{0}^{\pi} \frac{J_{\alpha}\left(\sqrt{y^{2}+z^{2}-2yz\cos\phi}\right)}{\left(y^{2}+z^{2}-2yz\cos\phi\right)^{\frac{1}{2}\alpha}} \sin^{2\alpha}\phi\: d\phi =2^{\alpha}\,\Gamma(\alpha+1/2)\,\Gamma(1/2)\,\frac{J_{\alpha}(y)}{y^{\alpha}}\,\frac{J_{\alpha}(z)}{z^{\alpha}},
\end{equation}

\noindent for $\alpha>-\frac{1}{2}$,  see \cite[pp.367]{Wa} and

\begin{equation}\label{Prop3.14-eq1}
 \int_{0}^{\infty}e^{\frac{-ay^{2}}{2}}\,J_{\alpha}(ry)\,
 y^{\alpha+1}\,dy=
 r^{\alpha}\,a^{-\alpha-1}\,e^{-\frac{r^{2}}{2a}},
\end{equation}

\noindent for $\alpha>-1$ and  $a>0$, see \cite[pp.46]{Ob}.

\bigskip

\noindent The following equalities are valid for integrals that involve the Gamma function: Let $a>0$ and $\mu>-1$, then the following equalities are valid
\begin{equation}\label{ApA-eq1}
\int_{0}^{\infty} e^{-\frac{x^{2}}{2}}\,x^{2\mu+1}\,dx=
2^{\mu}\,\Gamma(\mu+1).
\end{equation}

\begin{equation}\label{ApA-eq2}
\int_{0}^{\infty} e^{-\frac{x^{2}}{2a}}\,x^{2\mu+1}\,dx=
2^{\mu}\,\Gamma(\mu+1)\,a^{\mu+1}.
\end{equation}

\begin{equation}\label{integral sin}
\int_{0}^{\pi/2}\sin^{2r}\theta\;d\theta=
\frac{\Gamma(1/2)\,\Gamma(r+1/2)}{2\Gamma (r+1)}=
\frac{\sqrt{\pi}\,\Gamma(r+1/2)}{2\Gamma (r+1)}.
\end{equation}

\bigskip
\noindent Another important equation is the Euler Complements Formula
\begin{equation}\label{complementosEuler}
\Gamma(\alpha)\Gamma(1-\alpha)=
\frac{\pi}{\sin \pi\alpha}
\quad (0<\Re\alpha<1).
\end{equation}

\vspace{.2in}

\section{ Some results on Hankel transforms, convolution and the Inversion theorem}

\begin{proof}[Proof of Proposition \ref{prop-nDZ}]\label{Proof-prop-nDZ}
Assertion $(i)$ follows immediately.\\

To proof $(ii)$ let us see the integrability of  $\DZ_{\mu}(x,y,z)
\prod\limits_{i=1}^{n}
\{\sqrt{z_it_i}J_{\mu_i}(z_it_i)\}$. First, we observe that
\[
\int_{\Rnm}\DZ_{\mu}(x,y,z)
\prod\limits_{i=1}^{n}
\{\sqrt{z_it_i}J_{\mu_i}(z_it_i)\}\;dz=
\int_{\Rnm}
\prod\limits_{i=1}^{n}
\{\DZ_{\mu_i}(x_i,y_i,z_i)\sqrt{z_it_i}J_{\mu_i}(z_it_i)\}\;dz
\]
To proof this, it will be enough to see that
\[
\int_{0}^{\infty}
D_{\alpha}(u,v,w)\sqrt{wt}J_{\alpha}(wt) \;dw
=
t^{-\alpha-1/2}
\sqrt{ut}
\,J_{\alpha}(ut)
\sqrt{vt}
\,J_{\alpha}(vt),
\]
\noindent where $u, v, w$ and $t\in(0,\infty)$ and $\alpha>-\frac{1}{2}$.  Since

\clearpage
\begin{align*}
|D_{\alpha} & (u,v,w)\sqrt{wt}J_{\alpha}(wt)|
=
|D_{\alpha}(u,v,w) (wt)^{\alpha+1/2} (wt)^{-\alpha}J_{\alpha}(wt)|\\
&\leq
C |D_{\alpha}(u,v,w) (wt)^{\alpha+1/2}|\\
&\leq
C t^{\alpha+1/2}(uv)^{-\alpha+1/2} w [(u+v)^{2}-w^2]^{\alpha-1/2} [w^{2}-(u-v)^2]^{\alpha-1/2}.
\end{align*}

\noindent Here we have used that the function $w^{-\alpha}J_{\alpha}(w)$ is bounded for $\alpha\in\R$, greater than $-\frac{1}{2}$. In fact, $|w^{-\alpha}J_{\alpha}(w)|\leq \frac{\sqrt{\pi}}{2^{\alpha}\Gamma(\alpha+1/2)}$. Moreover $\supp A(u,v,w)\subset[|u-v|,u+v]$, so

\[
\int_{0}^{\infty}
|D_{\alpha}(u,v,w)\sqrt{wt}J_{\alpha}(wt)|\:dw
\leq C
\int_{|u-v|}^{u+v} w [(u+v)^{2}-w^2]^{\alpha-1/2} [w^{2}-(u-v)^2]^{\alpha-1/2} \;dw,
\]

\noindent  rewriting the last integral
\[
\int_{|u-v|}^{u+v}
\frac{w}{[(u+v)^{2}-w^2]^{1/2-\alpha} [w^{2}-(u-v)^2]^{1/2-\alpha}} \;dw=
\]

\begin{equation}\label{prop-nDZ.eq1}
\int_{|u-v|}^{u+v}
\frac{w}{[(u+v)-w]^{1/2-\alpha}[(u+v)+w]^{1/2-\alpha}
         [w-|u-v|]^{1/2-\alpha} [w+|u-v|]^{1/2-\alpha}} \;dw.
\end{equation}

\noindent  To analyze integrability, we separate the region of integration $[|u-v|, u+v]$ considering $c$, arbitrary and fix such that $|u-v|\leq c\leq u+v$. Let us note that it is possible to write \eqref{prop-nDZ.eq1} as

\begin{equation}\label{prop-nDZ.eq2}
\int_{|u-v|}^{c} \frac{f_{1}(w)}{[w-|u-v|]^{1/2-\alpha}} dw +
\int_{c}^{u+v} \frac{f_{2}(w)}{[(u+v)-w]^{1/2-\alpha}} dw
\end{equation}

\noindent where $f_{1}(w)=\frac{w}{[(u+v)^{2}-w^{2}]^{1/2-\alpha}[w+|u-v|]^{1/2-\alpha}}$ and  $f_{2}(w)=\frac{w}{[(u+v)+w]^{1/2-\alpha}[w^{2}-(u-v)^{2}]^{1/2-\alpha}}$.

\smallskip
\noindent Since $f_{1}(w)$ is a continuous function in  $[|u-v|,c]$, it results bounded. Then there is a constant $C_{1}>0$ such that $|f_{1}(w)|\leq C_{1}$ for all  $w\in [|u-v|,c]$. The same goes for $f_{2}$ in $[c, u+v]$. The problem is then reduced to studying the integrability of $\frac{1}{[w-|u-v|]^{1/2-\alpha}}$ in $[|u-v|,c]$ and  $\frac{1}{[(u+v)-w]^{1/2-\alpha}}$ in $[c,u+v]$,

\begin{equation}\label{prop-nDZ.eq3}
C_{1}\int_{|u-v|}^{c} \frac{1}{[w-|u-v|]^{1/2-\alpha}} dw +
C_{2}\int_{c}^{u+v} \frac{1}{[(u+v)-w]^{1/2-\alpha}} dw.
\end{equation}

\noindent Since $\alpha>-\frac{1}{2}$ both integrals in \eqref{prop-nDZ.eq3} are finite. Then it is proved that $D_{\alpha}(u,v,w)\sqrt{wt}J_{\alpha}(wt)$ is integrable in $(0,\infty)$ for $\alpha>-\frac{1}{2}$.

\bigskip
Now, we consider the change of variables
\begin{equation}
T:(0,\pi)\to (0,\infty)
\quad \text{con}\quad
T(\theta)=\sqrt{u^{2}+v^{2}-2uv \cos\theta},
\end{equation}
where $\quad\frac{d}{d\theta}T(\theta)>0,
       \quad T(0)=|u-v|,
       \quad T(\pi)=u+v\quad$ and

\[
\frac{d}{d\theta}T(\theta)
=\frac{uv\sin\theta}{\sqrt{u^{2}+v^{2}-2xy\cos\theta}}.
\]
So,

\begin{align*}
\int_{0}^{\infty}
&
D_{\alpha}(u,v,w)\sqrt{wt}J_{\alpha}(wt) \;dw
=
\frac{2^{\alpha-1}(xy)^{-\alpha+1/2}}{\Gamma(\alpha+1/2)\sqrt{\pi}}
\int_{|u-v|}^{u+v} A(u,v,w)^{2\alpha-1}
w^{-\alpha+1/2}\sqrt{wt}J_{\alpha}(wt) \;dw\\
&=
\frac{2^{\alpha-1}(uv)^{-\alpha+1/2}\sqrt{t}}{\Gamma(\alpha+1/2)\sqrt{\pi}}
\int_{|u-v|}^{u+v} A(u,v,w)^{2\alpha-1}
w^{-\alpha+1}J_{\alpha}(wt) \;dw\\
&=
\frac{2^{\alpha-1}(uv)^{-\alpha+1/2}\sqrt{t}}{\Gamma(\alpha+1/2)\sqrt{\pi}}
\int_{0}^{\pi}
\frac{
\left(\frac{uv}{2}\sin\theta\right)^{2\alpha-1}
J_{\alpha}(\sqrt{u^{2}+v^{2}-2uv\cos\theta}\;t)\:
uv\sin\theta
}
{(\sqrt{u^{2}+v^{2}-2uv\cos\theta})^{\alpha-1}\sqrt{u^{2}+v^{2}-2uv\cos\theta}}
\;d\theta  \\
&=
\frac{2^{-\alpha}(uv)^{\alpha+1/2}\sqrt{t}}{\Gamma(\alpha+1/2)\sqrt{\pi}}
\int_{0}^{\pi}
\frac{J_{\alpha}(\sqrt{u^{2}+v^{2}-2uv\cos\theta}\;t)}
{(\sqrt{u^{2}+v^{2}-2uv\cos\theta})^{\alpha}}
\:\sin^{2\alpha}\theta
\;d\theta\\
&=
\frac{2^{-\alpha}(uv)^{\alpha+1/2}\sqrt{t}\;t^{\alpha}}{\Gamma(\alpha+1/2)\sqrt{\pi}}
\int_{0}^{\pi}
\frac{J_{\alpha}(\sqrt{(ut)^{2}+(vt)^{2}-2(ut)(vt)\cos\theta})}
{\left(\sqrt{(ut)^{2}+(vt)^{2}-2(ut)(vt)\cos\theta}\right)^{\alpha}}
\:\sin^{2\alpha}\theta
\;d\theta  \\
&=
\frac{2^{-\alpha}(uv)^{\alpha+1/2}\sqrt{t}\;t^{\alpha}}
{\Gamma(\alpha+1/2)\sqrt{\pi}}
2^{\alpha}\,\Gamma(\alpha+1/2)\,\Gamma(1/2)
\,\frac{J_{\alpha}(ut)}{(ut)^{\alpha}}
\,\frac{J_{\alpha}(vt)}{(vt)^{\alpha}}\\
&=t^{-\alpha-1/2}
\sqrt{ut}
\,J_{\alpha}(ut)
\sqrt{vt}
\,J_{\alpha}(vt)
\end{align*}

\noindent where have we used \eqref{Cilindrícas} for $\alpha>-\frac{1}{2}$.

\medskip
To proof $(iii)$, let us observe that
\[
\int_{\Rnm} z^{\mu+1/2}\DZ_{\mu}(x,y,z) dz=
\int_{\Rnm}
\prod_{i=1}^{n} \{z_{i}^{\mu_{i}+1/2}D_{\mu_i}(x_i,y_i,z_i)\} dz_1\ldots dz_n.
\]

\noindent Suffice it to see then that
\[
\int_{0}^{\infty} w^{\alpha+1/2} D_{\alpha}(u,v,w)\;dw =
C_{\alpha}^{-1} u^{\alpha+1/2}  v^{\alpha+1/2},
\]
\noindent where $u,v,w\in(0,\infty)$, $\alpha>-\frac{1}{2}$. Then

\begin{align*}
&\int_{0}^{\infty} w^{\alpha+1/2} D_{\alpha}(u,v,w)\;dw =
\frac{2^{\alpha-1}}{\Gamma(\alpha+1/2)\sqrt{\pi}}
(uv)^{-\alpha+1/2}
\int_{0}^{\infty}w\;A(u,v,w)^{2\alpha-1}\;dw\\
&=
\frac{2^{\alpha-1}(uv)^{-\alpha+1/2}}{\Gamma(\alpha+1/2)\sqrt{\pi}}
\int_{0}^{\pi}\sqrt{u^2+v^2-2uv\cos\theta}
\left(\frac{uv\sin\theta}{2}\right)^{2\alpha-1}
\frac{uv\sin\theta}{\sqrt{u^2+v^2-2uv\cos\theta}} d\theta\\
&=
\frac{2^{-\alpha}(uv)^{\alpha+1/2}}{\Gamma(\alpha+1/2)\sqrt{\pi}}
\int_{0}^{\pi}\sin^{2\alpha}\theta\;d\theta
=
\frac{2^{-\alpha}(uv)^{\alpha+1/2}}{\Gamma(\alpha+1)}
=
c_{\alpha}^{-1} u^{\alpha+1/2} v^{\alpha+1/2},
\end{align*}
\noindent where we ave used \eqref{integral sin} from Appendix.
\end{proof}

\begin{proof}[Proof of Lemma \ref{lema3.3} (i)] \label{Proof-lema3.3}
Let $f\in L^{1}(sr)$ and $g\in L^{\infty}(r)$, let us  see that $f\convZ g\in L^{\infty}(r)$. Since

\begin{equation}\label{lema3.3-eq3}
f\convZ g(x)=
\int_{\Rnm}\int_{\Rnm}f(y)g(z)\:\DZ_{\mu}(x,y,z)\;dy\:dz,
\end{equation}

\noindent then
\begin{equation}\label{lema3.3-eq4}
|f\convZ g(x)|\leq\int_{\Rnm}\int_{\Rnm}|f(y)|\:|g(z)|\:\DZ_{\mu}(x,y,z)\:s(y)s(z)\:dy\:dz.
\end{equation}

\noindent Let us analyze the existence of the next iterated integral
\begin{align}
&\int_{\Rnm}\left\{
\int_{\Rnm}
|f(y)|\:|g(z)|\:\DZ_{\mu}(x,y,z)\:dz\right\}\:dy\nonumber\\
& =
\int_{\Rnm}|f(y)|
\left\{\int_{\Rnm}
|g(z)|\:\DZ_{\mu}(x,y,z)\:dz\right\}\:dy\nonumber\\
& =
\int_{\Rnm}|f(y)|
\left\{\int_{\Rnm}
|r(z)g(z)|\:r^{-1}(z)\DZ_{\mu}(x,y,z)\:dz\right\}\:dy\nonumber\\
&\leq \|g\|_{L^{\infty}(r)}
\int_{\Rnm}|f(y)|
\left\{\int_{\Rnm}
z^{\mu+1/2}\DZ_{\mu}(x,y,z)\:dz\right\}\:dy\nonumber\\
&= \|g\|_{L^{\infty}(r)}\int_{\Rnm}
|f(y)|\:C_{\mu}^{-1} x^{\mu+1/2} y^{\mu+1/2}\:dy\nonumber\\
&= \|g\|_{L^{\infty}(r)} x^{\mu+1/2}
\int_{\Rnm}|f(y)|s(y)r(y)\:dy=
 x^{\mu+1/2}\|g\|_{L^{\infty}(r)}\|f\|_{L^{1}(sr)}\label{lema3.3-eq5}
\end{align}

Since the integral
$\int_{\Rnm}|f(y)|
\left\{\int_{\Rnm}
|g(z)|\:\DZ_{\mu}(x,y,z)\:dz\right\}\:dy<\infty$ for  $f\in L^{1}(sr)$ and  $g\in L^{\infty}(r)$, then Tonelli's Theorem allows us to affirm that the integral in \eqref{lema3.3-eq3} exists for all $x\in\Rnm$, and from \eqref{lema3.3-eq4} and  \eqref{lema3.3-eq5} the desired result is obtained.

\begin{proof}[(ii)] Let
\begin{equation}\label{lema3.3-eq6}
K(x,z)=
C_{\mu}x^{-\mu-1/2}z^{-\mu-1/2}
\int_{\Rnm} f(y)\:\DZ_{\mu}(x,y,z)\;dy
\end{equation}

\begin{align*}
\int_{\Rnm} & |K(x,z)|\;s(x)\;dx =
\int_{\Rnm}\left|
C_{\mu}x^{-\mu-1/2}z^{-\mu-1/2}
\int_{\Rnm} f(y)\:\DZ_{\mu}(x,y,z)\;dy
\right|\frac{x^{2\mu+1}}{C_{\mu}} dx\\
&\leq
\int_{\Rnm}z^{-\mu-1/2}
\left\{\int_{\Rnm}
|f(y)|\;\DZ_{\mu}(x,y,z)\;dy
\right\} x^{\mu+1/2} dx\\
&=
\int_{\Rnm}\left\{
\int_{\Rnm} z^{-\mu-1/2}|f(y)|x^{\mu+1/2}\DZ_{\mu}(x,y,z)\;dy
\right\}dx\\
&=
\int_{\Rnm}\left\{
\int_{\Rnm} z^{-\mu-1/2}|f(y)|x^{\mu+1/2}\DZ_{\mu}(x,y,z)\;dx
\right\}dy\\
& =
\int_{\Rnm}
z^{-\mu-1/2}|f(y)|\left\{
\int_{\Rnm} x^{\mu+1/2}\DZ_{\mu}(x,y,z)\;dx\right\}dy\\
& =
\int_{\Rnm}
z^{-\mu-1/2}|f(y)|\:C_{\mu}^{-1} y^{\mu+1/2}z^{\mu+1/2} dy
= \|f\|_{L^{1}(sr)}<\infty
\end{align*}
Thus
\begin{equation}\label{lema3.3-eq9}
\int_{\Rnm}|K(x,z)|\;s(x)\;dx\leq \|f\|_{L^{1}(sr)}=\|rf\|_{L^{1}(s)}
\end{equation}
and similarly
\begin{equation}\label{lema3.3-eq10}
\int_{\Rnm}|K(x,z)|\;s(z)\;dz\leq \|f\|_{L^{1}(sr)}=\|rf\|_{L^{1}(s)}.
\end{equation}
So, from \cite[Theorem 6.18 - pp.193]{Fo99} if  $h\in L^{p}(s)$ then \[
Th(x)=\int_{\Rnm}h(z)\;K(x,z)\;s(z)\;dz
\]
\noindent exists for almost every $x\in\Rnm$ and
\[
\|Th\|_{L^{p}(s)}\leq \|rf\|_{L^{1}(s)} \|h\|_{L^{p}(s)}
\]
In particular, for $h=rg$, since $g\in L^{p}(sr^p)$
\[
\|g\|_{L^{p}(sr^p)}^{p}=
\int_{\Rnm}|g(z)|^{p}s(z)r^{p}(z)\;dz=
\int_{\Rnm}|r(z)g(z)|^{p}s(z)(z)\;dz=
\|rg\|_{L^{p}(s)},
\]
\noindent then $h\in L^{p}(s)$.

\begin{align*}
T(h)(x)=T(rg)(x)
&=
\int_{\Rnm}\int_{\Rnm}
r(z)g(z)\:c_{\mu}x^{-\mu-1/2}x^{-\mu-1/2}
f(y)\DZ_{\mu}(x,y,z)\:s(z)\:dy\:dz\\
&=x^{-\mu-1/2}\int_{\Rnm}\int_{\Rnm}
f(y)\:g(z)\:\DZ_{\mu}(x,y,z)\:dy\:dz\\
&=x^{-\mu-1/2} f\convZ g(x)
\end{align*}

\[
\|f\convZ g\|_{L^{p}(sr^p)}=
\|r(f\convZ g)\|_{L^{p}(s)}=
\|T(rg)\|_{L^{p}(s)}\leq
\|rf\|_{L^{1}(s)} \|rg\|_{L^{p}(s)}=
\|f\|_{L^{1}(sr)} \|g\|_{L^{p}(sr^p)}
\]
\phantom{\qedhere}
\end{proof}

\end{proof}

\vspace{.2in}
Hirschman defined in  \cite{Hi60} for the $1$-dimensional case a kernel  $\mathfrak{D}_{\alpha}$ which is defined for  $u,v,w\in (0,\infty)$, $\alpha>-\frac{1}{2}$, by
\begin{equation}\label{DH}
\mathfrak{D}_{\alpha}(u,v,w)=
\frac{2^{3\alpha-1}\Gamma^{2}(\alpha+1)}{\Gamma(\alpha+1/2)\sqrt{\pi}} (uvw)^{-2\alpha}A(u,v,w)^{2\alpha-1}
\end{equation}

\noindent where  $A(u,v,w)$ is the area of  a triangle of sides $u,v,w\in\Rm$ defined by \eqref{Área-lados}.

\vspace{.2in}

For the  $n$-dimensional case, let $x,y,z\in\Rnm$ and $\mu=(\mu_1,\ldots,\mu_n)$ such that $\mu_{i}>-\frac{1}{2}$ for all  $i=1,\ldots,n$. We define

\begin{equation}\label{nDH} % núcleo de Hirshman
\DHI_{\mu}(x,y,z)=
\prod_{i=1}^{n} \mathfrak{D}_{\mu_i}(x,y,z)
\end{equation}
where  $\mathfrak{D}_{\mu_i}$ is given by  \eqref{DH}.

\vspace{.2in}
A convolution operation associated to the $n$-dimensional Hankel transform $\nHH_{\mu}$ can be defined. Given $f,g$ defined on $\Rnm$, the Hankel convolution associated to the transformation  $\nHH_{\mu}$ is defined formally by

\begin{equation}\label{nconv_H}
f\convH g(x)=\int_{\Rnm}\int_{\Rnm}
f(y)\:g(z)\:\DHI_{\mu}(x,y,z)\:s(y)\:s(z)\:dy\:dz
\end{equation}

\noindent where  $x,y,z\in\Rnm$.

\begin{remark}[Relation between $\DZ_{\mu}$ and $\DHI_{\mu}$]
\begin{equation}\label{}
\DZ_{\mu}(x,y,z)=
C_{\mu}^{-2}(xyz)^{1/2-\mu}\DHI_{\mu}(x,y,z)
\end{equation}
\noindent where $\DHI_{\mu}(x,y,z)$ is given by  \eqref{nDH} and $\DZ_{\mu}(x,y,z)$ is given by \eqref{nDZ}.
\end{remark}

\begin{proposition}\label{prop_nDH} In this proposition we summarize some properties for the kernel $\DHI_{\mu}(x,y,z)$ given by  \eqref{nDH}.
\begin{enumerate}
\item[(i)] $\DHI_{\mu}(x,y,z)> 0$.
\item[(ii)] $\int_{\Rnm}\DHI_{\mu}(x,y,z)
       \prod\limits_{i=1}^{n}\left\{(z_{i}t_{i})^{-\mu_{i}}
       J_{\mu_{i}}(z_{i}t_{i})
       \right\} s(z)\:dz=
       C_{\mu}\prod\limits_{i=1}^{n}\left\{
       (x_{i}t_{i})^{-\mu_{i}}J_{\mu_{i}}(x_{i}t_{i})
       \right\}
       \prod\limits_{i=1}^{n}\left\{
       (y_{i}t_{i})^{-\mu_{i}}J_{\mu_{i}}(y_{i}t_{i})
       \right\}$
\item[(iii)] $\int_{\Rnm}\DHI_{\mu}(x,y,z)\:s(z)\:dz = 1$
\end{enumerate}
\noindent where $x,y,z,t\in\Rnm$ and $J_{\mu_{i}}$ denotes the well known Bessel function of first kind and order  $\mu_{i}$ given by \eqref{Ap-bessel} for all $i=1,\ldots,n$.
\end{proposition}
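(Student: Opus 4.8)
The plan is to obtain all three assertions from the corresponding statements for the Delsarte kernel $\DZ_\mu$ in Proposition \ref{prop-nDZ}, by means of the pointwise identity relating the two kernels. Comparing \eqref{DZ} with \eqref{DH} factor by factor (and recalling \eqref{nDZ}, \eqref{nDH}), one checks directly that the numerical constant in \eqref{DH} is $C_\mu^2$ times the one in \eqref{DZ} and that the exponent of $uvw$ is lowered by $\mu+1/2$, whence
\[
\DHI_\mu(x,y,z)=C_\mu^2\,(xyz)^{-\mu-1/2}\,\DZ_\mu(x,y,z),\qquad x,y,z\in\Rnm,
\]
where $(xyz)^{-\mu-1/2}=\prod_{i=1}^n(x_iy_iz_i)^{-\mu_i-1/2}$ (this is the identity recorded in the preceding Remark). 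Since $C_\mu>0$ and the power factor is strictly positive on $\Rnm$, item $(i)$ is then immediate from Proposition \ref{prop-nDZ}$(i)$: $\DHI_\mu\ge0$, and $\DHI_\mu>0$ precisely on the set where $\DZ_\mu>0$, i.e. where each triple $(x_i,y_i,z_i)$ forms a non-degenerate triangle.

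For item $(iii)$ I would substitute this identity together with $s(z)=z^{2\mu+1}/C_\mu$, so that the powers of $z$ combine to $z^{\mu+1/2}$,
\[
\int_{\Rnm}\DHI_\mu(x,y,z)\,s(z)\,dz=C_\mu\,x^{-\mu-1/2}y^{-\mu-1/2}\int_{\Rnm}z^{\mu+1/2}\,\DZ_\mu(x,y,z)\,dz,
\]
and apply Proposition \ref{prop-nDZ}$(iii)$: its value $C_\mu^{-1}x^{\mu+1/2}y^{\mu+1/2}$ cancels the powers of $x,y$ and leaves $C_\mu\cdot C_\mu^{-1}=1$. Integrability is inherited verbatim, since the integrand equals, up to the fixed factor $C_\mu x^{-\mu-1/2}y^{-\mu-1/2}$, exactly $z^{\mu+1/2}\DZ_\mu(x,y,z)$. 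For item $(ii)$, using $\prod_{i=1}^n(z_it_i)^{-\mu_i}J_{\mu_i}(z_it_i)=\big(\prod_{i=1}^n(z_it_i)^{-\mu_i-1/2}\big)\prod_{i=1}^n\sqrt{z_it_i}J_{\mu_i}(z_it_i)$ and inserting the kernel identity and $s(z)$ makes the powers of $z$ cancel completely:
\[
\int_{\Rnm}\DHI_\mu(x,y,z)\prod_{i=1}^n\{(z_it_i)^{-\mu_i}J_{\mu_i}(z_it_i)\}\,s(z)\,dz=C_\mu\,x^{-\mu-1/2}y^{-\mu-1/2}t^{-\mu-1/2}\int_{\Rnm}\DZ_\mu(x,y,z)\prod_{i=1}^n\{\sqrt{z_it_i}J_{\mu_i}(z_it_i)\}\,dz.
\]
Proposition \ref{prop-nDZ}$(ii)$ evaluates the last integral as $t^{-\mu-1/2}\prod_{i=1}^n\{\sqrt{x_it_i}J_{\mu_i}(x_it_i)\}\prod_{i=1}^n\{\sqrt{y_it_i}J_{\mu_i}(y_it_i)\}$, and regrouping the resulting powers (each $\sqrt{x_it_i}$ absorbs $x^{-\mu-1/2}$ and one factor $t^{-\mu-1/2}$ to rebuild $(x_it_i)^{-\mu_i}$, and likewise for $y$) yields exactly $C_\mu\prod_{i=1}^n\{(x_it_i)^{-\mu_i}J_{\mu_i}(x_it_i)\}\prod_{i=1}^n\{(y_it_i)^{-\mu_i}J_{\mu_i}(y_it_i)\}$, which is $(ii)$.

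As an alternative, since $\DHI_\mu$, $s$ and the Bessel products all factor over the index $i$, one may reduce $(ii)$ and $(iii)$ to their one-dimensional versions for $\mathfrak{D}_\alpha$ and prove those either through the one-dimensional relation $D_\alpha(u,v,w)=C_\alpha^{-2}(uvw)^{\alpha+1/2}\mathfrak{D}_\alpha(u,v,w)$, or directly by the substitution $w=\sqrt{u^2+v^2-2uv\cos\theta}$ together with \eqref{Cilindrícas} and \eqref{integral sin}, exactly as in the proof of Proposition \ref{prop-nDZ} in the Appendix. I do not expect a genuine obstacle; the only delicate point is the bookkeeping of the exponents of $x,y,z,t$ and of the constants $C_\mu$ — in particular, confirming that the kernel relation carries the exponent $\mu+1/2$ rather than $1/2-\mu$ — after which the three items collapse to Proposition \ref{prop-nDZ}.
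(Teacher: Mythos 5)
Your argument is correct, but it reaches the conclusion by a different route than the paper. You transport Proposition \ref{prop-nDZ} wholesale through the pointwise kernel relation $\DHI_{\mu}(x,y,z)=C_{\mu}^{2}(xyz)^{-\mu-\frac{1}{2}}\DZ_{\mu}(x,y,z)$: your exponent bookkeeping in (ii) and (iii) checks out (the powers of $z$ cancel exactly, and $C_{\mu}\cdot C_{\mu}^{-1}=1$ in (iii)), and your exponent $\mu+\tfrac12$ is indeed the one consistent with \eqref{DZ} and \eqref{DH} --- the unnumbered Remark in the Appendix, which records the relation with $(xyz)^{1/2-\mu}$, has a typo, and your version is the one that makes the constants come out. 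The paper instead argues componentwise: for (ii) it factors the integrand over the indices $i$ and treats each one-dimensional integral by the same change of variables and identity \eqref{Cilindrícas} used for Proposition \ref{prop-nDZ}; for (iii) it does not substitute into Proposition \ref{prop-nDZ}(iii) at all, but first notes $\DHI_{\mu}(x,y,\cdot)\in L^{1}(\Rnm,s)$, bounds the Bessel product by $C_{\mu}^{-1}$ via \eqref{cota Bessel}, and obtains (iii) from (ii) by dominated convergence (letting $t\to 0$). Your derivation of (iii) is more direct and avoids that limiting argument, at the price of relying on the corrected kernel relation; the paper's route has the advantage of not depending on that relation at all. One cosmetic caveat, which affects the statement rather than your proof: as you note, one only gets $\DHI_{\mu}\geq 0$, with strict positivity exactly where each triple $(x_i,y_i,z_i)$ forms a nondegenerate triangle, so item (i) as stated with a strict inequality for all $x,y,z\in\Rnm$ is an overstatement that neither you nor the paper actually proves (the paper's proof simply omits (i)).
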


\begin{proof}
The proof of  $(ii)$ It is analogous to that of the Proposition \ref{prop-nDZ}, and it will be enough to observe that
\[
\DHI_{\mu}(x,y,z)
\prod_{i=1}^{n}
\left\{(z_{i}t_{i})^{-\mu_{i}}
J_{\mu_{i}}(z_{i}t_{i})\right\}z^{2\mu+1}=
\prod_{i=1}^{n}\left\{
(z_{i}t_{i})^{-\mu_{i}} J_{\mu_{i}}(z_{i}t_{i}) \mathfrak{D}_{\mu_i}(x_i,y_i,z_i)z_{i}^{2\mu_{i}+1} \right\},
\]
\noindent is a product of functions in $z_{i}$ which are integrables in  $\Rm$.

To see $(iii)$, let us note that since $\mathfrak{D}_{\mu_i}(x_i,y_i,z_i)\in L^{1}((0,\infty), s(z_{i}))$ for all $i=1,\ldots,n\:$ then
\[\DHI_{\mu}(x,y,z)\in L^{1}(\Rnm, s(z)),\]
and
\[\left|
\DHI_{\mu}(x,y,z)\prod_{i=1}^{n}\left\{(z_{i}t_{i})^{-\mu_{i}}
J_{\mu_{i}}(z_{i}t_{i})\right\}
\right|
\leq \frac{1}{C_{\mu}}\:\DHI_{\mu}(x,y,z).\]
The result follows from  the Dominated Convergence Theorem.
\end{proof}

\begin{theorem}\label{teo1}
Let $\{\phi_{m}\}\subset L^{1}(\Rnm,s)$ a sequence of functions such that:
\begin{enumerate}
\item[(1)] $\phi_{m}(x)\geq 0$ in $\Rnm$.
\item[(2)] $\int_{\Rnm} \phi_{m}(x)\,s(x)\:dx = 1$
           for all $m\in\N$,
\item[(3)] For all  $\eta>0$,
$\lim\limits_{m\to\infty}
\int_{\|x\|>\eta} \phi_{m}(x)\,s(x)\,dx=0$.
\end{enumerate}
If $f\in L^{1}(\Rnm,s)$ then
$\lim\limits_{n\to \infty}
\|f\convH \phi_{m}-f\|_{L^{1}(\Rnm,s)}=0$.
\end{theorem}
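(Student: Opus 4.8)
The plan is to treat $\{\phi_{m}\}$ as an approximate identity for $\convH$, arguing along the lines of the proof of Lemma \ref{lema3.5} but with the weight $s$ and the space $L^{1}(\Rnm,s)$ in place of $r$ and $L^{\infty}(\Rnm,r)$, and then to pass from a dense subclass to all of $L^{1}(\Rnm,s)$. Two preliminary remarks are needed. First, since the kernel $\mathfrak{D}_{\mu_i}$ in \eqref{DH} depends on $(u,v,w)$ only through the permutation–symmetric quantities $(uvw)^{-2\mu_i}$ and $A(u,v,w)^{2\mu_i-1}$, the kernel $\DHI_{\mu}(x,y,z)$ is symmetric under permutations of $x,y,z$; together with Proposition \ref{prop_nDH}(iii) this gives $\int_{\Rnm}\DHI_{\mu}(x,y,z)\,s(y)\,dy=\int_{\Rnm}\DHI_{\mu}(x,y,z)\,s(z)\,dz=1$. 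Second, by Tonelli's theorem (exactly as in Lemma \ref{lema3.3}(ii) with $p=1$) together with these identities, for every $h\in L^{1}(\Rnm,s)$ the convolution $h\convH\phi_{m}$ is defined a.e., belongs to $L^{1}(\Rnm,s)$, and $\|h\convH\phi_{m}\|_{L^{1}(\Rnm,s)}\le\|\phi_{m}\|_{L^{1}(\Rnm,s)}\|h\|_{L^{1}(\Rnm,s)}=\|h\|_{L^{1}(\Rnm,s)}$; thus the maps $h\mapsto h\convH\phi_{m}$ are contractions on $L^{1}(\Rnm,s)$, uniformly in $m$.

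Next I would prove the conclusion for $g\in\mathcal{D}(\Rnm)$. Using $\int_{\Rnm}\DHI_{\mu}(x,y,z)s(y)\,dy=1$ and hypothesis (2) one writes $g\convH\phi_{m}(x)-g(x)=\int_{\Rnm}\int_{\Rnm}\phi_{m}(z)\,\bigl(g(y)-g(x)\bigr)\,\DHI_{\mu}(x,y,z)\,s(y)s(z)\,dy\,dz$, whence
\[
\|g\convH\phi_{m}-g\|_{L^{1}(\Rnm,s)}\le\int_{\Rnm}\phi_{m}(z)\Bigl(\int_{\Rnm}\int_{\Rnm}|g(y)-g(x)|\,\DHI_{\mu}(x,y,z)\,s(x)s(y)\,dx\,dy\Bigr)s(z)\,dz .
\]
I would then split the $z$–integration at $\|z\|=\eta$. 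On $\{\|z\|>\eta\}$, bounding $|g(y)-g(x)|\le|g(y)|+|g(x)|$ and integrating each term with the aid of the two identities above shows the inner double integral is at most $2\|g\|_{L^{1}(\Rnm,s)}$, uniformly in $z$, so this part is $\le 2\|g\|_{L^{1}(\Rnm,s)}\int_{\|z\|>\eta}\phi_{m}(z)s(z)\,dz\to0$ by hypothesis (3). On $\{\|z\|\le\eta\}$ I use uniform continuity of $g$: given $\eps>0$ there is $\delta>0$ with $|g(x)-g(y)|<\eps$ whenever $\|x-y\|<\delta$. As in Lemma \ref{lema3.5}, $\DHI_{\mu}(x,y,z)\ne0$ forces $|y_i-x_i|<z_i$ for every $i$, hence $\|y-x\|\le\sqrt{n}\,\|z\|\le\sqrt{n}\,\eta$; moreover the integrand vanishes unless $g(x)\ne0$ or $g(y)\ne0$, so (taking $\eta\le1$) unless both $x$ and $y$ lie in a fixed compact set $K$ depending only on $g$. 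Thus, for $\eta<\delta/\sqrt{n}$, this part is at most $\eps\int_{\|z\|\le\eta}\phi_{m}(z)\bigl(\int_{K}\int_{\Rnm}\DHI_{\mu}(x,y,z)s(y)\,dy\;s(x)\,dx\bigr)s(z)\,dz\le\eps\int_{K}s(x)\,dx$, uniformly in $m$. Choosing first $\eta$ small and then $m$ large yields $\|g\convH\phi_{m}-g\|_{L^{1}(\Rnm,s)}\to0$.

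Finally, for general $f\in L^{1}(\Rnm,s)$ I would pick $g\in\mathcal{D}(\Rnm)$ with $\|f-g\|_{L^{1}(\Rnm,s)}<\eps$, which is possible because $\mathcal{D}(\Rnm)$ is dense in $L^{1}(\Rnm,s)$ ($s$ being a locally integrable positive weight), and combine the triangle inequality with the uniform contraction bound of the first paragraph: $\|f\convH\phi_{m}-f\|_{L^{1}(\Rnm,s)}\le\|(f-g)\convH\phi_{m}\|_{L^{1}(\Rnm,s)}+\|g\convH\phi_{m}-g\|_{L^{1}(\Rnm,s)}+\|g-f\|_{L^{1}(\Rnm,s)}\le2\eps+\|g\convH\phi_{m}-g\|_{L^{1}(\Rnm,s)}$, so letting $m\to\infty$ and then $\eps\to0$ finishes the proof. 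The main obstacle is precisely that $s\,dx$ is an infinite measure, so the crude estimate $|g(y)-g(x)|\le\eps$ on $\{\|z\|\le\eta\}$ would produce $\eps\cdot\infty$; the key point is to exploit the compact support of $g$ together with the triangle–inequality support property of $\DHI_{\mu}$ (as in Lemma \ref{lema3.5}) to confine the $x,y$ integration to a fixed compact set once $\|z\|$ is small. Everything else is a routine application of Tonelli's theorem.
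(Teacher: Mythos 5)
Your proof is correct, but it is a genuinely different route from the paper's: the paper does not prove Theorem \ref{teo1} at all, it simply declares it to be the $n$-dimensional generalization of Hirschman's Corollary 2c in \cite{Hi60}, whose one-dimensional argument rests on the generalized (Hankel) translation operator and the continuity of translation in $L^{1}$. You instead give a self-contained approximate-identity argument: the permutation symmetry of $\DHI_{\mu}$ plus Proposition \ref{prop_nDH}(iii) yields the normalizations $\int\DHI_{\mu}(x,y,z)s(y)\,dy=\int\DHI_{\mu}(x,y,z)s(x)\,dx=1$, Tonelli then gives the uniform contraction bound $\|h\convH\phi_{m}\|_{L^{1}(s)}\le\|h\|_{L^{1}(s)}$, and the convergence is first established for $g\in\mathcal{D}(\Rnm)$ by splitting at $\|z\|=\eta$ and exploiting the triangle-inequality support of the kernel (componentwise $|x_i-y_i|<z_i$, in fact giving $\|x-y\|\le\|z\|$, slightly better than your $\sqrt{n}\,\|z\|$) to confine $x,y$ to a fixed compact set where $\int s<\infty$ — which is exactly the point needed because $s\,dx$ is an infinite measure — before passing to general $f$ by density and the $3\eps$ argument. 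In substance your treatment of the compactly supported case is a disguised proof of $L^{1}(s)$-continuity of the Hankel translation on a dense class, so it recovers what Hirschman's route provides, with the advantage of being verifiable within the paper's own toolkit (the same support and normalization identities used in Lemma \ref{lema3.5} and Proposition \ref{prop_nDH}) and of making explicit that the product structure of $\DHI_{\mu}$ causes no difficulty in dimension $n$; the paper's citation is shorter but leaves that generalization entirely to the reader. Only cosmetic points deserve attention: state explicitly that $g$, extended by zero, is uniformly continuous on $\R^{n}$, and that $\int_{K}s<\infty$ because $2\mu_i+1>0$; both are immediate.
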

\begin{proof}
 This result is a $n$-dimensional generalization of \cite[Corollary 2c]{Hi60}, relative to  approximate identities.
\end{proof}

\begin{lemma}\label{Cap3-Prop3}
Let $f,g$ be functions in  $L^{1}(\Rnm,s)$, then
  \begin{displaymath}
  \int_{\Rnm} \nHH_{\mu}f(t)\,g(t)\,s(t)\,dt=
  \int_{\Rnm} f(t)\,\nHH_{\mu}g(t)\,s(t)\,dt.
  \end{displaymath}
\end{lemma}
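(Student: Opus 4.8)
The plan is to deduce the identity directly from the symmetry of the Hankel kernel together with Fubini's theorem, whose hypotheses I will check using the uniform bound \eqref{cota Bessel} on the normalized Bessel function. First I would fix the notation $K(x,y)=\prod_{i=1}^{n}(x_iy_i)^{-\mu_i}J_{\mu_i}(x_iy_i)$ for the kernel in \eqref{nHH}, so that $\nHH_{\mu}f(y)=\int_{\Rnm}f(x)\,K(x,y)\,x^{2\mu+1}\,dx = C_{\mu}\int_{\Rnm}f(x)\,K(x,y)\,s(x)\,dx$, where I have used $s(x)=x^{2\mu+1}/C_{\mu}$ from \eqref{peso-s}. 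The only structural fact I need is the obvious symmetry $K(x,y)=K(y,x)$. From \eqref{cota Bessel} one gets the uniform bound $|K(x,y)|\le M:=\prod_{i=1}^{n}\bigl(2^{\mu_i}\Gamma(\mu_i+1)\bigr)^{-1}$ for all $x,y\in\Rnm$; in particular, for $f\in L^{1}(\Rnm,s)$ the function $\nHH_{\mu}f$ is everywhere defined and bounded by $MC_{\mu}\|f\|_{L^{1}(\Rnm,s)}$, and similarly for $g$, so both sides of the asserted equality are absolutely convergent integrals.

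Next I would write the left-hand side as the iterated integral $\int_{\Rnm}\bigl(\int_{\Rnm}f(x)\,K(x,t)\,x^{2\mu+1}\,dx\bigr)g(t)\,s(t)\,dt$ and verify that the corresponding double integral of absolute values is finite: using $|K|\le M$ it is bounded by $M\int_{\Rnm}|f(x)|\,x^{2\mu+1}\,dx\int_{\Rnm}|g(t)|\,s(t)\,dt = MC_{\mu}\|f\|_{L^{1}(\Rnm,s)}\|g\|_{L^{1}(\Rnm,s)}<\infty$. Tonelli's theorem then licenses interchanging the order of integration, giving $\int_{\Rnm}f(x)\,x^{2\mu+1}\bigl(\int_{\Rnm}K(x,t)\,g(t)\,s(t)\,dt\bigr)dx$. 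Invoking $K(x,t)=K(t,x)$ and $t^{2\mu+1}=C_{\mu}s(t)$, the inner integral equals $C_{\mu}^{-1}\nHH_{\mu}g(x)$, and since $x^{2\mu+1}C_{\mu}^{-1}=s(x)$ the whole expression collapses to $\int_{\Rnm}f(x)\,\nHH_{\mu}g(x)\,s(x)\,dx$, which is the right-hand side.

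I do not anticipate any genuine obstacle: this is the standard self-transpose computation for an integral transform with a symmetric kernel, and it mirrors Lemma \ref{lema3.4} in spirit. The single point that must be handled with care is the absolute-integrability estimate needed to apply Fubini, and this is supplied precisely by the boundedness of $z^{-\alpha}J_{\alpha}(z)$ recorded in \eqref{cota Bessel} combined with the hypothesis $f,g\in L^{1}(\Rnm,s)$.
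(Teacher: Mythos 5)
Your proof is correct: the paper states Lemma \ref{Cap3-Prop3} without proof, and your argument — symmetry of the kernel $K(x,y)=\prod_{i}(x_iy_i)^{-\mu_i}J_{\mu_i}(x_iy_i)$, the uniform bound from \eqref{cota Bessel} (valid since each $\mu_i>-\tfrac12$), and Tonelli/Fubini to justify the interchange — is exactly the standard transposition argument the paper implicitly relies on. All the estimates check out, including $|\nHH_{\mu}f|\le MC_{\mu}\|f\|_{L^{1}(\Rnm,s)}$ and the finiteness of the double integral, so there is nothing to add.
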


\begin{theorem}\label{TeoInv-nHH}
If $f(x)\in L^1(\Rnm,s)$ and $\nHH_{\mu} f(t)\in L^1(\Rnm,s)$ then $f(x)$ may be redefined on a set of
measure zero so that it is continuous in $x\in\Rnm$, and then

\begin{equation}\label{FormulaInv-nHH}
f(x)=\int_{\Rnm} \nHH_{\mu}f(t)
\left\{\prod_{i=1}^{n}
(x_it_i)^{-\mu_i}\,J_{\mu_i}(x_it_i)\,t_i^{2\mu_i+1}
\right\} dt
\end{equation}
for almost every  $x\in\Rnm$.
\end{theorem}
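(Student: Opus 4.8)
The plan is to deduce the statement from the inversion theorem for $\nHZ_{\mu}$ already available, Theorem \ref{TeoInv-nHZ}, by means of the elementary intertwining relation between the two $n$-dimensional Hankel transforms. First I would record that identity: writing, in the kernel of \eqref{nHZ}, $\sqrt{x_iy_i}\,J_{\mu_i}(x_iy_i)=y_i^{\mu_i+1/2}\,x_i^{-\mu_i-1/2}\,[(x_iy_i)^{-\mu_i}J_{\mu_i}(x_iy_i)\,x_i^{2\mu_i+1}]$ and recalling $r(x)=x^{-\mu-1/2}$ from \eqref{peso-r}, a direct manipulation of \eqref{nHZ} and \eqref{nHH} gives
\[
\nHZ_{\mu}\phi=r^{-1}\,\nHH_{\mu}(r\phi),\qquad\text{equivalently}\qquad \nHH_{\mu}\psi=r\,\nHZ_{\mu}(r^{-1}\psi),
\]
whenever the integrals converge absolutely. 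As $r$ is smooth and strictly positive on $\Rnm$, multiplication by $r^{\pm1}$ sends continuous functions to continuous functions and preserves almost-everywhere statements, so this reduction loses nothing.

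Given $f\in L^{1}(\Rnm,s)$ with $\nHH_{\mu}f\in L^{1}(\Rnm,s)$, I would set $g:=r^{-1}f=x^{\mu+1/2}f$. Since $s(x)=x^{2\mu+1}/C_{\mu}$ one has $\int_{\Rnm}|g|\,x^{\mu+1/2}\,dx=C_{\mu}\|f\|_{L^{1}(\Rnm,s)}<\infty$, and by the intertwining identity $\nHZ_{\mu}g=r^{-1}\nHH_{\mu}(rg)=r^{-1}\nHH_{\mu}f=x^{\mu+1/2}\nHH_{\mu}f$, so $\int_{\Rnm}|\nHZ_{\mu}g|\,x^{\mu+1/2}\,dx=C_{\mu}\|\nHH_{\mu}f\|_{L^{1}(\Rnm,s)}<\infty$. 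Hence $g,\nHZ_{\mu}g\in L^{1}(\Rnm,x^{\mu+1/2})$, and Theorem \ref{TeoInv-nHZ} provides a representative of $g$ continuous on $\Rnm$ with $g=\nHZ_{\mu}(\nHZ_{\mu}g)$ a.e. Applying the intertwining identity twice, $\nHZ_{\mu}(\nHZ_{\mu}g)=\nHZ_{\mu}(r^{-1}\nHH_{\mu}f)=r^{-1}\nHH_{\mu}(\nHH_{\mu}f)$, so $f=rg=\nHH_{\mu}(\nHH_{\mu}f)$ a.e.; writing out $\nHH_{\mu}(\nHH_{\mu}f)(x)$ from \eqref{nHH} is precisely \eqref{FormulaInv-nHH}, and $f=rg$ is continuous on $\Rnm$ after the same null-set redefinition. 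On this route the only thing to watch is the weight bookkeeping between $x^{\mu+1/2}$ and $s$; the real content is imported from Theorem \ref{TeoInv-nHZ}.

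Should a proof internal to the $\nHH_{\mu}$-setting be preferred (using the apparatus built in the Appendix for $\convH$), I would instead imitate the classical $L^{1}$ Fourier inversion. For $\epsilon>0$ put $G_{\epsilon}(z)=\epsilon^{-(\mu_1+\cdots+\mu_n)-n}e^{-\|z\|^{2}/(2\epsilon)}$: factorizing over coordinates and applying \eqref{Prop3.14-eq1} gives $\nHH_{\mu}G_{\epsilon}(t)=e^{-\epsilon\|t\|^{2}/2}$, while \eqref{ApA-eq2} gives $\int_{\Rnm}G_{\epsilon}\,s\,dx=1$, so $\{G_{\epsilon}\}$ is an approximate identity in the sense of Theorem \ref{teo1}. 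One then studies $I_{\epsilon}(x)=\int_{\Rnm}\nHH_{\mu}f(t)\,e^{-\epsilon\|t\|^{2}/2}\prod_{i=1}^{n}\{(x_it_i)^{-\mu_i}J_{\mu_i}(x_it_i)\,t_i^{2\mu_i+1}\}\,dt$, absolutely convergent by \eqref{cota Bessel}. Expanding $\nHH_{\mu}f$, interchanging integrations (justified by \eqref{cota Bessel}, $f\in L^{1}(\Rnm,s)$ and integrability of the Gaussian), and integrating the product formula Proposition \ref{prop_nDH}(ii) against $e^{-\epsilon\|t\|^{2}/2}t^{2\mu+1}\,dt$ — the resulting inner $t$-integral being computed by \eqref{Prop3.14-eq1} — identifies $I_{\epsilon}=f\convH G_{\epsilon}$ in the sense of \eqref{nconv_H}; therefore $I_{\epsilon}\to f$ in $L^{1}(\Rnm,s)$ by Theorem \ref{teo1}. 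Simultaneously, since $\nHH_{\mu}f\in L^{1}(\Rnm,s)$ and $e^{-\epsilon\|t\|^{2}/2}\to1$, dominated convergence yields $I_{\epsilon}(x)\to F(x):=\int_{\Rnm}\nHH_{\mu}f(t)\prod_{i=1}^{n}\{(x_it_i)^{-\mu_i}J_{\mu_i}(x_it_i)\,t_i^{2\mu_i+1}\}\,dt$ for every $x$, with $F$ continuous on $\Rnm$. Passing to a subsequence of $\epsilon\to0$ along which the $L^{1}$-convergence is pointwise a.e.\ then forces $f=F$ a.e., which is \eqref{FormulaInv-nHH} with $F$ as the continuous representative. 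The step I expect to require the most care is exactly this last reconciliation: the approximate-identity bound controls only $\|I_{\epsilon}-f\|_{L^{1}(\Rnm,s)}$, while the pointwise limit is the continuous function $F$, and one must pass between the two to extract the pointwise inversion formula together with continuity of the representative; in the reduction of the first two paragraphs this difficulty is absorbed entirely into Theorem \ref{TeoInv-nHZ}.
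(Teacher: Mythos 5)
Your preferred first route is circular in the context of this paper. The inversion theorem for $\nHZ_{\mu}$, Theorem \ref{TeoInv-nHZ}, is not an independent input here: its proof, given in the Appendix, consists precisely of applying Theorem \ref{TeoInv-nHH} to $x^{-\mu-1/2}f$ through the same intertwining identity $r\,\nHZ_{\mu}\phi=\nHH_{\mu}(r\phi)$ that you invoke, read in the opposite direction. So deducing Theorem \ref{TeoInv-nHH} from Theorem \ref{TeoInv-nHZ} presupposes the very statement being proved. Your identity and the weight bookkeeping are correct, and they do show the two inversion theorems are equivalent under multiplication by $r^{\pm 1}$, but that reduction cannot stand as the proof of Theorem \ref{TeoInv-nHH} unless an independent proof of Theorem \ref{TeoInv-nHZ} is supplied, which the paper does not do.

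Your second, self-contained argument is essentially the paper's own proof, and it is correct. The paper takes $\phi_{m}(x)=m^{|\mu+1|}e^{-m\|x\|^{2}/2}$, which is your $G_{\epsilon}$ with $\epsilon=1/m$; it identifies $\phi_{m}\convH f(x)$ with the damped inversion integral (your $I_{\epsilon}(x)$) by means of Lemma \ref{Cap3-Prop3}, Proposition \ref{prop_nDH}(ii) and \eqref{Prop3.14-eq1}, with the same Fubini justification you indicate; it then lets $m\to\infty$ using dominated convergence on the damped integral (giving the continuous limit $\nHH_{\mu}(\nHH_{\mu}f)$) and Theorem \ref{teo1} for the convergence $\phi_{m}\convH f\to f$ in $L^{1}(\Rnm,s)$; and it concludes exactly as you do, by extracting an a.e.\ convergent subsequence to reconcile the $L^{1}$ limit with the pointwise one. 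Keep that argument as the proof, and either drop the reduction to Theorem \ref{TeoInv-nHZ} or retain it only as a remark on the equivalence of the two inversion theorems.
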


\begin{proof}

We consider the sequence $\{\phi_{m}\}_{m\in\N}$ defined by
\begin{equation}\label{kernel}
\phi_{m}(x)=m^{|\mu+1|}\,e^{-\frac{\|x\|^{2}m}{2}}.
\end{equation}
This sequence verifies conditions $(1)$, $(2)$ and $(3)$ of Theorem \ref{teo1}, then if $f\in L^{1}(\Rnm,s)$,
\[
\lim\limits_{n\to \infty}
\|f\convH \phi_{m}-f\|_{L^{1}(\Rnm,s)}=0.
\]

\noindent Let us show that:

\begin{equation}\label{TeoInv-eq1}
\phi_{m}\convH f(x)=
\int_{\Rnm} \nHH_{\mu}(f)(z)\:
e^{-\frac{\|z\|^2}{2m}}
\left\{\prod_{i=1}^{n}
(x_iz_i)^{-\mu_i}\,J_{\mu_i}(x_iz_i)
\right\}
\,z^{2\mu+1}\,dz.
\end{equation}

\noindent To see this we define
\begin{equation}\label{TeoInv-eq2}
G_{x}(z) = e^{-\frac{\|z\|^2}{2m}}
\left\{\prod_{i=1}^{n}
(x_iz_i)^{-\mu_i}\,J_{\mu_i}(x_iz_i)
\right\}.
\end{equation}

\medskip

\noindent Clearly,   $G_{x}(z) \in L^1(\Rnm,s)$ and from Lemma \ref{Cap3-Prop3} we have

\begin{align}
\int_{\Rnm} &\nHH_{\mu}(f)(z)\:
e^{-\frac{\|z\|^2}{2m}}
\left\{\prod_{i=1}^{n}
(x_iz_i)^{-\mu_i}\,J_{\mu_i}(x_iz_i)
\right\}
\,z^{2\mu+1}\,dz\nonumber\\
&=
\int_{\Rnm} \nHH_{\mu}f(z)\:
G_{x}(z)\,z^{2\mu+1}\,dz\nonumber\\
&=
\int_{\Rnm}f(t)\:
\nHH_{\mu}(G_{x}(z))(t)\,t^{2\mu+1}\,dt\label{TeoInv-eq3}
\end{align}

\noindent Moreover,
\begin{align}
\nHH_{\mu}(G_{x}(z))(t)
&=
\int_{\Rnm} G_{x}(z)
\left\{\prod_{i=1}^{n}
(z_it_i)^{-\mu}\,J_{\mu_i}(z_it_i)\,z_i^{2\mu_i+1}
\right\} dz\nonumber\\
&=
\int_{\Rnm}
e^{-\frac{\|z\|^2}{2m}}
\left\{\prod_{i=1}^{n}
(x_iz_i)^{-\mu_i}\,J_{\mu_i}(x_iz_i)
\right\}
\left\{\prod_{i=1}^{n}
(z_it_i)^{-\mu_i}\,J_{\mu_i}(z_it_i)
\right\}
\,z^{2\mu+1}\:dz\nonumber\\
&=
\int_{\Rnm}
e^{-\frac{\|z\|^2}{2m}}
\left\{
\int_{\Rnm}\DHI_{\mu}(x,t,\xi)
\left\{\prod_{i=1}^{n}
(\xi_iz_i)^{-\mu_i}\,J_{\mu_i}(\xi_iz_i)
\right\} s(\xi)\:d\xi\right\}
s(z)\:dz. \label{TeoInv-eq4}
\end{align}

\noindent Since
\[
\int_{\Rnm}
e^{-\frac{\|z\|^2}{2m}}
\left\{
\int_{\Rnm}\DHI_{\mu}(x,t,\xi)
\left\{\prod_{i=1}^{n}
|(\xi_iz_i)^{-\mu_i}\,J_{\mu_i}(\xi_iz_i)|
\right\} s(\xi)\:d\xi\right\}
s(z)\:dz
\]

\[
\leq
C_{\mu}^{-1} \int_{\Rnm}
e^{-\frac{\|z\|^2}{2m}}
\left\{
\int_{\Rnm}\DHI_{\mu}(x,t,\xi)\:s(\xi)\:d\xi\right\}
s(z)\:dz<\infty
\]

\medskip
\noindent it is possible the change the order of integration in \eqref{TeoInv-eq4}, then

\clearpage
\begin{align*}
\nHH_{\mu}(G_{x}(z))(t)
&=
\int_{\Rnm}
\left\{
\int_{\Rnm}
e^{-\frac{\|z\|^2}{2m}}
\left\{\prod_{i=1}^{n}
(\xi_iz_i)^{-\mu_i}\,J_{\mu_i}(\xi_iz_i)
\right\} s(z)\:dz\right\}
\DHI_{\mu}(x,t,\xi)
s(\xi)\:d\xi\\
&=
C_{\mu}^{-2}
\int_{\Rnm}
\left\{
\int_{\Rnm}
e^{-\frac{\|z\|^2}{2m}}
\left\{\prod_{i=1}^{n}
J_{\mu_i}(\xi_iz_i)\:z_{i}^{\mu_i+1}
\right\} \:dz\right\}
\DHI_{\mu}(x,t,\xi)\:
\xi^{\mu+1}\:d\xi\\
&=
C_{\mu}^{-2}
\int_{\Rnm}
\xi^{\mu}\:m^{|\mu+1|}\:e^{-\frac{\|\xi\|^{2}m}{2}}
\DHI_{\mu}(x,t,\xi)\:
\xi^{\mu+1}\:d\xi\\
&=
C_{\mu}^{-1}
\int_{\Rnm}
m^{|\mu+1|}\:e^{-\frac{\|\xi\|^{2}m}{2}}
\DHI_{\mu}(x,t,\xi)\:
s(\xi)\:d\xi\\
\end{align*}

\noindent where we have used \eqref{Prop3.14-eq1} with $a=1/m$. From the last equality we obtain that

\begin{align}
\int_{\Rnm} &
\nHH_{\mu}(G_{x}(z))(t)\,f(t)\,t^{2\mu+1}\,dt\nonumber\\
&=
\int_{\Rnm}\left\{
C_{\mu}^{-1}
\int_{\Rnm}
m^{|\mu+1|}\:e^{-\frac{\|\xi\|^{2}m}{2}}
\DHI_{\mu}(x,t,\xi)\:
s(\xi)\:d\xi\right\}
f(t)\,t^{2\mu+1}\,dt\nonumber\\
&=
\int_{\Rnm}\left\{
\int_{\Rnm}
m^{|\mu+1|}\:e^{-\frac{\|\xi\|^{2}m}{2}}
\DHI_{\mu}(x,t,\xi)\:
s(\xi)\:d\xi\right\}
f(t)\,s(t)\,dt\nonumber\\
&=\phi_{m}\convH f(x)\label{TeoInv-eq5}.
\end{align}

\noindent From \eqref{TeoInv-eq3} and  \eqref{TeoInv-eq5} to obtain \eqref{TeoInv-eq1}. We may now take limit in \eqref{TeoInv-eq1}, and considering that $\nHH_{\mu}f \in L^{1}(\Rnm,s)$ and

\begin{equation*}
\left|
e^{-\frac{\|z\|^2}{2m}}
\left\{\prod_{i=1}^{n}
(z_ix_i)^{-\mu_i}\,J_{\mu_i}(z_ix_i)\right\}
\nHH_{\mu}(f)(z)
\right|
\leq
C_{\mu}^{-1}\,e^{-\frac{\|z\|^2}{2m}}\,\left|\nHH_{\mu}(f)(z)\right|
\leq
C\,\left|\nHH_{\mu}(f)(z)\right|,
\end{equation*}

\noindent
then we obtain in the right side of \eqref{TeoInv-eq1} by the dominated convergence theorem that
\[
\lim\limits_{m\to\infty} \int_{\Rnm} \nHH_{\mu}(f)(z)\,
e^{-\frac{\|z\|^2}{2m}}
\left\{\prod_{i=1}^{n}
(z_ix_i)^{-\mu_i}\,J_{\mu_i}(z_ix_i)\right\}
\,z^{2\mu+1}\,dz=
\]

\begin{equation}\label{TeoInv-eq8}
\int_{\Rnm} \nHH_{\mu}(f)(z)\,
\left\{\prod_{i=1}^{n}
(z_ix_i)^{-\mu_i}\,J_{\mu_i}(z_ix_i)\right\}
\,z^{2\mu+1}\,dz=
\nHH_{\mu}(\nHH_{\mu}f)(x).
\end{equation}

\noindent So,
\begin{equation}
\lim\limits_{m\to\infty}\phi_{m}\#f(x)=
\nHH_{\mu}(\nHH_{\mu}f)(x).
\end{equation}

\noindent On the other hand, by \eqref{teo1} there exists a subsequence $\{\phi_{m_{k}}\convH f\}_{k \in \N}$ such that
\begin{equation}
\lim\limits_{k\to\infty}
\phi_{m_k}\convH f(x)=f(x)
\end{equation}
for almost every $x\in\Rnm$, and so this completes the proof.
\end{proof}

\medskip
From Theorem \ref{TeoInv-nHH} we can obtain the inversion theorem  for the Hankel transform $\nHZ_{\mu}$ given by \eqref{nHZ}

\begin{proof}[Proof of Theorem \ref{TeoInv-nHZ}]\label{Proof-TeoInv-nHZ}
If $f\in L^{1}(\Rnm,x^{\mu+1/2})$ then  $x^{-\mu-1/2}f\in L^{1}(\Rnm,s)$.

\medskip
\noindent Since
\[x^{-\mu-1/2}\nHZ_{\mu}(f) = \nHH_{\mu}(x^{-\mu-1/2}f)\]
and for hypothesis
\[\nHZ_{\mu}f\in L^{1}(\Rnm,x^{\mu+1/2})\]
then $\nHH_{\mu}(x^{-\mu-1/2}f)\in L^{1}(\Rnm,s)$.

\medskip
\noindent Then the result continues to apply the Theorem \ref{TeoInv-nHH} to $x^{-\mu-1/2}f$ and obtain  \eqref{FormulaInv-nHZ}.
\end{proof}

\begin{lemma}\label{Cap3-Prop3 para nHZ}
Let $f,g$ be functions in  $L^{1}(\Rnm,sr)$, then
  \begin{displaymath}
  \int_{\Rnm} \nHZ_{\mu}f(t)\,g(t)\,dt=
  \int_{\Rnm} f(t)\,\nHZ_{\mu}g(t)\,dt.
  \end{displaymath}
\end{lemma}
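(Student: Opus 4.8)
The plan is to establish this multiplication (Parseval-type) formula by the classical Fubini argument, exploiting the symmetry of the kernel $\prod_{i=1}^{n}\sqrt{x_it_i}J_{\mu_i}(x_it_i)$ of $\nHZ_{\mu}$ together with the uniform bound \eqref{cota Bessel}. First I would check that both sides are well-defined finite numbers: since $f,g\in L^{1}(\Rnm,sr)$, the remark after Lemma~\ref{lema3.2} shows that $\nHZ_{\mu}f$ and $\nHZ_{\mu}g$ are defined at every point of $\Rnm$, and the estimate $\|\nHZ_{\mu}f\|_{L^{\infty}(r)}\le\|f\|_{L^{1}(sr)}$ proved just before Remark~\ref{remark7.4 [Mo18]} gives, using $r^{-1}(t)=t^{\mu+1/2}$ and $\int_{\Rnm}|g(t)|\,t^{\mu+1/2}\,dt=C_{\mu}\|g\|_{L^{1}(sr)}$,
\[
\int_{\Rnm}|\nHZ_{\mu}f(t)\,g(t)|\,dt
\le\|\nHZ_{\mu}f\|_{L^{\infty}(r)}\int_{\Rnm}|g(t)|\,t^{\mu+1/2}\,dt
\le C_{\mu}\,\|f\|_{L^{1}(sr)}\|g\|_{L^{1}(sr)}<\infty,
\]
and symmetrically for $\int_{\Rnm}|f(t)\,\nHZ_{\mu}g(t)|\,dt$.

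Next, I would expand $\nHZ_{\mu}f(t)=\int_{\Rnm}f(x)\prod_{i=1}^{n}\{\sqrt{x_it_i}J_{\mu_i}(x_it_i)\}\,dx$ inside the left-hand integral and verify absolute convergence of the resulting double integral over $\Rnm\times\Rnm$. By \eqref{cota Bessel}, $|(x_it_i)^{-\mu_i}J_{\mu_i}(x_it_i)|\le C_{\mu_i}^{-1}$, so
\[
\prod_{i=1}^{n}|\sqrt{x_it_i}J_{\mu_i}(x_it_i)|
=\prod_{i=1}^{n}(x_it_i)^{\mu_i+1/2}\,|(x_it_i)^{-\mu_i}J_{\mu_i}(x_it_i)|
\le C_{\mu}^{-1}\,x^{\mu+1/2}\,t^{\mu+1/2},
\]
which is the very estimate already used in the remark after Lemma~\ref{lema3.2}. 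Hence
\[
\int_{\Rnm}\!\int_{\Rnm}|f(x)|\,|g(t)|\prod_{i=1}^{n}|\sqrt{x_it_i}J_{\mu_i}(x_it_i)|\,dx\,dt
\le C_{\mu}^{-1}\Big(\int_{\Rnm}|f(x)|\,x^{\mu+1/2}\,dx\Big)\Big(\int_{\Rnm}|g(t)|\,t^{\mu+1/2}\,dt\Big)
=C_{\mu}\,\|f\|_{L^{1}(sr)}\|g\|_{L^{1}(sr)}<\infty.
\]

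Finally, the finiteness of this non-negative double integral (Tonelli) makes Fubini's theorem applicable to the corresponding signed double integral, so I may interchange the order of integration; since the kernel $\prod_{i=1}^{n}\sqrt{x_it_i}J_{\mu_i}(x_it_i)$ is symmetric under the exchange $x\leftrightarrow t$, this gives
\[
\int_{\Rnm}\nHZ_{\mu}f(t)\,g(t)\,dt
=\int_{\Rnm}f(x)\Big(\int_{\Rnm}g(t)\prod_{i=1}^{n}\sqrt{x_it_i}J_{\mu_i}(x_it_i)\,dt\Big)dx
=\int_{\Rnm}f(x)\,\nHZ_{\mu}g(x)\,dx,
\]
which is the assertion. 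There is no genuine obstacle beyond the bookkeeping above; the only step needing care is the domination, and it is handled entirely by the uniform bound \eqref{cota Bessel} for $z^{-\mu_i}J_{\mu_i}(z)$, valid since $\mu_i>-\frac{1}{2}$. This proof runs in complete parallel to that of Lemma~\ref{Cap3-Prop3} for the transform $\nHH_{\mu}$.
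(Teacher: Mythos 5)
Your proof is correct: the uniform bound \eqref{cota Bessel} gives the domination $\prod_{i=1}^{n}|\sqrt{x_it_i}J_{\mu_i}(x_it_i)|\leq C_{\mu}^{-1}x^{\mu+1/2}t^{\mu+1/2}$, Tonelli then justifies Fubini, and the symmetry of the kernel in $x$ and $t$ yields the identity, exactly the standard argument the paper has in mind (it states this lemma without proof, treating it as routine, in parallel with Lemma~\ref{Cap3-Prop3}). The only blemish is a harmless misattribution: the pointwise well-definedness of $\nHZ_{\mu}f$ for $f\in L^{1}(\Rnm,sr)$ is the remark following Lemma~\ref{lema3.1}, not Lemma~\ref{lema3.2}.
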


\begin{proof}[Proof of Lema \ref{lema3.11}]\label{proof-lema3.11} Let $\phi\in\mathcal{H}_{\mu}$.

\begin{proof}[(i)]
It follows easily from induction on $m$.
\phantom{\qedhere}
\end{proof}

\begin{proof}[(ii)]
The proof of this result follows for induction on $m$. First let us observe that
\begin{align*}
(\nHZ_{\mu}(\|y\|^{2}+\lambda)^{-1}\nHZ_{\mu})&(-\nBZ+\lambda)\phi
=
\nHZ_{\mu}(\|y\|^{2}+\lambda)^{-1}[-\nHZ_{\mu}\nBZ\phi+\lambda\nHZ_{\mu}\phi]\\
&=
\nHZ_{\mu}(\|y\|^{2}+\lambda)^{-1}[-(-\|y\|^{2}\nHZ_{\mu}\phi)+\lambda\nHZ_{\mu}\phi]\\
&=
\nHZ_{\mu}(\|y\|^{2}+\lambda)^{-1}(\|y\|^{2}+\lambda)\nHZ_{\mu}\phi\\
&=
\nHZ_{\mu}(\nHZ_{\mu}\phi)=\phi
\end{align*}
So,
\[(\nHZ_{\mu}(\|y\|^{2}+\lambda)^{-1}\nHZ_{\mu})(-\nBZ+\lambda)=Id.\]
On the other hand
\begin{align*}
(-\nBZ+\lambda)&(\nHZ_{\mu}(\|y\|^{2}+\lambda)^{-1}\nHZ_{\mu})\phi
=
-\nBZ\nHZ_{\mu}(\|y\|^{2}+\lambda)^{-1}\nHZ_{\mu}\phi+
\lambda(\nHZ_{\mu}(\|y\|^{2}+\lambda)^{-1}\nHZ_{\mu})\phi\\
&=
-\nHZ_{\mu}[-\|y\|^{2}(\|y\|^{2}+\lambda)^{-1}\nHZ_{\mu}\phi]+
\nHZ_{\mu}[\lambda(\|y\|^{2}+\lambda)^{-1}\nHZ_{\mu}\phi]\\
&=
\nHZ_{\mu}[\|y\|^{2}(\|y\|^{2}+\lambda)^{-1}\nHZ_{\mu}\phi+\lambda(\|y\|^{2}+\lambda)^{-1}\nHZ_{\mu}\phi]\\
&=
\nHZ_{\mu}(\|y\|^{2}+\lambda)(\|y\|^{2}+\lambda)^{-1}\nHZ_{\mu}\phi\\
&=
\nHZ_{\mu}(\nHZ_{\mu}\phi)=\phi
\end{align*}

\noindent So,
\[\nHZ_{\mu}(\|y\|^{2}+\lambda)^{-1}\nHZ_{\mu}=(-\nBZ+\lambda)^{-1},\]
then
\[\nHZ_{\mu}(-\nBZ+\lambda)^{-1}=
(\|y\|^{2}+\lambda)^{-1}\nHZ_{\mu}.\]

Now, suppose that $\nHZ_{\mu} (-\nBZ+\lambda)^{-m} = (\lambda+\|y\|^{2})^{-m}\nHZ_{\mu}$,

\begin{align*}
\nHZ_{\mu}(-\nBZ+\lambda)^{-(m+1)} \phi
&=
\nHZ_{\mu}(-\nBZ+\lambda)^{-m}(-\nBZ+\lambda)^{-1}\phi\\
&=
(\|y\|^{2}+\lambda)^{-m}\nHZ_{\mu}(-\nBZ+\lambda)^{-1}\phi\\
&=
(\|y\|^{2}+\lambda)^{-m}(\|y\|^{2}+\lambda)^{-1}\nHZ_{\mu}\phi\\
&=
(\|y\|^{2}+\lambda)^{-(m+1)}\nHZ_{\mu}\phi
\end{align*}
\phantom{\qedhere}
\end{proof}

\begin{proof}[(iii)]
 By induction on $m$,
\begin{align*}
\nHZ_{\mu}[-\nBZ(-\nBZ+\lambda)^{-1}]\phi
&=
-\nHZ_{\mu}\nBZ[(-\nBZ+\lambda)^{-1}\phi]\\
&=
(-1)[-\|y\|^{2}\nHZ_{\mu}(-\nBZ+\lambda)^{-1}\phi]\\
&=
\|y\|^{2}\nHZ_{\mu}(-\nBZ+\lambda)^{-1}\phi\\
&=
\|y\|^{2}(\|y\|^{2}+\lambda)^{-1}\nHZ_{\mu}\phi
\end{align*}

\noindent Suppose that  $\nHZ_{\mu}(-\nBZ(-\nBZ+\lambda)^{-1})^{m}=
\|y\|^{2m}(\lambda+\|y\|^{2})^{-m}\nHZ_{\mu}$.
\begin{align*}
\nHZ_{\mu}&[-\nBZ(-\nBZ+\lambda)^{-1}]^{m+1}\phi
=
\nHZ_{\mu}[-\nBZ(-\nBZ+\lambda)^{-1}][-\nBZ(-\nBZ+\lambda)^{-1}]^{m}\phi\\
&=
\|y\|^{2}(\|y\|^{2}+\lambda)^{-1}
\nHZ_{\mu}([-\nBZ(-\nBZ+\lambda)^{-1}]^{m}\phi)\\
&=\|y\|^{2}(\|y\|^{2}+\lambda)^{-1}
\|y\|^{2m}(\|y\|^{2}+\lambda)^{-m}\nHZ_{\mu}\phi\\
&=\|y\|^{2(m+1)}(\|y\|^{2}+\lambda)^{-(m+1)}\nHZ_{\mu}\phi
\end{align*}
\phantom{\qedhere}
\end{proof}

\noindent The equalities for the case $u\in\mathcal{H'}_{\mu}$ are followed by transposition.
\end{proof}

\section{Similarity of Bessel operators}
\begin{remark}\label{proof similaridad Op de Bessel}
The operators $\nBH$ and $\nBZ$ given by \eqref{nOBH} and \eqref{nOBZ} respectively are related through
\begin{equation}
\nBZ=x^{\mu+1/2}\nBH x^{-\mu-1/2} .
\end{equation}
\end{remark}

\bigskip
\noindent Let $\phi\in C^{2}(\Rnm)$, then

\begin{align*}
&x^{-\mu-1/2}\nBZ x^{\mu+1/2}\phi(x)=
x^{-\mu-1/2}\left(
\sum_{i=1}^{n}
\frac{\partial^{2}}{\partial x_{i}^{2}}-
\frac{4\mu_{i}^{2}-1}{4x_{i}^{2}}
\right) x^{\mu+1/2} \phi(x)\\
&=
\sum_{i=1}^{n}
x^{-\mu-1/2}\frac{\partial^{2}}{\partial x_{i}^{2}} x^{\mu+1/2}\phi(x)-
\sum_{i=1}^{n}
\frac{4\mu_{i}^{2}-1}{4x_{i}^{2}}\phi(x)
\end{align*}

\noindent and also

\begin{equation*}
\begin{split}
&x^{-\mu-1/2}\frac{\partial^{2}}{\partial x_{i}^{2}}
\{ x^{\mu+1/2}\phi(x)\}
= x^{-\mu-1/2}x^{\mu+1/2}x_{i}^{-\mu_{i}+1/2}\frac{\partial^{2}}{\partial x_{i}^{2}}
\{x_{i}^{\mu_{i}+1/2}\phi(x)\}
=x_{i}^{-\mu_{i}-1/2}\frac{\partial^{2}}{\partial x_{i}^{2}}
\{ x_{i}^{\mu_{i}+1/2}\phi(x)\}\\
&\quad = x_{i}^{-\mu_{i}-1/2}\frac{\partial}{\partial x_{i}}
\{(\mu_{i}+1/2)\: x_{i}^{\mu_{i}-1/2}\phi(x)+
x_{i}^{\mu_{i}+1/2}\frac{\partial}{\partial x_{i}}\phi(x)\}\\
&\quad = x_{i}^{-\mu_{i}-1/2}
\{(\mu_{i}+1/2)(\mu_{i}-1/2)\: x_{i}^{\mu_{i}-3/2}\phi(x)+
(\mu_{i}+1/2)\: x_{i}^{\mu_{i}-1/2}\frac{\partial}{\partial x_{i}}\phi(x)+\\
&
\qquad+(\mu_{i}+1/2)\: x_{i}^{\mu_{i}-1/2}\frac{\partial}{\partial x_{i}}\phi(x)+
x_{i}^{\mu_{i}+1/2}\frac{\partial^{2}}{\partial x_{i}^{2}}\phi(x)
\}\\
&\quad =(\mu_{i}^{2}-1/4)\:x_{i}^{-2}\phi(x)+
2(\mu_{i}+1/2)\:x_{i}^{-1}\frac{\partial}{\partial x_{i}}\phi(x)+
\frac{\partial^{2}}{\partial x_{i}^{2}}\phi(x),
\end{split}
\end{equation*}

\noindent from where

\begin{align*}
&x^{-\mu-1/2}\nBZ x^{\mu+1/2}\phi(x)=
\sum_{i=1}^{n}
x^{-\mu-1/2}\frac{\partial^{2}}{\partial x_{i}^{2}} x^{\mu+1/2}\phi(x)-
\sum_{i=1}^{n}
\frac{4\mu_{i}^{2}-1}{4x_{i}^{2}}\phi(x)\\
& \quad =
\sum_{i=1}^{n}
(\mu_{i}^{2}-1/4)\:x_{i}^{-2}\phi(x)+
2(\mu_{i}+1/2)\:x_{i}^{-1}\frac{\partial}{\partial x_{i}}\phi(x)+
\frac{\partial^{2}}{\partial x_{i}^{2}}\phi(x)-
\sum_{i=1}^{n}
\frac{4\mu_{i}^{2}-1}{4x_{i}^{2}}\phi(x)\\
& \quad =
\sum_{i=1}^{n}
2(\mu_{i}+1/2)\:x_{i}^{-1}\frac{\partial}{\partial x_{i}}\phi(x)+
\frac{\partial^{2}}{\partial x_{i}^{2}}\phi(x)=\nBH\phi(x)
\end{align*}

%\bibliographystyle{siam}
%\bibliography{biblio}

\end{document}